\newtheorem{theorem}{Theorem}[section]
\newtheorem{corollary}[theorem]{Corollary}
\newtheorem{proposition}[theorem]{Proposition}
\newtheorem{lemma}[theorem]{Lemma}
\theoremstyle{definition}
\theoremstyle{remark}
\newtheorem{remark}[theorem]{Remark}
\numberwithin{equation}{section}
\DeclareMathOperator{\Pic}{Pic}
\DeclareMathOperator{\Grass}{Grass}
\DeclareMathOperator{\Bs}{Bs}
\DeclareMathOperator{\Ram}{Ram}
\DeclareMathOperator{\Image}{Image}
\title[Prym-Torelli theorem for double coverings of elliptic curves]
{Global Prym-Torelli theorem for double coverings of elliptic curves}
\author{Atsushi Ikeda}
\subjclass[2010]{14H40, 14C34}
\address{Department of Mathematics, School of Engineering,
Tokyo Denki University, Adachi-ku,
Tokyo 120-8551, Japan}
\email{atsushi@mail.dendai.ac.jp}
\begin{document}
\begin{abstract}
 The Prym variety for a branched double covering of a nonsingular projective curve is defined as a polarized abelian variety.
 We prove that any double covering of an elliptic curve which has more than $4$ branch points is recovered from its Prym variety.
\end{abstract}
\maketitle
\section{Introduction}
Let $C$ and $C'$ be nonsingular projective curves, and let $\phi:C\rightarrow{C'}$ be a double covering branched at $2n$ points.
In \cite{Mu2} the Prym variety $P(\phi)$ for the double covering $\phi$ is defined as a polarized abelian variety of dimension $d=g'-1+n$, where $g'$ is the genus of $C'$.
Let $\mathcal{R}=\mathcal{R}_{g',2n}$ be the moduli space of such coverings, and let $\mathcal{A}=\mathcal{A}_{d}$ be the moduli space of polarized abelian varieties of dimension $d$.
Then the construction of the Prym variety defines the Prym map $P:\mathcal{R}\rightarrow\mathcal{A}$, and the Prym-Torelli problem asks whether the Prym map is injective.
If $g'=0$, then it is injective by the classical Torelli theorem for hyperelliptic curves.
We consider the case $g'>0$ and $\dim{\mathcal{R}}\leq\dim{\mathcal{A}}$, where we note that
$\dim{\mathcal{R}}=3g'-3+2n$ and $\dim{\mathcal{A}}=\frac{(g'-1+n)(g'+n)}{2}$.
The generically injectivity for the Prym map has been proved in most cases.
\begin{theorem}\label{162031_29Nov18}
 The Prym map is generically injective in the following cases;
 \begin{enumerate}
  \item \label{120124_5Dec18}
	{\rm (Friedman and Smith \cite{FS}, Kanev \cite{K})}\
	$n=0$ and $\dim{\mathcal{R}}<\dim{\mathcal{A}}$,
  \item \label{162058_29Nov18}
        {\rm (Marcucci and Pirola \cite{MP})}\
	$g'>1$, $n>0$ and $\dim{\mathcal{R}}<\dim{\mathcal{A}}-1$,
  \item \label{114811_18Dec18}
	{\rm (Naranjo and Ortega \cite{NO})}
	$g'>1$, $n>0$ and $\dim{\mathcal{R}}=\dim{\mathcal{A}}-1$,
  \item \label{164128_29Nov18}
	{\rm (Marcucci and Naranjo \cite{MN})}\
	$g'=1$, $n>0$ and $\dim{\mathcal{R}}\leq\dim{\mathcal{A}}$.
 \end{enumerate}
\end{theorem}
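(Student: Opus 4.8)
The plan is to reduce the generic-injectivity statement to two separate problems: an \emph{infinitesimal} one, showing that the Prym map $P$ is generically an immersion, and a \emph{global} one, showing that a general fibre of $P$ is a single point. The four regimes in the statement differ only in how these two problems are resolved, so I would set up a common framework and then confront the case-specific difficulties.

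For the infinitesimal part, I would compute the codifferential of $P$. The anti-invariant holomorphic differentials on $C$ descend to sections of $\omega_{C'}\otimes\eta$, where $\eta$ is the line bundle defining the covering, with $\eta^{\otimes2}=\mathcal{O}_{C'}(B)$ for the branch divisor $B$ of degree $2n$; one checks $h^0(\omega_{C'}\otimes\eta)=g'-1+n=d$. The cotangent space to $\mathcal{A}$ at $P(\phi)$ is $\mathrm{Sym}^2H^0(\omega_{C'}\otimes\eta)$, of dimension $\binom{d+1}{2}=\dim\mathcal{A}$; the cotangent space to $\mathcal{R}$ at $[\phi]$ is $H^0(\omega_{C'}^{\otimes2}\otimes\eta^{\otimes2})$, of dimension $3g'-3+2n=\dim\mathcal{R}$; and the codifferential is the multiplication map
\[
 \mu:\mathrm{Sym}^2H^0(\omega_{C'}\otimes\eta)\longrightarrow H^0(\omega_{C'}^{\otimes2}\otimes\eta^{\otimes2}).
\]
Thus $P$ is an immersion at $[\phi]$ exactly when $\mu$ is surjective, and I would establish this surjectivity for generic covering data by a base-point-free pencil trick combined with an $H^1$-vanishing (Castelnuovo-type) argument for the Prym-canonical map. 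The hypothesis $\dim\mathcal{R}\le\dim\mathcal{A}$ furnishes the numerology $\binom{d+1}{2}\ge\dim\mathcal{R}$ required for such a map to be generically surjective, and genericity is used to avoid the special loci where multiplication degenerates.

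For the global part, once $P$ is a generic immersion its general fibres are finite, and I would show they are singletons by reconstructing the covering from the geometry of the polarized Prym variety $P(\phi)$, via its theta divisor and the associated Gauss map, or equivalently from the Abel--Prym image of $C$ inside $P(\phi)$, which recovers $C$ together with the involution and hence $C'$ and $\phi$. An alternative route is degeneration: extend $P$ to a morphism on a compactification $\overline{\mathcal{R}}$ of admissible coverings, degenerate a general covering to a boundary point where the Prym variety becomes a Jacobian or a product of lower-dimensional pieces, invoke the classical Torelli theorem and induction on the lower strata, and deduce by specialization that the generic degree of $P$ onto its image is $1$.

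The hardest step will be the global part in the two boundary regimes, cases~(\ref{114811_18Dec18}) and~(\ref{164128_29Nov18}). When $\dim\mathcal{R}=\dim\mathcal{A}-1$ the image is a hypersurface and the dimension count leaves no slack, so ruling out that $P$ is generically multi-sheeted demands a delicate analysis of the theta divisor rather than a bare tangent-space computation, which is the content of Naranjo--Ortega. When $g'=1$ the invariant part $\phi^*H^0(\omega_{C'})$ is only one-dimensional, $\mu$ reduces to the multiplication $\mathrm{Sym}^2H^0(\eta)\to H^0(\eta^{\otimes2})$ on an elliptic curve (surjective exactly for $n\ge3$, where elliptic normal curves are projectively normal, in agreement with the constraint $\dim\mathcal{R}\le\dim\mathcal{A}$), and the base carries nontrivial automorphisms; here I would exploit the explicit abelian-variety-with-endomorphisms structure, following Marcucci--Naranjo, to complete the reconstruction. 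By contrast, the strict-inequality cases~(\ref{120124_5Dec18}) and~(\ref{162058_29Nov18}) follow comparatively smoothly once the generic surjectivity of $\mu$ and the Gauss-map reconstruction are in place.
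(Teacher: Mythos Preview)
This theorem is not proved in the paper at all: it is a survey statement, with each of the four cases attributed to the cited references (Friedman--Smith, Kanev, Marcucci--Pirola, Naranjo--Ortega, Marcucci--Naranjo). The paper's own contribution is the \emph{global} injectivity for $g'=1$ (Theorem~\ref{145827_29Nov18}), and all of Sections~\ref{155149_28Oct18}--\ref{150759_29Nov18} are devoted to that. So there is no ``paper's own proof'' of Theorem~\ref{162031_29Nov18} to compare your proposal against.

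As a sketch of what the cited papers actually do, your outline is in the right spirit but conflates several distinct strategies and glosses over the genuinely hard step. The codifferential description via the multiplication map
\[
\mu:\mathrm{Sym}^2H^0(C',\omega_{C'}\otimes\eta)\longrightarrow H^0(C',\omega_{C'}^{\otimes2}\otimes\eta^{\otimes2})
\]
is correct and is indeed the starting point of \cite{MP} and \cite{NO}. But surjectivity of $\mu$ (i.e.\ infinitesimal Torelli) only gives generic \emph{finiteness}; passing to generic degree one is the crux, and your two proposed mechanisms both have problems. The ``Gauss-map reconstruction'' you invoke cannot work as stated in case~\eqref{120124_5Dec18}: for $n=0$ the Prym map is \emph{not} injective (Donagi's tetragonal construction, \cite{D}), so no reconstruction of $\phi$ from $(P,\lambda_P)$ alone is possible; Friedman--Smith \cite{FS} instead argue by degeneration to the boundary of $\overline{\mathcal{R}}$ and a monodromy/irreducibility argument, which is closer to your ``alternative route'' but requires substantial work you have not indicated. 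Likewise in cases~\eqref{162058_29Nov18}--\eqref{114811_18Dec18} the argument is not a direct reconstruction but rather a study of when $\mu$ fails to be injective (the Prym-canonical map and quadrics through its image), combined with a case analysis of the possible base loci; and in case~\eqref{164128_29Nov18} Marcucci--Naranjo use a degeneration argument rather than the endomorphism structure you suggest. In short, your plan identifies the right infinitesimal object but underestimates the global step, which in each case is resolved by a technique specific to that regime rather than by a uniform Gauss-map argument.
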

The Prym varieties for unramified coverings have been intensively studied because they are principally polarized abelian varieties.
For ramified coverings, Nagaraj and Ramanan \cite{NR} proved the above Theorem~\ref{162031_29Nov18}~\eqref{162058_29Nov18} for $n=2$, and then Marcucci and Pirola \cite{MP} proved it for any $n>0$.
When $g'>1$ and $\dim{\mathcal{R}}=\dim{\mathcal{A}}$, there are only two cases $(g',n)=(6,0),(3,2)$.
If $(g',n)=(6,0)$ then the Prym map is generically finite of degree $27$ (\cite{DS}), and if $(g',n)=(3,2)$ then it is generically finite of degree $3$ (\cite{NR}, \cite{BCV}).\par
Although the Prym map is not injective for many cases in Theorem~\ref{162031_29Nov18} (\cite{D}, \cite{N}, \cite{NR}, \cite{V}),
we prove the injectivity when $g'=1$.
The following is the main result of this paper, which improves Theorem~\ref{162031_29Nov18}\,\eqref{164128_29Nov18}.
\begin{theorem}[Theorem~\ref{155025_28Oct18}]\label{145827_29Nov18}
 If $g'=1$, $n>0$ and $\dim{\mathcal{R}}\leq\dim{\mathcal{A}}$, then the Prym map is injective.
\end{theorem}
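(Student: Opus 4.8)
The plan is to recover the covering $\phi$ directly from the geometry of the Abel--Prym curve inside $P$, with the hypothesis $n\ge 3$ (equivalently, more than $4$ branch points) entering twice in an essential way. Write $E=C'$, let $\sigma$ be the covering involution and $\eta$ the line bundle on $E$ with $\eta^{2}=\mathcal O_E(B)$, $B$ the branch divisor, and let $\psi\colon C\to P$, $\psi(x)=[\mathcal O_C(x-\sigma x)]$, be the Abel--Prym map; its image lies in $P=\Ker(\operatorname{Nm})=\Image(1-\sigma)$, and every point of $\Ram(\phi)$ is sent to $0$. Since $g(C)=n+1\ge 4$, the Castelnuovo--Severi inequality forbids $C$ from being simultaneously hyperelliptic and a double cover of an elliptic curve, so $C$ is non-hyperelliptic; this forces $\psi$ to be injective away from $\Ram(\phi)$, hence birational onto its image, with normalisation $C$ and a single point of multiplicity $2n$ at the origin. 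The derivative of $\psi$ (which is nowhere zero) yields a morphism $C\to\mathbb P(T_0P)=\mathbb P^{n-1}$, namely the Gauss map of $\psi(C)$ lifted to $C$; this is exactly the map given by the complete linear system $\lvert H^0(C,\omega_C)^{-}\rvert$. A pushforward computation, $\phi_*\omega_C=\omega_E\oplus(\omega_E\otimes\eta)$ with $\omega_E=\mathcal O_E$, identifies $H^0(C,\omega_C)^{-}$ with $\phi^{*}H^0(E,\eta)$, so the morphism factors as $C\xrightarrow{\ \phi\ }E\xrightarrow{\ \lvert\eta\rvert\ }\mathbb P^{n-1}$, and since $\deg\eta=n\ge 3$ the map $\lvert\eta\rvert$ embeds $E$ as an elliptic normal curve of degree $n$. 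Consequently the Gauss image of $\psi(C)\subset P$, computed intrinsically, is an elliptic normal curve projectively equivalent to $E$, and reading the Gauss map as a map onto its image returns the covering $\phi\colon C\to E$ together with its branch divisor $B$ --- that is, the point of $\mathcal R$.

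\textbf{Reconstructing $\psi(C)$ from $(P,\Xi)$.} It remains to recover the curve $\psi(C)\subset P$, up to a translation and the automorphism $-1$ of $P$ --- an ambiguity under which the entire construction above is equivariant, and in fact for translations and for $-1$ the Gauss map and its image are literally unchanged --- from the polarised abelian variety $(P,\Xi)$ alone, where $\Xi$ has type $(1,\dots,1,2)$ and hence $h^0(P,\Xi)=2$. This is the heart of the matter. I would first compute the cohomology class $[\psi(C)]\in H^{2n-2}(P,\mathbb Z)$, and then exploit the fact that $\lvert\Xi\rvert$ is a pencil: the base locus $\Bs\lvert\Xi\rvert$, the singular loci of the members of the pencil and their Gauss maps, and the distinguished point $0\in P$ are all intrinsic, and I would show that together they pin down $\psi(C)$. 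Concretely, one would either realise $\psi(C)$ as a specified irreducible component of an explicit theta-theoretic locus built from $\lvert\Xi\rvert$, or classify all reduced irreducible curves $Y\subset P$ satisfying the constraints that $\psi(C)$ necessarily satisfies --- symmetry under $-1$, passing through $0$ with multiplicity $2n$, having the computed class, and lying in a one-parameter family of members of $\lvert\Xi\rvert$ --- and verify that every such $Y$ produces, through the construction of the first paragraph, covering data isomorphic to the original one. An equivalent formulation is to reconstruct $JC$ first: complementarity of $P$ and $\phi^{*}E$ inside $JC$ gives $JC\cong (E\times P)/E[2]$ with the gluing an isomorphism $E[2]\xrightarrow{\sim}\Ker\lambda_\Xi\cong(\mathbb Z/2\mathbb Z)^{2}$ and an exact sequence $0\to P\to JC\xrightarrow{\operatorname{Nm}}E\to 0$, so one must single out the right principally polarised Jacobian among those containing $P$ with elliptic quotient and restricting to a member of $\lvert\Xi\rvert$ on $P$, and then apply Torelli to $C$; but since $E$ is not given, this route carries the same essential difficulty.

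\textbf{Main obstacle and remarks.} The real obstacle is precisely the reconstruction of $\psi(C)$ from $(P,\Xi)$ in the second paragraph: infinitesimal and generic versions are available from \cite{MN}, and the task here is to upgrade them to a statement controlling \emph{all} curves with the relevant numerical and theta-theoretic properties, where the pencil structure of $\lvert\Xi\rvert$ and the multiplicity-$2n$ point at the origin must be used decisively. By contrast the two inputs that make the first paragraph work --- Castelnuovo--Severi (so $\psi$ is birational onto its image) and $\deg\eta\ge3$ (so the Gauss image is $E$ itself rather than a curve of lower genus) --- are exactly what the hypothesis $n\ge 3$ provides, and the harmlessness of the translation/$(-1)$ ambiguity is immediate; in particular the equality case $\dim\mathcal R=\dim\mathcal A$, which here means $n=3$ and $E\subset\mathbb P^{2}$ a plane cubic, requires no separate treatment in this approach.
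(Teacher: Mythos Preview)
Your first paragraph is correct and, in fact, closely related to what the paper ultimately proves: once one has the right object inside $P$, its Gauss image is the elliptic normal curve $\Phi_{|\eta|}(E)\subset\mathbf P^{n-1}$, and reading the map back recovers $(E,B,\eta)$. But your proposal is not a proof, because the second paragraph is a wish list rather than an argument. You yourself say that ``the real obstacle is precisely the reconstruction of $\psi(C)$ from $(P,\Xi)$,'' and you offer no mechanism for doing so. The constraints you list --- symmetry under $-1$, passing through $0$ with high multiplicity, lying in members of the pencil, having the correct cohomology class --- are necessary but you give no reason they are sufficient to characterise $\psi(C)$ up to translation and sign, nor any way to produce $\psi(C)$ constructively from $(P,\lambda_P)$. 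The alternative route via reconstructing $J(C)$ is, as you note, no easier: it presupposes knowledge of $E$.

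The paper sidesteps this obstacle entirely by never trying to find a \emph{curve} in $P$. Instead it works with objects that are manifestly intrinsic to $(P,\lambda_P)$: the pencil $|\mathcal L|$, its $(n-2)$-dimensional base locus $\Bs|\mathcal L|$, and the Gauss maps $\Psi_D$ of the members $D\in|\mathcal L|$ \emph{restricted to} $\Bs|\mathcal L|$. For every $D$ outside a finite set $\Pi_{\mathcal L}$ of six members (those fixed by a nonzero element of $K(P)$), the closure of $\Psi_D(\Bs|\mathcal L|\setminus D_{\mathrm{sing}})$ has a normalisation $X_D$ on which the branch locus decomposes canonically into $2n$ pieces $Z_{D,i}$, each spanning a unique hyperplane $H_{D,i}$. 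The crucial step --- the one replacing your missing reconstruction --- is an \emph{intrinsic} irreducibility criterion: the divisor $\nu_D^\ast H_{D,i}-Z_{D,i}$ on $X_D$ is irreducible for all $i$ precisely when $D$ comes from $\delta$ with $N(\delta)=\eta$, and this singles out four special members $\Pi'_{\mathcal L}$. For any such $D$, the dual variety $(X'_D)^\vee\subset(\mathbf P^{n-1})^\vee$ is exactly $\Phi_{|\eta|}(E)$, the points $H_{D,i}^\vee$ are the branch points, and $\mathcal O(1)|_{(X'_D)^\vee}$ is $\eta$. Thus the paper's argument and yours share the same endgame (Gauss image equals $E$ via $|\eta|$), but the paper replaces the unresolved ``find the Abel--Prym curve'' step with an explicit and computable selection procedure inside the pencil $|\mathcal L|$.
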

To prove this theorem we use the Gauss map for the polarization divisor, which is a standard approach to Torelli problems.
Let $\mathcal{L}$ be an ample invertible sheaf which represents the polarization of the Prym variety $P=P(\phi)$.
For a member $D\in|\mathcal{L}|$, we consider the Gauss map
$$
\Psi_{D}:\,
D\setminus{D_{\mathrm{sing}}}\longrightarrow\mathbf{P}^{d-1}=\Grass{(d-1,H^{0}(P,\Omega_{P}^{1})^{\vee})}.
$$
It is not difficult to show that there exists a member $D_{0}\in|\mathcal{L}|$ such that the branch divisor of $\Psi_{D_{0}}$ recovers the original covering $\phi:C\rightarrow{C'}$ in a similar way as Andreotti's proof \cite{A} of Torelli theorem for hyperelliptic curves.
The essential part of our proof is to distinguish the special member $D_{0}\in|\mathcal{L}|$.
We study the restriction
$\Psi_{D}\vert_{\Bs{|\mathcal{L}|}}:\,
\Bs{|\mathcal{L}|}\setminus{D_{\mathrm{sing}}}\rightarrow\mathbf{P}^{d-1}$
of the Gauss map to the base locus of the linear system $|\mathcal{L}|$.
Although $\Psi_{D}$ is difficult to compute, the restriction $\Psi_{D}\vert_{\Bs{|\mathcal{L}|}}$ is rather simple for any member $D\in|\mathcal{L}|$.
By using the image of $\Psi_{D}\vert_{\Bs{|\mathcal{L}|}}$ and the branch divisor of $\Psi_{D}\vert_{\Bs{|\mathcal{L}|}}$, we can specify the member $D_{0}\in|\mathcal{L}|$ which has the desired property.\par
In Section~\ref{155149_28Oct18}, we summarize some basic properties of bielliptic curves and their Prym varieties.
In Section~\ref{145716_29Nov18}, we explain the strategy of the proof of Theorem~\ref{145827_29Nov18} by using the key Propositions in Section~\ref{150759_29Nov18}.
In Section~\ref{154705_28Oct18}, we explicitly describe the base locus of the linear system of polarization divisors.
In Section~\ref{154736_28Oct18}, we show that the restricted Gauss map $\Psi_{D}\vert_{\Bs{|\mathcal{L}|}}$ is the same map as the restriction of the Gauss map for the theta divisor on Jacobian variety of $C$.
By giving a simple description for $\Psi_{D}\vert_{\Bs{|\mathcal{L}|}}$, we prove some properties on the branch divisor of $\Psi_{D}\vert_{\Bs{|\mathcal{L}|}}$.
In Section~\ref{150759_29Nov18}, we present key Propositions, which are consequences of the results in Section~\ref{154736_28Oct18}.\par
In this paper, we work over an algebraically closed field $k$ of characteristic $\neq{2}$.
\section{Properties of bielliptic curves and Prym varieties}\label{155149_28Oct18}
Let $C$ be a nonsingular projective curve of genus $g$ over $k$, and let $\sigma$ be an involution on $C$.
In this paper, we call the pair $(C,\sigma)$ a bielliptic curve of genus $g$, if $g>1$ and the quotient $E=C/\sigma$ is a nonsingular curve of genus $1$.
We denote by $\phi:C\rightarrow{E}$ the quotient morphism.
First we note the following.
\begin{lemma}[\cite{N} (3.3)]\label{123518_3Aug18}
 Let $(C,\sigma)$ be a bielliptic curve of genus $g$.
 If $g>3$, then $C$ is not a hyperelliptic curve.
\end{lemma}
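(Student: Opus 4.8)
\emph{Proof proposal.} The plan is to argue by contradiction: assume $C$ is hyperelliptic and deduce $g\leq 3$, which already contradicts $g>3$. Let $\tau$ be the hyperelliptic involution of $C$. Since $g>1$ the degree-$2$ pencil on $C$ is unique, so every automorphism of $C$ preserves it; hence $\tau$ is central in the automorphism group of $C$, and in particular $\sigma\tau=\tau\sigma$. Moreover $\sigma\neq\tau$, because $C/\tau\cong\mathbf{P}^{1}$ has genus $0$ while $C/\sigma=E$ has genus $1$, and neither involution is the identity. Therefore $\langle\sigma,\tau\rangle$ acts on $C$ as a Klein four-group.

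Next I would push $\sigma$ down to $\mathbf{P}^{1}=C/\tau$. Since $\sigma$ and $\tau$ commute, $\sigma$ descends to an involution $\bar\sigma$ of $\mathbf{P}^{1}$, and $\bar\sigma$ is not the identity: otherwise $\sigma$ would preserve every fibre $\{x,\tau x\}$ of $C\to C/\tau$, forcing $\sigma\in\{1,\tau\}$. Over an algebraically closed field of characteristic $\neq 2$ a nontrivial involution of $\mathbf{P}^{1}$ is diagonalizable, so $\bar\sigma$ has exactly two fixed points $q_{1},q_{2}$. Now apply Riemann--Hurwitz to $\phi\colon C\to E$: since $E$ has genus $1$ and a double covering in characteristic $\neq 2$ has only simple ramification, $2g-2$ equals the number of fixed points of $\sigma$ on $C$. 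But each fixed point $x$ of $\sigma$ maps to a point of $\mathbf{P}^{1}$ fixed by $\bar\sigma$, hence into $\{q_{1},q_{2}\}$, and the degree-$2$ morphism $C\to C/\tau$ has at most two points over each of $q_{1},q_{2}$. Thus $2g-2\leq 4$, i.e. $g\leq 3$, the desired contradiction.

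The step that needs the most care is the centrality of the hyperelliptic involution, i.e. that $\tau$ commutes with $\sigma$; this rests on the uniqueness of the $g^{1}_{2}$ for $g\geq 2$. An alternative route I would keep in reserve is representation-theoretic: the Klein four-group acts on $H^{0}(C,\Omega_{C}^{1})$, where $\tau$ acts as $-1$ and the $\sigma$-invariants are $\phi^{*}H^{0}(E,\Omega_{E}^{1})$ of dimension $1$, so $\sigma$ has a $1$-dimensional $(+1)$-eigenspace and a $(g-1)$-dimensional $(-1)$-eigenspace; comparing with the action of $\sigma$ on the canonical space $\mathbf{P}^{g-1}$, in which the canonical image of $C$ is a rational normal curve of degree $g-1$ and $\bar\sigma$ acts with alternating eigenvalues, one again gets $g\leq 3$. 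I expect the contradiction count to be sharp, which is consistent with the hypothesis $g>3$ rather than $g\geq 3$.
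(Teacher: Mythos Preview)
The paper does not give its own proof of this lemma; it is simply quoted with the citation \cite{N}~(3.3). So there is nothing in the text to compare your argument against.

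Your proof is correct and is essentially the classical argument. The only step worth one extra sentence is the claim that $\bar\sigma\neq\mathrm{id}$: if $\bar\sigma=\mathrm{id}$ then the closed loci $\{x:\sigma(x)=x\}$ and $\{x:\sigma(x)=\tau(x)\}$ cover the irreducible curve $C$, so one of them equals $C$; you say this implicitly but it is the point that rules out a ``mixed'' behaviour on fibres. Everything else---centrality of $\tau$, the Klein four-group, the two fixed points of $\bar\sigma$ on $\mathbf{P}^{1}$ in characteristic $\neq 2$, and the Riemann--Hurwitz count $2g-2=\sharp\mathrm{Fix}(\sigma)\leq 4$---is standard and correctly invoked. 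The bound is sharp, as you note: there do exist hyperelliptic bielliptic curves of genus $3$.

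An equivalent (and equally short) route, if you want a reference rather than a direct count, is the Castelnuovo--Severi inequality applied to the two degree-$2$ maps $C\to\mathbf{P}^{1}$ and $C\to E$; since $\sigma\neq\tau$ the product map $C\to\mathbf{P}^{1}\times E$ is birational onto its image, and the inequality gives $g\leq 2\cdot 0+2\cdot 1+(2-1)(2-1)=3$. Your fixed-point argument is in fact the proof of that inequality specialised to this case.
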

Let $N:J(C)\rightarrow{J(E)}$ be the norm map of $\phi$, which is a homomorphism on their Jacobian varieties.
\begin{lemma}[\cite{Mu2}]\label{172616_1Dec18}
 Let $\phi:C\rightarrow{E}$ be the covering defined from a bielliptic curve $(C,\sigma)$.
 \begin{enumerate}
  \item $\phi^{*}:\Pic^{0}{(E)}\rightarrow\Pic^{0}{(C)}$ is injective.
  \item The kernel $P$ of the norm map $N:J(C)\rightarrow{J(E)}$ is reduced and connected.
 \end{enumerate}
\end{lemma}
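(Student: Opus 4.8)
The plan is to extract both statements from the $(\pm1)$-eigensheaf decomposition of $\phi_{*}\mathcal{O}_{C}$ together with the duality between $\phi^{*}$ and the norm map. Since $g>1$, Riemann--Hurwitz shows that $\phi$ is ramified, with branch divisor $B\subset E$ of degree $\deg B=2g-2>0$; and since $\operatorname{char}k\neq 2$, the involution $\sigma$ splits $\phi_{*}\mathcal{O}_{C}$ into its $(\pm1)$-eigensheaves, giving
\[
\phi_{*}\mathcal{O}_{C}\;\cong\;\mathcal{O}_{E}\oplus\mathcal{L}^{-1},\qquad \mathcal{L}^{\otimes 2}\cong\mathcal{O}_{E}(B),\quad \deg\mathcal{L}=g-1>0 .
\]

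For $(1)$, I would suppose $\xi\in\Pic^{0}(E)$ satisfies $\phi^{*}\xi\cong\mathcal{O}_{C}$ and apply the projection formula: $\xi\otimes\phi_{*}\mathcal{O}_{C}\cong\phi_{*}\mathcal{O}_{C}$, that is, $\xi\oplus(\xi\otimes\mathcal{L}^{-1})\cong\mathcal{O}_{E}\oplus\mathcal{L}^{-1}$. Counting global sections, the right-hand side contributes $h^{0}=1$ because $\deg\mathcal{L}^{-1}<0$, whereas on the left $h^{0}(\xi\otimes\mathcal{L}^{-1})=0$ and $h^{0}(\xi)=0$ unless $\xi\cong\mathcal{O}_{E}$; hence $\xi\cong\mathcal{O}_{E}$. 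Thus $\ker\phi^{*}$ has no nontrivial $k$-point; and since $N\circ\phi^{*}=[2]_{J(E)}$, the group scheme $\ker\phi^{*}$ is a closed subgroup scheme of $J(E)[2]$, which is étale because $\operatorname{char}k\neq 2$. Therefore $\ker\phi^{*}$ is reduced, hence trivial, and $\phi^{*}$ is a closed immersion of abelian varieties.

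For $(2)$, note first that $N$ is surjective because $N\circ\phi^{*}=[2]_{J(E)}$ is. Under the canonical principal polarizations of $J(C)$ and $J(E)$, the norm map $N$ is identified with the dual homomorphism of $\phi^{*}$. By $(1)$ there is an exact sequence of abelian varieties $0\to J(E)\xrightarrow{\phi^{*}}J(C)\to Q\to 0$ with $Q:=J(C)/\phi^{*}J(E)$ of dimension $g-1$; dualizing and invoking autoduality of Jacobians identifies $N$ with the quotient map $J(C)\to\widehat Q$, so $P=\ker N\cong\widehat Q$ is an abelian variety, in particular reduced and connected.

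I expect the connectedness in $(2)$ to be the only delicate point, and it is exactly where the ramification of $\phi$ enters: for an unramified double cover $\phi^{*}$ fails to be injective and $\ker N$ acquires a second component, Mumford's Prym being only its identity component. Concretely, one always has $A_{-}:=\operatorname{im}(1-\sigma^{*})\subseteq\ker(1+\sigma^{*})$, which by $(1)$ equals $\ker N$, with $\dim A_{-}=g-1$ and, at the level of points, $\ker N=A_{-}+\phi^{*}(J(E)[2])$; so the heart of the matter is ruling out nontrivial translates of $A_{-}$ by classes pulled back from $J(E)[2]$. Going through the duality $N=\widehat{\phi^{*}}$ and statement $(1)$, as above, is the most economical way to do this and avoids a direct computation with $2$-torsion.
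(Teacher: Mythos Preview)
The paper does not give its own proof of this lemma: it is stated with a bare citation to Mumford~\cite{Mu2}. So there is no in-paper argument to compare against, and your proposal functions as a self-contained justification of a result the paper takes as known.

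Your argument is correct. For part~(1), the projection formula combined with the eigensheaf decomposition $\phi_{*}\mathcal{O}_{C}\cong\mathcal{O}_{E}\oplus\mathcal{L}^{-1}$ with $\deg\mathcal{L}=g-1>0$ is exactly the right mechanism, and your passage from ``no nontrivial $k$-point in $\ker\phi^{*}$'' to ``$\ker\phi^{*}$ is the trivial group scheme'' via $\ker\phi^{*}\subset J(E)[2]$ (\'etale in characteristic $\neq 2$) is the clean way to upgrade set-theoretic injectivity to a closed immersion. For part~(2), the duality argument is valid: exactness of the dual of a short exact sequence of abelian varieties together with the standard identification $N=\lambda_{J(E)}^{-1}\circ\widehat{\phi^{*}}\circ\lambda_{J(C)}$ yields $\ker N\cong\widehat{Q}$, an abelian variety.

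One small slip of phrasing: you write that autoduality ``identifies $N$ with the quotient map $J(C)\to\widehat{Q}$'', but $N$ has target $J(E)$, not $\widehat{Q}$. What you mean is that $N$ is identified with the surjection $\widehat{J(C)}\to\widehat{J(E)}$ in the dual sequence, whose kernel is $\widehat{Q}$; the conclusion $P\cong\widehat{Q}$ then follows. It would also be worth naming a reference for the exactness of the dual sequence (e.g.\ Mumford's \emph{Abelian Varieties} or Milne's notes), since that is the step carrying the weight in~(2). Your closing paragraph correctly identifies where ramification enters and why the \'etale case behaves differently, which matches Mumford's dichotomy in~\cite{Mu2}.
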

By Lemma~\ref{172616_1Dec18}, the kernel $P$ of the norm map $N$ is an abelian variety of dimension $n=g-1$.
Let $P^{\vee}$ be the dual abelian variety of $P$, and let $\lambda_{P}:P\rightarrow{P^{\vee}}$ be the polarization isogeny which is defined as the restriction of the principal polarization on the Jacobian variety $J(C)$.
Then the polarized abelian variety $(P,\lambda_{P})$ is called the Prym variety for the covering $\phi:C\rightarrow{E}$.
We denote by $K(P)\subset{P}$ the kernel of the polarization $\lambda_{P}:P\rightarrow{P^{\vee}}$.
An ample invertible sheaf $\mathcal{L}$ on $P$ represents the the polarization $\lambda_{P}$, if the polarization isogeny $\lambda_{P}$ is given by
$$
\lambda_{P}:\,P(k)\longrightarrow{P^{\vee}(k)=\Pic^{0}{(P)}};\,
x\longmapsto{t_{x}^{*}\mathcal{L}\otimes\mathcal{L}^{\vee}},
$$
where $t_{x}:P\rightarrow{P}$ denotes the translation by $x\in{P(k)}$.
\begin{lemma}[\cite{Mu2}]\label{172812_1Dec18}
 Let $(P,\lambda_{P})$ be the Prym variety defined from a bielliptic curve $(C,\sigma)$, and let $\mathcal{L}$ be an ample invertible sheaf which represents $\lambda_{P}$.
 \begin{enumerate}
  \item $K(P)=\phi^{*}J(E)_{2}\subset{J(C)}$,
 where $J(E)_{2}$ denotes the set of points of order $2$ on $J(E)$.
  \item $\deg{\lambda_{P}}=4$ and $h^{0}(P,\mathcal{L})=2$.
 \end{enumerate}
\end{lemma}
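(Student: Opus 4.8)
The plan is to derive both assertions formally from the short exact sequence attached to the norm map together with the self-duality of Jacobian varieties; only the structural facts recorded in Lemma~\ref{172616_1Dec18} are used, not the geometry of the branch locus.

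First I would record the elementary facts. Since $\deg\phi=2$ we have $N\circ\phi^{*}=[2]_{J(E)}$, and by Lemma~\ref{172616_1Dec18} the homomorphism $\phi^{*}\colon J(E)\to J(C)$ is injective, $P=\Ker N$ is connected, and $N$ is surjective (since $[2]_{J(E)}$ is). Hence there is a short exact sequence
\[
0\longrightarrow P\stackrel{\iota}{\longrightarrow}J(C)\stackrel{N}{\longrightarrow}J(E)\longrightarrow0 .
\]
Moreover $\phi^{*}J(E)_{2}\subseteq P$, because $N(\phi^{*}a)=2a=0$ for $a\in J(E)_{2}$; conversely $\phi^{*}a\in\Ker N$ forces $a\in J(E)_{2}$, so $\phi^{*}J(E)\cap P=\phi^{*}J(E)_{2}$. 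As $\phi^{*}$ is injective and $E$ is an elliptic curve over a field of characteristic $\neq2$, this intersection is a subgroup of $P$ of order $4$.

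The key input is the classical adjointness between $\phi^{*}$ and $N$: with respect to the canonical principal polarizations $\theta_{C}\colon J(C)\stackrel{\sim}{\longrightarrow}J(C)^{\vee}$ and $\theta_{E}\colon J(E)\stackrel{\sim}{\longrightarrow}J(E)^{\vee}$, the norm map $N$ is the dual homomorphism of $\phi^{*}$, i.e.\ $\theta_{C}\circ\phi^{*}=\widehat{N}\circ\theta_{E}$. Dualizing the above exact sequence (dualization being exact on abelian varieties) gives
\[
0\longrightarrow J(E)^{\vee}\stackrel{\widehat{N}}{\longrightarrow}J(C)^{\vee}\stackrel{\iota^{\vee}}{\longrightarrow}P^{\vee}\longrightarrow0 ,
\]
whence $\Ker(\iota^{\vee})=\widehat{N}\bigl(J(E)^{\vee}\bigr)=\widehat{N}\theta_{E}\bigl(J(E)\bigr)=\theta_{C}\bigl(\phi^{*}J(E)\bigr)$.

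Now $\lambda_{P}$ is the restriction of $\theta_{C}$ to $P$, i.e.\ $\lambda_{P}=\iota^{\vee}\circ\theta_{C}\circ\iota$. Hence $x\in K(P)$ if and only if $\theta_{C}(\iota x)\in\Ker(\iota^{\vee})=\theta_{C}\bigl(\phi^{*}J(E)\bigr)$, i.e.\ if and only if $\iota x\in\phi^{*}J(E)$; since $\iota x$ lies in $P$ in any case, this reads $K(P)=P\cap\phi^{*}J(E)=\phi^{*}J(E)_{2}$, which is part $(1)$. For part $(2)$ it then follows that $\deg\lambda_{P}=\#K(P)=\#J(E)_{2}=4$; and since $\mathcal{L}$ is ample we have $h^{0}(P,\mathcal{L})=\chi(\mathcal{L})$, while Riemann--Roch for line bundles on the abelian variety $P$ gives $\chi(\mathcal{L})^{2}=\deg\lambda_{P}=4$, so $h^{0}(P,\mathcal{L})=2$.

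The step demanding the most care is invoking the adjointness $\theta_{C}\circ\phi^{*}=\widehat{N}\circ\theta_{E}$ in the right normalization and checking its compatibility with the autodualities used to define $\theta_{C}$ and $\lambda_{P}$; this is a standard fact for finite morphisms of smooth curves, valid regardless of ramification, but one wants the conventions aligned. The rest is a diagram chase with the two short exact sequences plus Riemann--Roch.
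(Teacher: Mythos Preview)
Your argument is correct and is essentially the standard one underlying the reference~\cite{Mu2} that the paper cites; the paper itself gives no proof of this lemma, so there is nothing further to compare. The only point worth a remark is the exactness of the dual sequence: you are using that $P$ is an abelian subvariety (connectedness from Lemma~\ref{172616_1Dec18}) so that $0\to P\to J(C)\to J(E)\to 0$ is an exact sequence of abelian varieties and hence dualizes to an exact sequence, which is indeed the correct justification.
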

\section{Proof of Main theorem}\label{145716_29Nov18}
The main result of this paper is the following.
\begin{theorem}\label{155025_28Oct18}
 If $g>3$, then the isomorphism class of a bielliptic curve of genus $g$ is determined by the isomorphism class of its Prym variety.
\end{theorem}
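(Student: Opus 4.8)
The plan is to run the Gauss-map strategy for Torelli-type problems, as sketched in the Introduction. Fix an ample invertible sheaf $\mathcal{L}$ representing the polarization $\lambda_{P}$ of $P=P(\phi)$. By Lemma~\ref{172812_1Dec18}, $h^{0}(P,\mathcal{L})=2$, so $|\mathcal{L}|\cong\mathbf{P}^{1}$ is a pencil whose base locus $\Bs|\mathcal{L}|$ has dimension $d-2=g-3$, which is positive precisely because $g>3$. Since $\lambda_{P}$ is the restriction of the principal polarization of $J(C)$, the inclusion $P\subset J(C)$ identifies $H^{0}(P,\Omega_{P}^{1})$ with the $(-1)$-eigenspace $H^{0}(C,\omega_{C})^{-}$ of $\sigma^{*}$, which has codimension $1$ in $H^{0}(C,\omega_{C})$; thus the Gauss target $\mathbf{P}^{d-1}=\mathbf{P}(H^{0}(C,\omega_{C})^{-})$ sits as a hyperplane inside the canonical space of $C$. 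The argument then splits into two parts: (A) produce a distinguished member $D_{0}\in|\mathcal{L}|$ whose Gauss map $\Psi_{D_{0}}$ reconstructs $\phi:C\rightarrow E$; and (B) single out $D_{0}$ inside the pencil by data intrinsic to $(P,\lambda_{P})$.

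For part (A): since $E$ is an elliptic curve, $\omega_{C}\cong\mathcal{O}_{C}(\Ram(\phi))$ is the pullback $\phi^{*}\eta$ of a line bundle $\eta$ of degree $d=g-1$ on $E$, so $|\omega_{C}|^{-}=\phi^{*}|\eta|$ and the Prym-canonical image of $C$ is the elliptic normal curve $E\hookrightarrow\mathbf{P}^{d-1}$ defined by $|\eta|$, of degree $d\geq 3$, covered $2$-to-$1$ by $C$. This is formally the situation of the canonical image of a hyperelliptic curve — a rational normal curve covered $2$-to-$1$ by the curve — so, arguing as in Andreotti's treatment \cite{A} of that case, for a suitable member $D_{0}\in|\mathcal{L}|$ the branch divisor of $\Psi_{D_{0}}$ determines the projective dual of this elliptic normal curve together with the $2n$ branch points of $\phi$; by biduality one recovers $E\hookrightarrow\mathbf{P}^{d-1}$ and the branch divisor $B\subset E$, hence $\phi:C\rightarrow E$ and the isomorphism class of $(C,\sigma)$. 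The hypothesis $g>3$ is used here through $d=g-1\geq 3$ (so that $|\eta|$ is an embedding) and through Lemma~\ref{123518_3Aug18}, which guarantees that $C$ and the Abel--Prym curve $C\rightarrow P$, $p\mapsto[p-\sigma p]$, are not hyperelliptically degenerate.

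Part (B) is the heart of the matter. Using $K(P)=\phi^{*}J(E)_{2}$ (Lemma~\ref{172812_1Dec18}) and the injectivity of $\phi^{*}$ (Lemma~\ref{172616_1Dec18}), one shows that the complementary elliptic curve $E'=\phi^{*}J(E)\subset J(C)$ sweeps out the pencil: every member of $|\mathcal{L}|$ equals $\Theta_{C}^{(a)}\cap P$ for some $a\in E'$, and $a\mapsto\Theta_{C}^{(a)}\cap P$ is the degree-$2$ map $E'\rightarrow\mathbf{P}^{1}=|\mathcal{L}|$ attached to $\mathcal{O}_{E'}(\Theta_{C})$. Hence $\Bs|\mathcal{L}|=\{\,x\in P:\ x-E'\subseteq\Theta_{C}\,\}$; for such $x$, at each smooth point of $x-E'\subset\Theta_{C}$ the conormal line of $\Theta_{C}$ already lies in $H^{0}(C,\omega_{C})^{-}$, since it is orthogonal to $\mathrm{Lie}(E')=(H^{0}(C,\omega_{C})^{-})^{\perp}$. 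Therefore $\Psi_{D}\vert_{\Bs|\mathcal{L}|}$, for $D=\Theta_{C}^{(a)}\cap P$, is simply $x\mapsto\gamma_{\Theta_{C}}(x-a)$, a translate of the restriction of the Gauss map of the theta divisor of $J(C)$ — which is the content announced for Sections~\ref{154705_28Oct18} and \ref{154736_28Oct18}. From this explicit description one reads off, for each $D$, the image and the branch divisor of $\Psi_{D}\vert_{\Bs|\mathcal{L}|}$, and shows that their behaviour is extremal for $D=D_{0}$ — for instance $D_{0}$ is the member whose incidence with $\Bs|\mathcal{L}|$ is most degenerate (its restricted branch divisor acquires a distinguished component, or $D_{0}$ is singular along part of $\Bs|\mathcal{L}|$) — so that $D_{0}$ is the unique such member, at worst up to a residual $[-1]$-symmetric alternative removed by a parity count. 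These are the key Propositions of Section~\ref{150759_29Nov18}. Since $\mathcal{L}$, the pencil $|\mathcal{L}|$, the base locus $\Bs|\mathcal{L}|$ and all the maps $\Psi_{D}$ and their restrictions are built functorially from $(P,\lambda_{P})$, the reconstructed $(C,\sigma)$ depends only on the isomorphism class of the Prym variety, which is the assertion.

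The hard step is (B): isolating $D_{0}$ in the pencil. Part (A) is the familiar Gauss-map-and-biduality routine once the right divisor is available, and identifying $\Psi_{D}\vert_{\Bs|\mathcal{L}|}$ with a piece of the theta Gauss map of $J(C)$ turns the remaining problem into a computation; but proving that the image together with the branch divisor of $\Psi_{D}\vert_{\Bs|\mathcal{L}|}$ genuinely separates $D_{0}$ from every other member — and that this stays non-vacuous in the boundary case $g=4$, where $\Bs|\mathcal{L}|$ is only a curve — is where the real work lies, and is exactly what Sections~\ref{154705_28Oct18}--\ref{150759_29Nov18} are arranged to deliver.
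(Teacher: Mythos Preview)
Your outline matches the paper's approach: the identification of $\Psi_D\vert_{\Bs|\mathcal{L}|}$ with the restricted theta Gauss map of $J(C)$ is Lemma~\ref{163242_24Nov18}, and the recovery of $(E,\sum e_i,\eta)$ by projective duality for the good members is Proposition~\ref{173842_24Nov18}. The strategy and the key tool are right.

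The gap is in your guess at the distinguishing criterion in part~(B). Neither of your proposed tests---``$D_0$ is singular along part of $\Bs|\mathcal{L}|$'' or ``the restricted branch divisor acquires a distinguished component''---is what the paper uses, and neither would work. The actual mechanism is this. First one must exclude a six-element set $\Pi_{\mathcal{L}}$ consisting of the members stable under some nontrivial $K(P)$-translation (Lemma~\ref{142326_6Oct18}); for those, the map $\nu_\delta:X_\delta\to X'_\delta$ is $2:1$ rather than birational (the Remark after Lemma~\ref{121957_22Sep18}), so the normalization step fails. For the remaining $D$, the branch locus of the lifted restricted map $\psi_D:U_D\to X_D$ always has the same shape: $2n$ pieces $Z_{D,i}$, each contained in a unique hyperplane $H_{D,i}$ (Proposition~\ref{174153_24Nov18}). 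What varies with $D$ is not the branch locus but the \emph{residual} $\nu_D^{*}H_{D,i}-Z_{D,i}$ on $X_D$: generically it splits as $M_{\delta,r}+M'_{\delta,r}$ (Lemma~\ref{183745_14Dec18}), and the two components coincide precisely when $N(\delta)=\eta$ (Corollary~\ref{143241_6Oct18}). This cuts out not a unique $D_0$ but a four-element set $\Pi'_{\mathcal{L}}$ indexed by $J(E)_2$ (Lemma~\ref{134511_15Dec18}); your ``parity count'' does not reduce it further, and it need not---any one of the four reconstructs $(E,\sum e_i,\eta)$ identically via Proposition~\ref{173842_24Nov18}.
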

Let $(P,\lambda_{P})$ be the Prym variety of dimension $n\geq3$ defined from a bielliptic curve $(C,\sigma)$ of genus $g=n+1$.
We will recover the data $(E,e_{1}+\dots+e_{2n},\eta)$ from the polarized abelian variety $(P,\lambda_{P})$, where $E=C/\sigma$ is the quotient curve, $e_{1}+\dots+e_{2n}$ is the branch divisor of the covering $\phi:C\rightarrow{E}$, and $\eta\in\Pic{(E)}$ is the invertible sheaf with $\phi^{*}\eta\cong\Omega_{C}^{1}$.
We remark that $\eta^{\otimes2}\cong\mathcal{O}_{E}(e_{1}+\dots+e_{2n})$, and $\eta$ is the invertible sheaf which determines the double covering with the branch divisor $e_{1}+\dots+e_{2n}$.
\begin{proof}[Proof of Theorem~\ref{155025_28Oct18}]
 Let $\mathcal{L}$ be an ample invertible sheaf on $P$ which represents the the polarization $\lambda_{P}$.
 We denote by $K(P)$ the Kernel of $\lambda_{P}:P\rightarrow{P}^{\vee}$.
 By Lemma~\ref{172812_1Dec18}, we have $\sharp{K(P)}=4$ and $h^{0}(P,\mathcal{L})=2$.
 We define the subset $\Pi_{\mathcal{L}}$ in the linear pencil $|\mathcal{L}|$ by
 $$
 \Pi_{\mathcal{L}}=
 \{D\in|\mathcal{L}|\mid{t_{x}(D)=D\subset{P}}\
 \text{for some}\
 x\in{K(P)\setminus\{0\}}\},
 $$
 where $t_{x}$ is the translation by $x\in{P(k)}$.
 By Lemma~\ref{142326_6Oct18}, $\Pi_{\mathcal{L}}$ is a set of $6$ members for any representative $\mathcal{L}$ of the polarization $\lambda_{P}$.
 For a member $D\in|\mathcal{L}|\setminus\Pi_{\mathcal{L}}$, we consider the Gauss map
 $$
 \Psi_{D}:D\setminus{D_{\mathrm{sing}}}\longrightarrow
 \mathbf{P}^{n-1}=\Grass{(n-1,H^{0}(P,\Omega_{P}^{1})^{\vee})},
 $$
 where $\Psi_{D}(x)$ is defined by the inclusion
 $T_{x}(D)\subset{T_{x}(P)}\cong{H^{0}(P,\Omega_{P}^{1})^{\vee}}$
 of the tangent spaces at the point $x\in{D\setminus{D_{\mathrm{sing}}}}\subset{P}$.
 We set $U_{D}=\Bs|\mathcal{L}|\setminus{D_{\mathrm{sing}}}$, where $\Bs|\mathcal{L}|\subset{P}$ denotes the set of base points of the pencil $|\mathcal{L}|$.
 Let $X'_{D}=\overline{\Psi_{D}(U_{D})}\subset\mathbf{P}^{n-1}$ be the Zariski closure of $\Psi_{D}(U_{D})\subset\mathbf{P}^{n-1}$, and let $\nu_{D}:X_{D}\rightarrow{X'_{D}}$ be the normalization.
 By Lemma~\ref{152342_24Nov18}, $U_{D}$ is a nonsingular variety, hence there is a unique morphism $\psi_{D}:U_{D}\rightarrow{X_{D}}$ such that $\Psi_{D}\vert_{U_{D}}=\nu_{D}\circ\psi_{D}$.
 We consider the closed subset $Z_{D}=\overline{\psi_{D}(\Ram{(\psi_{D})})}\subset{X_{D}}$, where $\Ram{(\psi_{D})}\subset{U_{D}}$ denotes the ramification divisor of $\psi_{D}$.
 By Proposition~\ref{174153_24Nov18}, $Z_{D}$ has a canonical decomposition $Z_{D}=\bigcup_{i=1}^{2n}Z_{D,i}$, and there is a unique hyperplane $H_{D,i}\subset\mathbf{P}^{n-1}$ such that $\nu_{D}(Z_{D,i})\subset{H_{D,i}}$ for any $1\leq{i}\leq{2n}$.
 Then the effective divisor $\nu_{D}^{*}{H_{D,i}}-Z_{D,i}$ on $X_{D}$ has $2$ irreducible components for general $D\in|\mathcal{L}|\setminus\Pi_{\mathcal{L}}$, and these components coincide for special $D\in|\mathcal{L}|\setminus\Pi_{\mathcal{L}}$.
 We define the subset $\Pi'_{\mathcal{L}}$ in the linear pencil $|\mathcal{L}|$ by
 $$
 \Pi'_{\mathcal{L}}=\{D\in|\mathcal{L}|\setminus\Pi_{\mathcal{L}}\mid
 \text{$\nu_{D}^{*}{H_{D,i}}-Z_{D,i}$ is irreducible for $1\leq{i}\leq{2n}$}\}.
 $$
 By Lemma~\ref{134511_15Dec18}, $\Pi'_{\mathcal{L}}$ is a set of $4$ members for any representative $\mathcal{L}$ of the polarization $\lambda_{P}$.
 For a member $D\in\Pi'_{\mathcal{L}}$, we consider the dual variety $(X'_{D})^{\vee}\subset(\mathbf{P}^{n-1})^{\vee}$ of $X'_{D}\subset\mathbf{P}^{n-1}$ and the dual variety $H_{D,i}^{\vee}\subset(\mathbf{P}^{n-1})^{\vee}$ of $H_{D,i}\subset\mathbf{P}^{n-1}$.
 By Proposition~\ref{173842_24Nov18}, $H_{D,i}^{\vee}$ is a point on $(X'_{D})^{\vee}$, and we have an isomorphism
 $$
 (E,e_{1}+\dots+e_{2n},\eta)\cong
 ((X'_{D})^{\vee},H_{D,1}^{\vee}+\dots+H_{D,2n}^{\vee},\mathcal{O}_{(\mathbf{P}^{n-1})^{\vee}}(1)\vert_{(X'_{D})^{\vee}}).
 $$
\end{proof}
\section{Pencil of polarization divisors}\label{154705_28Oct18}
Let $(C,\sigma)$ be a bielliptic curve of genus $g=n+1>3$.
For $\delta\in\Pic^{n}{(C)}$, we set the divisor $W_{\delta}\subset{J(C)}$ by
$$
W_{\delta}(k)=\{L\in\Pic^{0}{(C)}=J(C)(k)\mid
h^{0}(C,L\otimes\delta)>0\}.
$$
We remark that the singular locus of $W_{\delta}$ is given by
$$
W_{\delta,\mathrm{sing}}(k)=\{L\in\Pic^{0}{(C)}\mid
h^{0}(C,L\otimes\delta)>1\},
$$
and $\dim{W_{\delta,\mathrm{sing}}}=n-3$ (\cite[Proposition~8]{AM}), because $C$ is not a hyperelliptic curve by Lemma~\ref{123518_3Aug18}.
Let $\lambda_{C}:J(C)\rightarrow{J(C)^{\vee}}$ be the homomorphism defined by
$$
\lambda_{C}:\,J(C)(k)\rightarrow{J(C)^{\vee}(k)}=\Pic^{0}{(J(C))};\,
x\longmapsto{[t_{x}^{*}\mathcal{O}_{C}(W_{\delta})\otimes\mathcal{O}_{C}(-W_{\delta})]},
$$
which does not depend on the choice of $\delta\in\Pic^{n}{(C)}$.
Let $\iota_{q}:C\rightarrow{J(C)}$ be the morphism defined by
$$
\iota_{q}:\,
C(k)\longrightarrow\Pic^{0}{(C)}={J(C)(k)};\,
q'\longmapsto[\mathcal{O}_{C}(q'-q)].
$$
for $q\in{C(k)}$.
\begin{lemma}\label{115529_3Aug18}
 $$
 x=\iota_{q}^{*}[\mathcal{O}_{J(C)}(W_{\delta}-W_{\delta+x})]
 \in\Pic^{0}{(C)}
 $$
 for any $q\in{C(k)}$ and $x\in\Pic^{0}{(C)}$.
\end{lemma}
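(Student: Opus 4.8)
The plan is to reduce the identity to a computation of $\iota_q^*\mathcal O_{J(C)}(W_\delta)$ by Riemann--Roch, after two preliminary reductions. Write $t_y\colon J(C)\to J(C)$ for translation by $y\in\Pic^0(C)$. Since $h^0(C,L\otimes(\delta+x))>0$ is equivalent to $t_x(L)\in W_\delta$, we have $W_{\delta+x}=t_{-x}(W_\delta)$, hence
\begin{align*}
\mathcal O_{J(C)}(W_\delta-W_{\delta+x})\cong\mathcal O_{J(C)}(W_\delta)\otimes\bigl(t_x^*\mathcal O_{J(C)}(W_\delta)\bigr)^{-1}.
\end{align*}
In particular this class lies in $\Pic^0(J(C))$, and applying the same formula with $\delta$ replaced by $\delta+y$ --- together with the translation-invariance of $\Pic^0$ of an abelian variety --- shows that it is independent of $\delta$. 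Hence both sides of the asserted identity are morphisms $\Pic^0(C)\to\Pic^0(C)$ that do not depend on $\delta$ (the right-hand side being $\iota_q^*$ composed with $x\mapsto-\lambda_C(x)$), so it suffices to prove the identity for one generic $\delta\in\Pic^n(C)$ and for $x$ ranging over a dense subset of $\Pic^0(C)$, chosen so that $\delta+x$ is again generic.

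Next I would compute $\iota_q^*\mathcal O_{J(C)}(W_\delta)$ for generic $\delta$. For such $\delta$ we have $h^0(C,\delta)=h^0(C,\delta\otimes\mathcal O_C(-q))=0$; in particular $\iota_q(q)=0\notin W_\delta$, so $\iota_q(C)\not\subset W_\delta$ and $\iota_q^*W_\delta$ is a well-defined effective divisor of degree $W_\delta\cdot\iota_q(C)=n+1$. Now $q'\in\iota_q^{-1}(W_\delta)$ means $h^0(C,\mathcal O_C(q'-q)\otimes\delta)>0$, and since $\chi(C,\mathcal O_C(q'-q)\otimes\delta)=0$ this is, by Serre duality, equivalent to $h^0(C,K_C\otimes\delta^{-1}\otimes\mathcal O_C(q-q'))>0$, i.e.\ to $q'$ lying in a member of $|K_C\otimes\delta^{-1}\otimes\mathcal O_C(q)|$. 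Since $h^0(C,K_C\otimes\delta^{-1}\otimes\mathcal O_C(q))=h^1(C,\delta\otimes\mathcal O_C(-q))=1$, this linear system consists of a single effective divisor $D_\delta$ of degree $n+1$, which is reduced for generic $\delta$. Comparing supports and degrees then forces $\iota_q^*W_\delta=D_\delta$, whence
\begin{align*}
\iota_q^*\mathcal O_{J(C)}(W_\delta)\cong\mathcal O_C(D_\delta)\cong K_C\otimes\delta^{-1}\otimes\mathcal O_C(q).
\end{align*}

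Applying this to $\delta$ and to $\delta+x$ (both generic) yields
\begin{align*}
\iota_q^*\mathcal O_{J(C)}(W_\delta-W_{\delta+x})
&\cong\bigl(K_C\otimes\delta^{-1}\otimes\mathcal O_C(q)\bigr)\otimes\bigl(K_C\otimes(\delta+x)^{-1}\otimes\mathcal O_C(q)\bigr)^{-1}\\
&\cong\delta^{-1}\otimes(\delta+x)\cong x,
\end{align*}
which is the identity for generic $(\delta,x)$, and hence for all $(\delta,x)$ by the first paragraph.

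The one genuinely delicate point is the identification $\iota_q^*W_\delta=D_\delta$: Serre duality by itself gives only equality of \emph{supports}, and in general one would have to control the intersection multiplicities of $\iota_q(C)$ with $W_\delta$ (for instance through the Riemann--Kempf description of $W_\delta$ along $\iota_q(C)$). Working with generic $\delta$, for which $D_\delta$ is reduced of degree exactly $W_\delta\cdot\iota_q(C)$, forces the two effective divisors of equal degree and support to coincide and so circumvents this; the remaining steps are routine Riemann--Roch bookkeeping.
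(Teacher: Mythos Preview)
Your argument is correct. The paper's proof is essentially a one-sentence citation: it recognises the assertion as the standard fact that $(-1)\circ\lambda_C$ is the inverse of the pull-back homomorphism $\iota_q^*:\Pic^0(J(C))\to\Pic^0(C)$, and defers to Milne's \emph{Jacobian varieties}, Lemma~6.9. You instead give a self-contained verification by computing $\iota_q^*\mathcal O_{J(C)}(W_\delta)\cong K_C\otimes\delta^{-1}\otimes\mathcal O_C(q)$ directly via Serre duality and a degree comparison, for generic $\delta$, and then use that the right-hand side of the lemma is a morphism independent of $\delta$ to conclude for all $\delta$ and $x$. The one input you still take for granted is the intersection number $W_\delta\cdot\iota_q(C)=g=n+1$; this is classical (e.g.\ the Poincar\'e formula $[\iota_q(C)]=\theta^{g-1}/(g-1)!$) and independent of the identity being proved, so there is no circularity. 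Your approach buys a proof that does not rely on an external reference and makes the role of Riemann--Roch explicit, at the cost of some length; the paper's approach is terser and makes clear that the result is a familiar ingredient of the theory of Jacobians rather than something special to the bielliptic setting.
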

\begin{proof}
 The statement means that $(-1)\circ\lambda_{C}$ is the inverse of the homomorphism $\iota_{q}^{*}:J(C)^{\vee}\rightarrow{J(C)}$ defined by the pull-buck $\iota_{q}^{*}:\Pic^{0}{(J(C))}\rightarrow\Pic^{0}{(C)}$ of invertible sheaves.
 It is well-known (\cite[Lemma~6.9]{Mi}).
\end{proof}
Let $P$ be the kernel of the Norm map $N:J(C)\rightarrow{J(E)}$, and let $D_{\delta}\subset{P}$ the fiber of the restriction of the norm map $N\vert_{W_{\delta}}:W_{\delta}\rightarrow{J(E)}$ at $0\in{J(E)}$.
We denote by $\mathcal{L}_{\delta}=\mathcal{O}_{P}(D_{\delta})=\mathcal{O}_{J(C)}(W_{\delta})\vert_{P}$ the restriction of $\mathcal{O}_{J(C)}(W_{\delta})$ to $P$.
Since $W_{\delta}$ is the theta divisor of $J(C)$, the ample invertible sheaf $\mathcal{L}_{\delta}$ represents the the polarization $\lambda_{P}$.
\begin{lemma}\label{165029_6Aug18}
 $D_{\delta+\phi^{*}s}\subset{P}$ is a member of the linear system $|\mathcal{L}_{\delta}|$ for any $s\in\Pic^{0}{(E)}$.
\end{lemma}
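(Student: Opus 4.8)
The plan is to reduce the statement to the triviality, on $P$, of one degree-zero invertible sheaf, and then obtain that triviality from the standard duality between $\phi^{*}$ and the norm map $N$.

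First I would note that, since $\delta+\phi^{*}s\in\Pic^{n}(C)$, the definitions made just before the lemma already give $\mathcal{O}_{P}(D_{\delta+\phi^{*}s})=\mathcal{O}_{J(C)}(W_{\delta+\phi^{*}s})\vert_{P}$, and that this is an honest effective divisor on $P$ because the irreducible theta divisor $W_{\delta+\phi^{*}s}$ cannot contain the abelian subvariety $P$. Next, for any $x\in\Pic^{0}(C)$ one has $W_{\delta+x}=t_{x}^{-1}(W_{\delta})$ as divisors on $J(C)$, since $h^{0}(C,L\otimes\delta\otimes x)>0$ is the same condition as $t_{x}(L)\in W_{\delta}$. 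Hence $\mathcal{O}_{J(C)}(W_{\delta+\phi^{*}s})\cong t_{\phi^{*}s}^{*}\mathcal{O}_{J(C)}(W_{\delta})$, and by the definition of $\lambda_{C}$ (the theorem of the square) this is isomorphic to $\mathcal{O}_{J(C)}(W_{\delta})\otimes\lambda_{C}(\phi^{*}s)$ with $\lambda_{C}(\phi^{*}s)\in\Pic^{0}(J(C))$. Writing $\iota_{P}\colon P\hookrightarrow J(C)$ for the inclusion, this yields
$$
\mathcal{O}_{P}(D_{\delta+\phi^{*}s})\cong\mathcal{L}_{\delta}\otimes\iota_{P}^{*}\lambda_{C}(\phi^{*}s),
$$
so the whole lemma comes down to proving $\iota_{P}^{*}\lambda_{C}(\phi^{*}s)\cong\mathcal{O}_{P}$.

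For this I would invoke that $\phi^{*}\colon J(E)\to J(C)$ and $N\colon J(C)\to J(E)$ are dual homomorphisms for the canonical principal polarizations, i.e. $\lambda_{C}\circ\phi^{*}=\widehat{N}\circ\lambda_{E}$, where $\widehat{N}=N^{*}\colon\Pic^{0}(J(E))\to\Pic^{0}(J(C))$ is pull-back of degree-zero sheaves along $N$; this is the dual of the elementary identity $N\circ\phi^{*}=[2]$ combined with the projection-formula relation $\widehat{\phi^{*}}\circ\lambda_{C}=\lambda_{E}\circ N$ (cf. \cite{Mi}). Therefore $\lambda_{C}(\phi^{*}s)=N^{*}(\lambda_{E}(s))$, and since $P=\Ker N$ by Lemma~\ref{172616_1Dec18}, the composition $N\circ\iota_{P}\colon P\to J(E)$ is the constant morphism $0$; pull-back of any invertible sheaf along a constant morphism being trivial, we get $\iota_{P}^{*}\lambda_{C}(\phi^{*}s)=(N\circ\iota_{P})^{*}\lambda_{E}(s)\cong\mathcal{O}_{P}$. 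Combined with the display above, $\mathcal{O}_{P}(D_{\delta+\phi^{*}s})\cong\mathcal{L}_{\delta}$, which is the assertion.

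The only step that is not bookkeeping with translates of $W_{\delta}$ is the polarization-compatibility $\lambda_{C}\circ\phi^{*}=\widehat{N}\circ\lambda_{E}$, so that is where I would be most careful; in practice it is a standard fact, and for us it is enough to know that $\lambda_{C}(\phi^{*}s)$ lies in the image of $N^{*}$, which makes sign conventions irrelevant. Alternatively one could try to deduce $\iota_{P}^{*}\lambda_{C}(\phi^{*}s)=0$ directly from Lemma~\ref{115529_3Aug18} by restricting $\lambda_{C}$ to a curve inside $P$, such as the image of $q'\mapsto[\mathcal{O}_{C}(q'-\sigma q')]$, but that route additionally requires knowing that such a curve generates $P$, so the duality argument is the cleaner one.
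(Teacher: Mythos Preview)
Your proof is correct and follows essentially the same route as the paper: both reduce to showing that the class $[\mathcal{O}_{J(C)}(W_{\delta}-W_{\delta+\phi^{*}s})]\in\Pic^{0}(J(C))$ lies in the image of $N^{*}$, and hence restricts trivially to $P=\Ker N$. The only cosmetic difference is that the paper extracts this from Lemma~\ref{115529_3Aug18} together with the identity $N\circ\iota_{q}=\iota_{\phi(q)}\circ\phi$, whereas you cite the equivalent duality $\lambda_{C}\circ\phi^{*}=\widehat{N}\circ\lambda_{E}$ as a standard fact.
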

\begin{proof}
 By Lemma~\ref{115529_3Aug18},
 $$
 \phi^{*}s=\iota_{q}^{*}[\mathcal{O}_{J(C)}(W_{\delta}-W_{\delta+\phi^{*}s})]
 \in\Pic^{0}{(C)}
 $$
 for $s\in\Pic^{0}{(E)}$ and $q\in{C(k)}$.
 We set $s'\in\Pic^{0}{(J(E))}$ by $s=\iota_{\phi(q)}^{*}s'$, where $\iota_{\phi(q)}:E\overset{\sim}{\rightarrow}{J(E)}$ is the isomorphism determined by $\iota_{\phi(q)}(\phi(q))=0$.
 Then we have
 $$
 N^{*}s'=[\mathcal{O}_{J(C)}(W_{\delta}-W_{\delta+\phi^{*}s})]
 \in\Pic^{0}{(J(C))},
 $$
 because $\phi^{*}s=\iota_{q}^{*}N^{*}s'$ and $\iota_{q}^{*}:\Pic^{0}(J(C))\rightarrow\Pic^{0}{(C)}$ is an isomorphism.
 Since $(N^{*}s')\vert_{P}=0\in\Pic^{0}{(P)}$, we have
 $$
 \mathcal{O}_{P}(D_{\delta+\phi^{*}s})
 \cong\mathcal{O}_{J(C)}(W_{\delta+\phi^{*}s})\vert_{P}
 \cong\mathcal{O}_{J(C)}(W_{\delta})\vert_{P}
 =\mathcal{L}_{\delta}.
 $$
\end{proof}
We denote by $C^{(i)}$ the $i$-th symmetric products of $C$.
For $\delta\in\Pic^{n}{(C)}$, we define the morphism $\beta_{\delta}^{i}:C^{(n-2i)}\times{E^{(i)}}\rightarrow{J(C)}$ by
\begin{align*}
 \beta^{i}_{\delta}:\,C^{(n-2i)}(k)\times{E^{(i)}(k)}&\longrightarrow
 J(C)(k)=\Pic^{0}{(C)};\\
 (q_{1}+\dots+q_{n-2i},p_{1}+\dots+p_{i})&\longmapsto
 \mathcal{O}_{C}\Bigl(\sum_{j=1}^{n-2i}q_{j}\Bigr)\otimes
 \phi^{*}\mathcal{O}_{E}\Bigl(\sum_{j=1}^{i}p_{j}\Bigr)\otimes\delta^{\vee}.
\end{align*}
We remark that $W_{\delta}=\Image{(\beta_{\delta}^{0})}$, and we set
$$
B^{i}_{\delta}=
\begin{cases}
 \Image{(\beta_{\delta}^{i})}&(1\leq{2i}\leq{n}),\\
 \emptyset&(2i>n).
\end{cases}
$$
\begin{lemma}\label{171048_17Sep18}
 $B^{1}_{\delta}\setminus{W_{\delta,\mathrm{sing}}}\neq\emptyset$ and $B_{\delta}^{2}\subset{W_{\delta,\mathrm{sing}}}$.
\end{lemma}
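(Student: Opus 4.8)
The plan is to analyze the two images $B^1_\delta = \Image(\beta^1_\delta)$ and $B^2_\delta = \Image(\beta^2_\delta)$ separately by unwinding what it means for a line bundle in the image to have extra sections. Recall $W_{\delta,\mathrm{sing}}$ consists of those $L \in \Pic^0(C)$ with $h^0(C, L\otimes\delta) > 1$. So for the first assertion I want to exhibit a point of $B^1_\delta$ where $h^0 = 1$, and for the second I want to show every point of $B^2_\delta$ has $h^0 \geq 2$.

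For $B^2_\delta \subset W_{\delta,\mathrm{sing}}$: a general point of $B^2_\delta$ has the form $L = \mathcal{O}_C(q_1 + \dots + q_{n-4}) \otimes \phi^*\mathcal{O}_E(p_1 + p_2) \otimes \delta^\vee$, so $L \otimes \delta \cong \mathcal{O}_C(q_1 + \dots + q_{n-4}) \otimes \phi^*\mathcal{O}_E(p_1+p_2)$. The point is that $\phi^*\mathcal{O}_E(p_i) \cong \mathcal{O}_C(\phi^{-1}(p_i))$, and the fiber $\phi^{-1}(p_i)$ is a divisor of degree $2$ on $C$ moving in a pencil (it is the pullback of the degree-$1$ linear system on $E$ through $p_i$ — more precisely, since $E$ is elliptic, $|\mathcal{O}_E(p_1+p_2)|$ is a pencil, so $\phi^*\mathcal{O}_E(p_1+p_2)$ has $h^0 \geq 2$). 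Adding the fixed part $\mathcal{O}_C(q_1+\dots+q_{n-4})$ preserves this, so $h^0(C, L\otimes\delta) \geq 2$, i.e. $L \in W_{\delta,\mathrm{sing}}$. Since this holds on a dense subset and $W_{\delta,\mathrm{sing}}$ is closed, $B^2_\delta \subset W_{\delta,\mathrm{sing}}$. (If $n = 4$ one reads the empty product as trivial; if $2i > n$, i.e. $n \leq 3$, then $B^2_\delta = \emptyset$ trivially, but here $n \geq 3$ so only $n=3$ needs the convention, and indeed $B^2_\delta=\emptyset\subset W_{\delta,\mathrm{sing}}$.)

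For $B^1_\delta \not\subset W_{\delta,\mathrm{sing}}$: I must find a single effective divisor $D = q_1 + \dots + q_{n-2}$ on $C$ and a point $p \in E$ such that $h^0\bigl(C, \mathcal{O}_C(D) \otimes \phi^*\mathcal{O}_E(p)\bigr) = 1$, i.e. $\mathcal{O}_C(D + \phi^{-1}(p))$ has exactly one section. Equivalently, writing $D' = D + \phi^{-1}(p)$ which has degree $n$, I need $h^0(C, \mathcal{O}_C(D')) = 1$; by Riemann--Roch on a curve of genus $g = n+1$, $h^0 - h^1 = n - g + 1 = 0$, so I need $h^1(C,\mathcal{O}_C(D')) = 1$, i.e. $h^0(C, \Omega^1_C(-D')) = 1$: the divisor $D'$ should impose independent conditions on canonical forms but lie on a unique canonical divisor (in fact this is automatic in degree $n = g-1$ for a general effective divisor, since a general such divisor is nonspecial... wait, degree $g-1$ general divisor has $h^1 = 1$ by the theory of the theta divisor, as $W_{g-1} \subsetneq \Pic^{g-1}$, so general $D'$ with this property has $h^0 = 1$). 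The genuine content is that I can choose $D'$ of the constrained shape $D + \phi^{-1}(p)$ to be general enough. Since the map $(q_1,\dots,q_{n-2},p) \mapsto \mathcal{O}_C(D + \phi^{-1}(p))$ has image $B^1_\delta$ of dimension $n-1 \geq 2 > 0$, and the non-special locus is open dense in $\Pic^{n}$-ish terms, a dimension count shows $B^1_\delta$ cannot be contained in $W_{\delta,\mathrm{sing}}$, which has dimension $n-3 < n-1$. So the image, being of dimension $n-1$, meets the complement.

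The main obstacle will be the $B^1_\delta$ part, specifically verifying that $B^1_\delta$ genuinely has dimension $n-1$ (equivalently that $\beta^1_\delta$ is generically finite onto its image) and that this dimension strictly exceeds $\dim W_{\delta,\mathrm{sing}} = n-3$; once that is in hand the non-inclusion is forced for dimension reasons and I need not exhibit an explicit point. The dimension of $B^1_\delta$ is at most $(n-2) + 1 = n-1$; the reverse inequality follows because the composition with $N: J(C) \to J(E)$ is nonconstant on the $E^{(1)}$-factor (using Lemma~\ref{172616_1Dec18}(1), $\phi^*$ is injective, so $p \mapsto \phi^*\mathcal{O}_E(p)$ moves), while fixing $p$ and varying the $q_j$ already gives an $(n-2)$-dimensional family inside a fiber of $N$ — this uses that $C$ is not hyperelliptic so that $\mathcal{O}_C(q_1+\dots+q_{n-2})$ depends on the $q_j$ up to finite ambiguity. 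Hence $\dim B^1_\delta = n-1 > n-3$, and since $B^1_\delta$ is irreducible it is not contained in the proper closed subset $W_{\delta,\mathrm{sing}}$.
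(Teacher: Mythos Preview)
Your argument for $B^{2}_{\delta}\subset W_{\delta,\mathrm{sing}}$ is correct and in fact cleaner than the paper's. The paper produces, for a generic point of $C^{(n-4)}\times E^{(2)}$, two distinct effective divisors in $|\Omega_{C}^{1}\otimes(L\otimes\delta)^{\vee}|$ via an explicit elliptic-curve computation, then closes up. You instead observe that $h^{0}(E,\mathcal{O}_{E}(p_{1}+p_{2}))=2$ and that pulling back along $\phi$ and tensoring with the effective bundle $\mathcal{O}_{C}(q_{1}+\dots+q_{n-4})$ can only raise $h^{0}$; this already gives $h^{0}(C,L\otimes\delta)\geq 2$ for \emph{every} point of $B^{2}_{\delta}$, so your density remark is not even needed.

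For $B^{1}_{\delta}\not\subset W_{\delta,\mathrm{sing}}$ your dimension-comparison strategy is the right one, but there is a slip in the execution. The assertion that ``fixing $p$ and varying the $q_{j}$ gives an $(n-2)$-dimensional family \emph{inside a fiber of $N$}'' is false: one has $N\bigl(\beta^{1}_{\delta}(q_{1}+\dots+q_{n-2},p)\bigr)=[\mathcal{O}_{E}(\sum\phi(q_{j})+2p)]\otimes N(\delta)^{\vee}$, which moves with the $q_{j}$. So your two ingredients do not combine to give $\dim B^{1}_{\delta}=n-1$; indeed, proving $\dim B^{1}_{\delta}=n-1$ directly in $J(C)$ amounts to showing that the general point of $B^{1}_{\delta}$ has $h^{0}=1$, which is exactly what you are trying to establish. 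Fortunately you do not need the full $n-1$: fixing $p$, the image of $C^{(n-2)}\times\{p\}$ is a translate of $W_{n-2}(C)$, of dimension $n-2$, so already $\dim B^{1}_{\delta}\geq n-2>n-3=\dim W_{\delta,\mathrm{sing}}$ and the non-inclusion follows. The paper avoids the circularity differently, by comparing dimensions upstairs in $C^{(n)}$: there $\beta^{1}$ is visibly finite so $\dim B^{1}=n-1$, while $(\beta^{0}_{\delta})^{-1}(W_{\delta,\mathrm{sing}})$ has dimension $n-2$.
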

\begin{proof}
 Let $B^{1}$ be the image of the morphism $\beta^{1}:C^{(n-2)}\times{E}\rightarrow{C^{(n)}}$ defined by
 $$
 \beta^{1}:\,
 C^{(n-2)}(k)\times{E(k)}\rightarrow{C^{(n)}(k)};\,
 (q_{1}+\dots+q_{n-2},\phi(q))\longmapsto
 q_{1}+\dots+q_{n-2}+q+\sigma(q).
 $$
 Since $C$ is not a hyperelliptic curve, we have $\dim{(\beta_{\delta}^{0})^{-1}(W_{\delta,\mathrm{sing}})}=n-2<n-1=\dim{B^{1}}$, hence $B_{\delta}^{1}=\beta_{\delta}^{0}(B^{1})\nsubseteq{W_{\delta,\mathrm{sing}}}$.\par
 To prove the second statement, we assume that $n\geq4$, because $B_{\delta}^{2}=\emptyset$ for $n=3$.
 Let $F\subset{C^{(n-4)}\times{E^{(2)}}}$ be the fiber of the composition
 $$
 C^{(n-4)}\times{E^{(2)}}\overset{\beta^{2}_{\delta}}{\longrightarrow}
 J(C)\overset{N}{\longrightarrow}J(E)
 $$
 at $\eta-{N(\delta)}\in{J(E)(k)}$, where $\eta\in\Pic^{n}{(E)}$ denotes the unique invertible sheaf on $E$ with $\phi^{*}\eta\cong\Omega_{C}^{1}$.
 We set $U=(C^{(n-4)}\times{E^{(2)}})\setminus(F\cup(C^{(n-4)}\times{\Delta_{E}}))$, where $\Delta_{E}\subset{E^{(2)}}$ denotes the image of the diagonal in $E\times{E}$.
 For $y=(q_{1}+\dots+q_{n-4},\phi(q)+\phi(r))\in{U(k)}$, there are points $q',r'\in{C(k)}$ such that
 $$
 \begin{cases}
  \mathcal{O}_{E}(\phi(q'))\cong
  \eta\otimes\mathcal{O}_{E}
  (-\phi(q_{1})-\dots-\phi(q_{n-4})-\phi(q)-2\phi(r)),\\
  \mathcal{O}_{E}(\phi(r'))\cong
  \eta\otimes\mathcal{O}_{E}
  (-\phi(q_{1})-\dots-\phi(q_{n-4})-2\phi(q)-\phi(r)).
 \end{cases}
 $$
 Then we have
 \begin{align*}
  &\Omega_{C}^{1}(-q_{1}-\dots-q_{n-4}-q-\sigma(q)-r-\sigma(r))\\
  \cong&\phi^{*}\eta\otimes\mathcal{O}_{C}
  (-q_{1}-\dots-q_{n-4}-q-\sigma(q)-r-\sigma(r))\\
  \cong&
  \mathcal{O}_{C}
  (\sigma(q_{1})+\dots+\sigma(q_{n-4})+q'+\sigma(q')+r+\sigma(r))\\
  \cong&
  \mathcal{O}_{C}
  (\sigma(q_{1})+\dots+\sigma(q_{n-4})+q+\sigma(q)+r'+\sigma(r')).
 \end{align*}
 We remark that $\phi(q)\neq\phi(q')$ and $\phi(q)\neq\phi(r)$, because $y\notin{F}$ and $y\notin{C^{(n-4)}\times{\Delta_{E}}}$.
 Hence we have
 $h^{0}(C,\Omega_{C}^{1}(-q_{1}-\dots-q_{n-4}-q-\sigma(q)-r-\sigma(r)))
 >1$
 and $\beta_{\delta}^{2}(y)\in{W_{\delta,\mathrm{sing}}}$.
 Since
 $U\subset(\beta_{\delta}^{2})^{-1}(W_{\delta,\mathrm{sing}})$
 is a dense subset of $C^{(n-4)}\times{E^{(2)}}$, we have
 $C^{(n-4)}\times{E^{(2)}}
 =(\beta_{\delta}^{2})^{-1}(W_{\delta,\mathrm{sing}})$.
\end{proof}
\begin{lemma}\label{184538_11Aug18}
 For $s\in\Pic^{0}{(E)}$, $D_{\delta}=D_{\delta+\phi^{*}s}\subset{P}$ if and only if
 $s=0$ or $s=\eta-N(\delta)$.
\end{lemma}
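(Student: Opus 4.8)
\emph{Proof sketch.} I will prove the two implications separately, the reverse implication being the substantial one.

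\smallskip

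\noindent\textbf{The easy direction.} For $s=0$ there is nothing to prove, so take $s=\eta-N(\delta)$. Using the standard identity $\phi^{*}N(L)\cong{L}\otimes\sigma^{*}L$ for $L\in\Pic{(C)}$ together with $\phi^{*}\eta\cong\Omega_{C}^{1}$, one gets
$$
\delta+\phi^{*}(\eta-N(\delta))\cong\Omega_{C}^{1}\otimes(\sigma^{*}\delta)^{\vee}.
$$
For any $\epsilon\in\Pic^{n}{(C)}$ the divisor $W_{\epsilon}$ is reduced and irreducible with support $\{L\mid h^{0}(C,L\otimes\epsilon)>0\}$; Serre duality and $h^{0}=h^{1}$ in degree $n=g-1$ show that the automorphisms $\sigma^{*}\colon L\mapsto\sigma^{*}L$ and $(-1)\colon L\mapsto L^{\vee}$ of $J(C)$ satisfy $\sigma^{*}W_{\epsilon}=W_{\sigma^{*}\epsilon}$ and $(-1)^{*}W_{\epsilon}=W_{\Omega_{C}^{1}\otimes\epsilon^{\vee}}$. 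Both automorphisms preserve $P=\Ker{N}$, so intersecting with $P$ gives $\sigma^{*}D_{\epsilon}=D_{\sigma^{*}\epsilon}$ and $(-1)^{*}D_{\epsilon}=D_{\Omega_{C}^{1}\otimes\epsilon^{\vee}}$ as subschemes of $P$. Finally $\sigma^{*}L\cong{L}^{\vee}$ for $L\in{P}$ (because $L\otimes\sigma^{*}L=\phi^{*}N(L)=\mathcal{O}_{C}$), so the two induced automorphisms of $P$ coincide, and taking $\epsilon=\sigma^{*}\delta$ yields
$$
D_{\delta+\phi^{*}(\eta-N(\delta))}=D_{\Omega_{C}^{1}\otimes(\sigma^{*}\delta)^{\vee}}=(-1)^{*}D_{\sigma^{*}\delta}=(-1)^{*}\sigma^{*}D_{\delta}=D_{\delta}.
$$

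\smallskip

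\noindent\textbf{The main direction.} Conversely, assume $D_{\delta+\phi^{*}s}=D_{\delta}$. By Lemma~\ref{165029_6Aug18} every $D_{\delta+\phi^{*}s}$, $s\in\Pic^{0}{(E)}$, lies in $|\mathcal{L}_{\delta}|$, and $h^{0}(P,\mathcal{L}_{\delta})=2$ by Lemma~\ref{172812_1Dec18}, so $|\mathcal{L}_{\delta}|\cong\mathbf{P}^{1}$; as these divisors form an algebraic family they define a morphism $\nu\colon\Pic^{0}{(E)}\to|\mathcal{L}_{\delta}|=\mathbf{P}^{1}$, $s\mapsto D_{\delta+\phi^{*}s}$. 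Since the first part shows $\{0,\eta-N(\delta)\}\subseteq\nu^{-1}(D_{\delta})$, it suffices to prove $\deg\nu=2$, for then the fibre $\nu^{-1}(D_{\delta})$ has at most two points. To compute the degree, consider the incidence variety $\mathcal{D}=\{(s,x)\in\Pic^{0}{(E)}\times{P}\mid x\in D_{\delta+\phi^{*}s}\}$ with projections $p\colon\mathcal{D}\to\Pic^{0}{(E)}$ and $q\colon\mathcal{D}\to{P}$; the fibres of $p$ are divisors in $P$, so $\mathcal{D}$ has pure dimension $n$. Identifying $x$ with the corresponding point of $\Pic^{0}{(C)}$, one has $q^{-1}(x)=\{s\mid h^{0}(C,\phi^{*}s\otimes{x}\otimes\delta)>0\}=\phi^{*}\Pic^{0}{(E)}\cap t_{-x}W_{\delta}$, a translate of $\phi^{*}\Pic^{0}{(E)}\cap W_{\delta}$. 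This is never empty because $W_{\delta}$ is ample, and for general $x$ it is not all of $\phi^{*}\Pic^{0}{(E)}$, since $\phi^{*}\Pic^{0}{(E)}+P=J(C)$ would otherwise force $h^{0}(C,M)>0$ for every $M\in\Pic^{n}{(C)}$. Hence $q$ is surjective and generically finite of degree $\phi^{*}\Pic^{0}{(E)}\cdot W_{\delta}$, and the standard incidence computation identifies this number with $\deg\nu$ via the equalities $q^{-1}(x)=\nu^{-1}(D_{x})$ for general $x$, where $D_{x}$ is the unique member of $|\mathcal{L}_{\delta}|$ through $x$. Finally, $\phi^{*}\Pic^{0}{(E)}\cdot W_{\delta}=2$: by the projection formula this intersection number is the degree of the divisor $\{s\in\Pic^{0}{(E)}\mid h^{0}(E,s\otimes\phi_{*}\delta)>0\}$, and $\phi_{*}\delta$ is locally free of rank $2$ and degree $\chi(C,\delta)=0$ on the elliptic curve $E$, for which this locus is a divisor of degree $2$. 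Thus $\deg\nu=2$, and $D_{\delta+\phi^{*}s}=D_{\delta}$ forces $s\in\nu^{-1}(D_{\delta})=\{0,\eta-N(\delta)\}$.

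\smallskip

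The crux of the argument is the computation $\deg\nu=2$: one must check that $q$ is generically finite of degree \emph{exactly} $2$ (needing that $\phi^{*}\Pic^{0}{(E)}$ lies in no translate of $W_{\delta}$, that $\mathcal{D}$ is pure of dimension $n$ so that $\deg q$ equals the length of a general fibre, and that $\phi_{*}\delta$ has the generic behaviour of a rank-$2$ degree-$0$ bundle on $E$), and one must correctly match $\deg q$ with $\deg\nu$ through the incidence variety. By contrast, the line-bundle identities, the naturality of $W_{\epsilon}$ under automorphisms, and the relation $\sigma^{*}|_{P}=(-1)_{P}$ used in the easy direction are all routine.
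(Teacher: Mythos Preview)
Your proof is correct and takes a genuinely different route from the paper. For the hard direction the paper argues pointwise: it chooses a generic $L\in D_\delta\setminus(W_{\delta,\mathrm{sing}}\cup B^1_\delta\cup\Image(\alpha_{\delta+\phi^*s}))$, writes out the unique effective divisors representing $L\otimes\delta$, $L\otimes\delta\otimes\phi^*s$, and $\Omega_C^1\otimes L^\vee\otimes\delta^\vee$, and by tracking where each $\sigma(r_i)$ can land forces $s=\eta-N(\delta)$. Your approach is global: you compute $\deg\nu$ as the intersection number $\phi^*J(E)\cdot[W_\delta]=2$, and since the easy direction already exhibits two points in $\nu^{-1}(D_\delta)$ you are done. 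This reverses the paper's logical order (the paper deduces the double-covering statement of Lemma~\ref{135618_16Aug18} \emph{from} the present lemma, whereas you essentially prove that statement first) and is more conceptual; the paper's argument stays within explicit divisor arithmetic on $C$ and needs no intersection theory on $J(C)$. On the point you flag about $\phi_*\delta$: this bundle need not be semistable for every $\delta$ (e.g.\ when $\delta\in B^1_\delta$), so the theta-locus description can degenerate; but the intersection number is invariant under algebraic equivalence and you have already shown the intersection is proper for a general translate $W_\delta-x$ with $x\in P$, so the computation goes through after replacing $\delta$ by $\delta\otimes x$. Alternatively, $N\circ\phi^*=[2]_{J(E)}$ gives $(\phi^*)^*\theta_{J(C)}\equiv 2\,\theta_{J(E)}$ directly, bypassing $\phi_*\delta$ altogether.
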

\begin{proof}
 For $L\in{P(k)}\subset\Pic^{0}{(C)}$, we have $0=\phi^{*}N(L)=L+\sigma^{*}L\in\Pic^{0}{(C)}$.
 Hence we have $D_{\delta}=D_{\delta+\phi^{*}(\eta-N(\delta))}$, because
 $$
 L\in{D}_{\delta}(k)
 \Longleftrightarrow
 h^{0}(C,L\otimes\delta)>0
 \Longleftrightarrow
 h^{0}(C,\sigma^{*}L\otimes\sigma^{*}\delta)>0
 \Longleftrightarrow
 h^{0}(C,L^{\vee}\otimes\sigma^{*}\delta)>0
 $$
 $$
 \Longleftrightarrow
 h^{0}(C,\Omega_{C}^{1}\otimes{L}\otimes\sigma^{*}\delta^{\vee})>0
 \Longleftrightarrow
 L\in{D}_{[\Omega_{C}^{1}]-\sigma^{*}\delta}(k)
 =D_{\delta+\phi^{*}(\eta-N(\delta))}(k).
 $$
 We assume that $D_{\delta}=D_{\delta+\phi^{*}s}$ for $s\neq0\in\Pic^{0}{(E)}$.
 Let $\alpha_{\delta+\phi^{*}s}:C^{(n-2)}\times{C}\rightarrow{J(C)}$ be the morphism defined by
 \begin{align*}
  \alpha_{\delta+\phi^{*}s}:\,C^{(n-2)}(k)\times{C(k)}&\longrightarrow\Pic^{0}{(C)}={J(C)(k)};\\
  (q_{1}+\dots+q_{n-2},q)&\longmapsto
  \mathcal{O}_{C}(q_{1}+\dots+q_{n-2}+2q)\otimes\delta^{\vee}\otimes\phi^{*}s^{\vee}.
 \end{align*}
 Then the set
 $D_{\delta}\setminus{(W_{\delta,\mathrm{sing}}\cup{B^{1}_{\delta}}\cup\Image{(\alpha_{\delta+\phi^{*}s})})}$
 is not empty, because
 $$
 \dim{D_{\delta}\cap{(W_{\delta,\mathrm{sing}}\cup{B^{1}_{\delta}}\cup\Image{(\alpha_{\delta+\phi^{*}s})})}}
 <n-1=\dim{D_{\delta}}.
 $$
 For
 $L\in
 D_{\delta}(k)\setminus{(W_{\delta,\mathrm{sing}}\cup{B^{1}_{\delta}}\cup\Image{(\alpha_{\delta+\phi^{*}s})})}$,
 there is $r_{1}+\dots+r_{n}\in{C^{(n)}}(k)$ such that
 $L\otimes\delta\otimes\phi^{*}s\cong\mathcal{O}_{C}(r_{1}+\dots+r_{n})$,
 because $L\in{D_{\delta}(k)}=D_{\delta+\phi^{*}s}(k)\subset{W_{\delta+\phi^{*}s}(k)}$.
 Since $L\in{W_{\delta}(k)\setminus{W_{\delta,\mathrm{sing}}(k)}}$, we have
 $h^{0}(C,\Omega_{C}^{1}\otimes{L^{\vee}}\otimes\delta^{\vee})
 =h^{0}(C,L\otimes\delta)=1$.
 Let $q_{1}+\dots+q_{n}\in{C^{(n)}(k)}$ and $q'_{1}+\dots+q'_{n}\in{C^{(n)}(k)}$ be the effective divisors defined by
 $$
 L\otimes\delta\cong\mathcal{O}_{C}(q_{1}+\dots+q_{n}),\quad
 \Omega_{C}^{1}\otimes{L^{\vee}}\otimes\delta^{\vee}\cong\mathcal{O}_{C}(q'_{1}+\dots+q'_{n}).
 $$
 Let $\phi(u_{i})\in{E(k)}$ be the point determined by $s=[\mathcal{O}_{E}(\phi(r_{i})-\phi(u_{i}))]$.
 Then
 $$
 L\otimes\delta\otimes\mathcal{O}_{C}(\sigma(r_{i}))
 \cong\mathcal{O}_{C}(r_{1}+\dots+r_{n}-r_{i}+u_{i}+\sigma(u_{i})).
 $$
 If $\sigma(r_{i})\notin\{q'_{1},\dots,q'_{n}\}$, then
 $$
 h^{0}(C,L\otimes\delta\otimes\mathcal{O}_{C}(\sigma(r_{i})))=
 h^{0}(C,\Omega_{C}^{1}\otimes{L^{\vee}}\otimes\delta^{\vee}
 \otimes\mathcal{O}_{C}(-\sigma(r_{i})))+1=1,
 $$
 hence
 $$
 q_{1}+\dots+q_{n}+\sigma(r_{i})=
 r_{1}+\dots+r_{n}-r_{i}+u_{i}+\sigma(u_{i}).
 $$
 Since $s\neq0$, we have $\sigma(r_{i})=r_{j}$ for some $j\neq{i}$, and $L\in{B^{1}_{\delta}(k)}$.
 It is a contradiction to $L\notin{B^{1}_{\delta}(k)}$,
 hence $\sigma(r_{i})\in\{q'_{1},\dots,q'_{n}\}$ for any $1\leq{i}\leq{n}$.
 Here the condition $L\notin\Image{(\alpha_{\delta+\phi^{*}s})}$ implies that $\sharp\{r_{1},\dots,r_{n}\}=n$ and
 $$
 {L^{\vee}}\otimes\sigma^{*}\delta\otimes\phi^{*}s
 \cong\mathcal{O}_{C}(\sigma(r_{1})+\dots+\sigma(r_{n}))
 =\mathcal{O}_{C}(q'_{1}+\dots+q'_{n})
 \cong\Omega_{C}^{1}\otimes{L^{\vee}}\otimes\delta^{\vee}.
 $$
 Hence we have
 $\phi^{*}s=[\Omega_{C}^{1}]-\delta-\sigma^{*}\delta
 =\phi^{*}(\eta-N(\delta))$, and $s=\eta-N(\delta)$ by Lemma~\ref{172616_1Dec18}.
\end{proof}
Let $B_{\delta}\subset{J(C)}$ be the subset
$$
B_{\delta}=\bigcap_{s\in\Pic^{0}{(E)}}W_{\delta+\phi^{*}s}.
$$
\begin{lemma}\label{154952_6Aug18}
 $B_{\delta}\setminus{W_{\delta,\mathrm{sing}}}={B^{1}_{\delta}}\setminus{W_{\delta,\mathrm{sing}}}$.
\end{lemma}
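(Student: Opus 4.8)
The plan is to prove the two inclusions separately. The inclusion $B^{1}_{\delta}\setminus W_{\delta,\mathrm{sing}}\subseteq B_{\delta}\setminus W_{\delta,\mathrm{sing}}$ is elementary: if $L=\beta^{1}_{\delta}(q_{1}+\dots+q_{n-2},p_{1})$, then $L\otimes\delta\cong\mathcal{O}_{C}(q_{1}+\dots+q_{n-2})\otimes\phi^{*}\mathcal{O}_{E}(p_{1})$, and for every $s\in\Pic^{0}(E)$ the line bundle $\mathcal{O}_{E}(p_{1})\otimes s$ has degree $1$ on the elliptic curve $E$, hence equals $\mathcal{O}_{E}(p'_{1})$ for some point $p'_{1}\in E$; therefore $L\otimes\delta\otimes\phi^{*}s\cong\mathcal{O}_{C}(q_{1}+\dots+q_{n-2}+\phi^{*}p'_{1})$ is effective of degree $n$, so $h^{0}(C,L\otimes\delta\otimes\phi^{*}s)>0$ and $L\in W_{\delta+\phi^{*}s}$. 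As this holds for all $s$, we get $L\in B_{\delta}$, which gives the first inclusion.

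For the reverse inclusion I would start with $L\in B_{\delta}\setminus W_{\delta,\mathrm{sing}}$ and put $G=L\otimes\delta\in\Pic^{n}(C)$. Since $B_{\delta}\subseteq W_{\delta}$ and $L\notin W_{\delta,\mathrm{sing}}$, we have $h^{0}(C,G)=1$, while $h^{0}(C,G\otimes\phi^{*}s)>0$ for every $s\in\Pic^{0}(E)$. Consider the rank $2$ vector bundle $\mathcal{E}=\phi_{*}G$ on $E$; by Riemann--Roch $\deg\mathcal{E}=\deg G+1-g=0$, and the projection formula gives $h^{0}(E,\mathcal{E}\otimes s)=h^{0}(C,G\otimes\phi^{*}s)$ for all $s\in\Pic^{0}(E)$. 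So $\mathcal{E}$ is a rank $2$, degree $0$ bundle on $E$ with $h^{0}(E,\mathcal{E})=1$ and $h^{0}(E,\mathcal{E}\otimes s)>0$ for every $s\in\Pic^{0}(E)$.

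The key step is to show that such an $\mathcal{E}$ must split as $\mathcal{E}\cong\mathcal{O}_{E}(p)\oplus M$ with $\deg M=-1$. First, $\mathcal{E}$ cannot be semistable: a semistable bundle of slope $0$ on an elliptic curve is an iterated extension of degree $0$ line bundles (there are no stable bundles of rank $\geq 2$ and degree $0$), hence $h^{0}(\mathcal{E}\otimes s)=0$ for all but finitely many $s\in\Pic^{0}(E)$, contradicting our hypothesis. So $\mathcal{E}$ is unstable; its Harder--Narasimhan sub-line-bundle $M_{1}$ has $\deg M_{1}\geq 1$, the saturated quotient $M_{2}=\mathcal{E}/M_{1}$ is a line bundle with $\deg M_{2}=-\deg M_{1}\leq -1$, and the extension $0\to M_{1}\to\mathcal{E}\to M_{2}\to 0$ splits because $\mathrm{Ext}^{1}(M_{2},M_{1})=H^{1}(E,M_{1}\otimes M_{2}^{\vee})=0$, the bundle $M_{1}\otimes M_{2}^{\vee}$ having degree $2\deg M_{1}\geq 2$. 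Finally $1=h^{0}(E,\mathcal{E})=h^{0}(E,M_{1})+h^{0}(E,M_{2})=\deg M_{1}$ forces $\deg M_{1}=1$, so $M_{1}=\mathcal{O}_{E}(p)$ for some $p\in E$.

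To conclude, the inclusion $\mathcal{O}_{E}(p)\hookrightarrow\phi_{*}G$ corresponds under the adjunction $\mathrm{Hom}_{E}(\mathcal{O}_{E}(p),\phi_{*}G)\cong\mathrm{Hom}_{C}(\phi^{*}\mathcal{O}_{E}(p),G)$ to a nonzero homomorphism $\phi^{*}\mathcal{O}_{E}(p)\to G$ of line bundles on $C$, whose cokernel is $\mathcal{O}_{D'}$ for an effective divisor $D'$ of degree $n-2$; hence $G\cong\mathcal{O}_{C}(D')\otimes\phi^{*}\mathcal{O}_{E}(p)$ and $L=G\otimes\delta^{\vee}=\beta^{1}_{\delta}(D',p)\in B^{1}_{\delta}$, which proves $B_{\delta}\setminus W_{\delta,\mathrm{sing}}\subseteq B^{1}_{\delta}\setminus W_{\delta,\mathrm{sing}}$. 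I expect the main obstacle to be the structural analysis of $\mathcal{E}=\phi_{*}G$: one has to rule out every semistable possibility (via the absence of stable bundles of rank $\geq 2$ and degree $0$ on an elliptic curve, equivalently Atiyah's classification) and then use that an unstable rank $2$ bundle automatically splits in this situation.
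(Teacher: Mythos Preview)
Your proof is correct, and it takes a genuinely different route from the paper for the nontrivial inclusion $B_{\delta}\setminus W_{\delta,\mathrm{sing}}\subseteq B^{1}_{\delta}$. The paper argues purely with divisors: starting from $L\in B_{\delta}\setminus W_{\delta,\mathrm{sing}}$, it writes the unique effective divisor in $|\Omega_{C}^{1}\otimes L^{\vee}\otimes\delta^{\vee}|$ as $r_{1}+\dots+r_{n}$, chooses $s\in\Pic^{0}(E)$ avoiding a finite bad set, and then compares the unique section of $L\otimes\delta\otimes\mathcal{O}_{C}(\sigma(q_{n}))$ (forced by a Riemann--Roch computation) with an explicitly constructed one to conclude that some $\sigma(q_{n})$ already appears among $q_{1},\dots,q_{n-1}$, hence $L\in B^{1}_{\delta}$. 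Your argument instead pushes the problem down to $E$: you interpret the hypothesis as saying that the rank~$2$, degree~$0$ bundle $\phi_{*}(L\otimes\delta)$ has a section after every degree~$0$ twist, rule out semistability via the Jordan--H\"older factors (no stable bundle of rank~$2$, degree~$0$ on an elliptic curve), and then read off a degree~$1$ line subbundle $\mathcal{O}_{E}(p)$ from the splitting of the Harder--Narasimhan filtration; adjunction converts this into the desired $\phi^{*}\mathcal{O}_{E}(p)\hookrightarrow L\otimes\delta$. Your approach is more structural and avoids the somewhat delicate choice of a generic $s$ and the tracking of individual points; the paper's approach is more elementary in that it never leaves the language of linear systems on $C$ and needs no input from the theory of vector bundles on elliptic curves. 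Both use the hypothesis $L\notin W_{\delta,\mathrm{sing}}$ in the same way, to pin down $h^{0}=1$.
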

\begin{proof}
 If $L\in{B^{1}_{\delta}(k)}$, then
 $L\otimes\delta\cong\mathcal{O}_{C}(q_{1}+\dots+q_{n-2}+q+\sigma(q))$
 for some $q_{1},\dots,q_{n-2},q\in{C(k)}$.
 For $s\in\Pic^{0}{(E)}$, there is a point $q'\in{C(k)}$ such that $s=[\mathcal{O}_{E}(\phi(q')-\phi(q))]$.
 Since
 $L\otimes\delta\otimes\phi^{*}s
 \cong\mathcal{O}_{C}(q_{1}+\dots+q_{n-2}+q'+\sigma(q'))$,
 we have $h^{0}(C,L\otimes\delta\otimes\phi^{*}s)>0$ and $L\in{W_{\delta+\phi^{*}s}(k)}$.
 Hence the inclusion $B^{1}_{\delta}\subset{B_{\delta}}$ holds.\par
 For $L\in{B_{\delta}(k)\setminus{W_{\delta,\mathrm{sing}}(k)}}$, there is a unique $r_{1}+\dots+r_{n}\in{C^{(n)}(k)}$ such that $\Omega_{C}^{1}\otimes{L^{\vee}}\otimes\delta^{\vee}\cong\mathcal{O}_{C}(r_{1}+\dots+r_{n})$, because
 $h^{0}(C,\Omega_{C}^{1}\otimes{L^{\vee}}\otimes\delta^{\vee})
 =h^{0}(C,L\otimes\delta)=1$.
 Let $\Sigma\subset\Pic^{0}{(E)}$ be the finite subset defined by
 $$
 \Sigma=\{s\in\Pic^{0}{(E)}\mid{L\otimes\phi^{*}s\in
 \beta_{\delta}^{0}(\Delta^{(n)})}\},
 $$
 where $\Delta=\{\sigma(r_{1}),\dots,\sigma(r_{n})\}\subset{C}$ and $\Delta^{(n)}\subset{C^{(n)}}$.
 For $s\in\Pic^{0}{(E)}\setminus(\Sigma\cup\{0\})$, there is a divisor $q_{1}+\dots+q_{n}\in{C^{(n)}(k)}$ such that
 $L\otimes\delta\otimes\phi^{*}s\cong\mathcal{O}_{C}(q_{1}+\dots+q_{n})$,
 because $L\in{B_{\delta}(k)}\subset{W_{\delta+\phi^{*}s}(k)}$.
 Since $s\notin\Sigma$, we may assume that $q_{n}\notin\Delta$.
 The condition $\sigma(q_{n})\notin\{r_{1},\dots,r_{n}\}$ implies that
 $h^{0}(C,\Omega_{C}^{1}\otimes{L^{\vee}}\otimes
 \delta^{\vee}\otimes\mathcal{O}_{C}(-\sigma(q_{n})))=0$
 and
 $h^{0}(C,L\otimes\delta\otimes\mathcal{O}_{C}(\sigma(q_{n})))
 =1$.
 Let $\phi(q')\in{E(k)}$ be the point determined by $s=[\mathcal{O}_{E}(\phi(q_{n})-\phi(q'))]$.
 Then
 $$
 L\otimes\delta\otimes\mathcal{O}_{C}(\sigma(q_{n}))
 \cong\mathcal{O}_{C}(q_{1}+\dots+q_{n-1}+q'+\sigma(q')).
 $$
 Since $s\neq0$, we have
 $\sigma(q_{n})\in\{q_{1},\dots,q_{n-1}\}$ and $L\in{B^{1}_{\delta}(k)}$.
\end{proof}
\begin{lemma}\label{135618_16Aug18}
 The map
 $$
 \Pic^{0}{(E)}\longrightarrow|\mathcal{L}_{\delta}|;\,
 s\longmapsto{D_{\delta+\phi^{*}s}}
 $$
 is a double covering, and the base locus $\Bs{|\mathcal{L}_{\delta}|}$ of the linear system $|\mathcal{L}_{\delta}|$ is $B_{\delta}\cap{P}$, which is of dimension $n-2$.
\end{lemma}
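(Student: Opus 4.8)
The plan is to realize the map $s\mapsto D_{\delta+\phi^{*}s}$ as a degree-$2$ morphism from the elliptic curve $\Pic^{0}(E)$ onto the pencil $|\mathcal{L}_{\delta}|$, and then to read off the base locus from the surjectivity of this morphism. By Lemma~\ref{172812_1Dec18} we have $h^{0}(P,\mathcal{L}_{\delta})=2$, so $|\mathcal{L}_{\delta}|\cong\mathbf{P}^{1}$, and by Lemma~\ref{165029_6Aug18} every $D_{\delta+\phi^{*}s}$ lies in $|\mathcal{L}_{\delta}|$. First I would check that
$$
f:\Pic^{0}(E)\longrightarrow|\mathcal{L}_{\delta}|\cong\mathbf{P}^{1},\qquad s\longmapsto D_{\delta+\phi^{*}s},
$$
is a morphism: the theta divisors $W_{\delta+\phi^{*}s}=t_{-\phi^{*}s}(W_{\delta})$ are the translates of $W_{\delta}$ by $-\phi^{*}s$, so they form an algebraic family over $\Pic^{0}(E)$, and restricting this family to $P$ yields an algebraic family of effective divisors all lying in the fixed pencil $|\mathcal{L}_{\delta}|$, hence a morphism to $|\mathcal{L}_{\delta}|$. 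Moreover $f$ is non-constant, since by Lemma~\ref{184538_11Aug18} we have $D_{\delta}\neq D_{\delta+\phi^{*}s}$ for $s\notin\{0,\eta-N(\delta)\}$, and a non-constant morphism from a complete curve to $\mathbf{P}^{1}$ is surjective.

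To compute the degree of $f$ I would determine its fibers. Fixing $s_{0}\in\Pic^{0}(E)$ and applying Lemma~\ref{184538_11Aug18} with $\delta$ replaced by $\delta+\phi^{*}s_{0}$, together with $N(\delta+\phi^{*}s_{0})=N(\delta)+2s_{0}$, one obtains
$$
f^{-1}(f(s_{0}))=\{\,s\in\Pic^{0}(E)\mid D_{\delta+\phi^{*}s}=D_{\delta+\phi^{*}s_{0}}\,\}=\{\,s_{0},\ \eta-N(\delta)-s_{0}\,\}.
$$
Thus $f$ is invariant under the involution $\iota(s)=\eta-N(\delta)-s$ of $\Pic^{0}(E)$ and separates its orbits; since a general orbit has two elements (those $s_{0}$ with $2s_{0}\neq\eta-N(\delta)$ form the complement of a coset of $\Pic^{0}(E)_{2}$), the morphism $f$ has degree $2$, that is, $f$ is a double covering. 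Equivalently, $f$ is the quotient $\Pic^{0}(E)\to\Pic^{0}(E)/\langle\iota\rangle\cong\mathbf{P}^{1}$ followed by an isomorphism, and its four branch points are the images of the four fixed points of $\iota$.

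For the base locus, the surjectivity of $f$ shows that every member of $|\mathcal{L}_{\delta}|$ equals some $D_{\delta+\phi^{*}s}=W_{\delta+\phi^{*}s}\cap P$, whence
$$
\Bs|\mathcal{L}_{\delta}|=\bigcap_{s\in\Pic^{0}(E)}D_{\delta+\phi^{*}s}=\Bigl(\bigcap_{s\in\Pic^{0}(E)}W_{\delta+\phi^{*}s}\Bigr)\cap P=B_{\delta}\cap P.
$$
For the dimension, the inclusion $B_{\delta}\cap P\subseteq W_{\delta}\cap P=D_{\delta}$ gives $\dim(B_{\delta}\cap P)\leq n-2$. For the reverse inequality I would invoke Lemma~\ref{154952_6Aug18}: since $\dim W_{\delta,\mathrm{sing}}=n-3$, it is enough to see $\dim(B^{1}_{\delta}\cap P)=n-2$, and $B^{1}_{\delta}\cap P$ is the image under the generically finite morphism $\beta^{1}_{\delta}$ of the fiber $(N\circ\beta^{1}_{\delta})^{-1}(0)\subseteq C^{(n-2)}\times E$, which projects onto $C^{(n-2)}$ with finite fibers and so has dimension $n-2$; that it is not contained in $W_{\delta,\mathrm{sing}}$ is also part of Lemma~\ref{171048_17Sep18}. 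Hence $\dim(B_{\delta}\cap P)=n-2$.

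The only non-formal step is the fiber computation for $f$: the whole argument turns on combining Lemma~\ref{184538_11Aug18} with its covariance under translation by $\phi^{*}s_{0}$ (via $N(\phi^{*}s_{0})=2s_{0}$) to identify the fibers of $f$ with the orbits of $\iota$, which is what upgrades $f$ from a map that is merely generically $2:1$ to an honest double covering. The base-locus assertion is then bookkeeping, resting on the surjectivity of $f$ and on Lemmas~\ref{154952_6Aug18} and \ref{171048_17Sep18}.
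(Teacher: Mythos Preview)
Your argument follows the paper's line exactly: well-definedness from Lemma~\ref{165029_6Aug18}, the degree-$2$ claim from Lemma~\ref{184538_11Aug18} (you spell out the fiber $\{s_0,\eta-N(\delta)-s_0\}$ where the paper just invokes the lemma), the base-locus identification from surjectivity of $f$, and the dimension from Lemmas~\ref{154952_6Aug18} and~\ref{171048_17Sep18}.

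Two steps in your dimension count are looser than they should be. The inclusion $B_\delta\cap P\subseteq D_\delta$ gives only $\dim\leq n-1$; the bound $\leq n-2$ follows because $B_\delta\cap P$ is the base locus of a \emph{non-constant} pencil, hence lies in the intersection of two distinct members. More substantively, knowing that $\beta^1_\delta$ is generically finite on $C^{(n-2)}\times E$ does not by itself force its restriction to the particular fiber $(N\circ\beta^1_\delta)^{-1}(0)$ to be generically finite, so the image could in principle drop dimension. The paper sidesteps this by arguing on the target: $B^1_\delta$ is irreducible of dimension $n-1$ (image of an irreducible variety, and $B^1_\delta\not\subset W_{\delta,\mathrm{sing}}$ from Lemma~\ref{171048_17Sep18}), and the restriction $N|_{B^1_\delta}:B^1_\delta\to J(E)$ is surjective, so its fiber $B^1_\delta\cap P$ has dimension $n-2$. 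This is cleaner than tracking finiteness of $\beta^1_\delta$ along a specific subvariety.
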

\begin{proof}
 The map is well-defined by Lemma~\ref{165029_6Aug18}.
 Since $\dim{|\mathcal{L}_{\delta}|}=1$, it is a double covering by Lemma~\ref{184538_11Aug18}.
 Hence we have
 $$
 \Bs{|\mathcal{L}_{\delta}|}=\bigcap_{s\in\Pic^{0}{(E)}}D_{\delta+\phi^{*}s}=B_{\delta}\cap{P}.
 $$
 By Lemma~\ref{171048_17Sep18}, $B^{1}_{\delta}$ is irreducible of dimension $n-1$.
 Since the restriction of the Norm map $N\vert_{B^{1}_{\delta}}:B^{1}_{\delta}\rightarrow{J(E)}$ is surjective, we have $\dim{B^{1}_{\delta}\cap{P}}=n-2$, hence $\dim{B_{\delta}\cap{P}}=n-2$ by Lemma~\ref{154952_6Aug18}.
\end{proof}
\begin{lemma}\label{135426_16Aug18}
 Let $\mathcal{L}$ be an ample invertible sheaf which represents the polarization $\lambda_{P}$ on $P$, then there is $\delta\in\Pic^{n}{(C)}$ such that $N(\delta)=\eta$ and $\mathcal{L}\cong\mathcal{L}_{\delta}$.
\end{lemma}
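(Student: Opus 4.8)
The plan is to prove the statement in two stages. First I would show that \emph{every} ample sheaf representing $\lambda_{P}$ is of the form $\mathcal{L}_{\delta}$ for some $\delta\in\Pic^{n}(C)$; then, exploiting the freedom to replace $\delta$ by $\delta+\phi^{*}s$ with $s\in\Pic^{0}(E)$, I would correct the norm so that $N(\delta)=\eta$.

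For the first stage I would fix any $\delta_{0}\in\Pic^{n}(C)$, so that $\mathcal{L}_{\delta_{0}}=\mathcal{O}_{J(C)}(W_{\delta_{0}})|_{P}$ represents $\lambda_{P}$. Since $\mathcal{L}$ and $\mathcal{L}_{\delta_{0}}$ both represent $\lambda_{P}$, they differ by an element of $\Pic^{0}(P)$, so it suffices to realize $\mathcal{L}\otimes\mathcal{L}_{\delta_{0}}^{\vee}$ as $\mathcal{L}_{\delta_{0}+x}\otimes\mathcal{L}_{\delta_{0}}^{\vee}$ for a suitable $x\in\Pic^{0}(C)$. The equivalence $h^{0}(C,L\otimes\delta_{0}\otimes x)>0\Leftrightarrow L+x\in W_{\delta_{0}}$ shows that $W_{\delta_{0}+x}=t_{-x}(W_{\delta_{0}})$ as divisors on $J(C)$, hence $\mathcal{O}_{J(C)}(W_{\delta_{0}+x})\cong t_{x}^{*}\mathcal{O}_{J(C)}(W_{\delta_{0}})$, and therefore
\[
\mathcal{L}_{\delta_{0}+x}\otimes\mathcal{L}_{\delta_{0}}^{\vee}\cong
\bigl(t_{x}^{*}\mathcal{O}_{J(C)}(W_{\delta_{0}})\otimes\mathcal{O}_{J(C)}(-W_{\delta_{0}})\bigr)\big|_{P}=\lambda_{C}(x)\big|_{P}.
\]
Thus I need the homomorphism $J(C)\to\Pic^{0}(P)$, $x\mapsto\lambda_{C}(x)|_{P}$, to be surjective. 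Now $\lambda_{C}:J(C)\to J(C)^{\vee}=\Pic^{0}(J(C))$ is an isomorphism, being the principal polarization of the Jacobian; and by Lemma~\ref{172616_1Dec18} the kernel $P$ of $N$ is reduced and connected, hence an abelian subvariety of $J(C)$, so the restriction map $\Pic^{0}(J(C))\to\Pic^{0}(P)$ is surjective (it is dual to the closed immersion $P\hookrightarrow J(C)$). Composing, I obtain $\delta'\in\Pic^{n}(C)$ with $\mathcal{L}_{\delta'}\cong\mathcal{L}$.

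For the second stage, by Lemma~\ref{165029_6Aug18} we have $D_{\delta'+\phi^{*}s}\in|\mathcal{L}_{\delta'}|$ for every $s\in\Pic^{0}(E)$, so $\mathcal{L}_{\delta'+\phi^{*}s}\cong\mathcal{L}_{\delta'}\cong\mathcal{L}$ for all such $s$. Since $N\circ\phi^{*}$ is multiplication by $2$ on $J(E)$, we get $N(\delta'+\phi^{*}s)=N(\delta')+2s$. As $\eta-N(\delta')\in\Pic^{0}(E)=J(E)(k)$ and $J(E)$ is an abelian variety over the algebraically closed field $k$, there is $s\in\Pic^{0}(E)$ with $2s=\eta-N(\delta')$; setting $\delta=\delta'+\phi^{*}s$ then yields $N(\delta)=\eta$ together with $\mathcal{L}_{\delta}\cong\mathcal{L}$, which is the assertion.

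The essential step is the displayed identification, which converts the problem into the surjectivity of the restriction $\Pic^{0}(J(C))\to\Pic^{0}(P)$; the remainder is bookkeeping with twists by $\phi^{*}\Pic^{0}(E)$ and the $2$-divisibility of $J(E)$. The only point requiring care is keeping the translations straight, i.e. verifying that $x\mapsto\mathcal{L}_{\delta_{0}+x}\otimes\mathcal{L}_{\delta_{0}}^{\vee}$ is exactly $\lambda_{C}$ followed by restriction to $P$.
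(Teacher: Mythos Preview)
Your proof is correct and follows essentially the same two-stage approach as the paper: first realize $\mathcal{L}$ as some $\mathcal{L}_{\delta'}$, then twist by $\phi^{*}s$ with $2s=\eta-N(\delta')$ to arrange $N(\delta)=\eta$. The only minor difference is that the paper obtains the shift $x$ directly in $P(k)$ via the surjectivity of the isogeny $\lambda_{P}:P\to P^{\vee}$ (writing $\mathcal{L}\cong t_{x}^{*}\mathcal{L}_{\delta'}\cong\mathcal{L}_{\delta'+x}$), whereas you take $x\in J(C)(k)$ using the surjectivity of the restriction $\Pic^{0}(J(C))\to\Pic^{0}(P)$; both routes are valid and lead to the same conclusion.
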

\begin{proof}
 For any $\delta'\in\Pic^{n}{(C)}$, we have $\mathcal{L}\otimes\mathcal{L}_{\delta'}^{\vee}\in\Pic^{0}{(P)}$, because $\mathcal{L}_{\delta'}$ gives the same polarization as $\lambda_{P}$.
 Then $\mathcal{L}\cong{t_{x}^{*}\mathcal{L}_{\delta'}}\cong\mathcal{L}_{\delta'+x}$ for some $x\in{P(k)}$.
 Let $s\in\Pic^{0}{(E)}$ be a point with $2s=\eta-N(\delta'+x)$.
 For $\delta=\delta'+x+\phi^{*}s$, we have $N(\delta)=\eta$ and $\mathcal{L}\cong\mathcal{L}_{\delta}$
\end{proof}
For an ample invertible sheaf $\mathcal{L}$ which represents the polarization $\lambda_{P}$, we set a subset in the linear system $|\mathcal{L}|$ by
$$
\Pi_{\mathcal{L}}=
\{D\in|\mathcal{L}|\mid{t_{x}(D)=D}\
\text{for some}\
x\in{K(P)\setminus\{0\}}\},
$$
where $K(P)$ is the kernel of the polarization $\lambda_{P}$.
\begin{lemma}\label{142326_6Oct18}
 $\sharp{\Pi_{\mathcal{L}}}=6$.
\end{lemma}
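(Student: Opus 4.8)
The plan is to reduce, using Lemma~\ref{135426_16Aug18}, to the case $\mathcal{L}=\mathcal{L}_{\delta}$ with $N(\delta)=\eta$; since $\Pi_{\mathcal{L}}$ depends only on $(P,\lambda_{P})$ this is harmless. Then I would transport the question to the elliptic curve $\Pic^{0}(E)$ via the double covering $\pi\colon\Pic^{0}(E)\to|\mathcal{L}_{\delta}|$, $s\mapsto D_{\delta+\phi^{*}s}$, of Lemma~\ref{135618_16Aug18}, and first determine its covering involution $\iota$. Applying Lemma~\ref{184538_11Aug18} with $\delta+\phi^{*}s$ in place of $\delta$, one finds $D_{\delta+\phi^{*}s}=D_{\delta+\phi^{*}s'}$ if and only if $s'-s=0$ or $s'-s=\eta-N(\delta+\phi^{*}s)$; the latter equals $-2s$ because $N(\delta)=\eta$ and $N(\phi^{*}s)=2s$, so $\iota(s)=-s$ and $\pi$ identifies $|\mathcal{L}_{\delta}|$ with $\Pic^{0}(E)/\{\pm1\}$.

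Next I would describe the action of $K(P)$ on $|\mathcal{L}_{\delta}|$. By Lemma~\ref{172812_1Dec18} we have $K(P)=\phi^{*}J(E)_{2}$ and $\sharp K(P)=4$, and by Lemma~\ref{172616_1Dec18} the map $\phi^{*}$ is injective on $\Pic^{0}(E)$, so the three nonzero elements of $K(P)$ are exactly the $x=\phi^{*}\tau$ with $\tau$ ranging over $J(E)_{2}\setminus\{0\}$. Using $t_{x}(W_{\gamma})=W_{\gamma-x}$ and the fact that $t_{x}$ preserves $P$ (as $x\in P$), one gets $t_{x}(D_{\gamma})=D_{\gamma-x}$ for every $\gamma\in\Pic^{n}(C)$, since $D_{\gamma}=W_{\gamma}\cap P$; hence
\[
t_{x}\bigl(\pi(s)\bigr)=D_{\delta+\phi^{*}(s-\tau)}=D_{\delta+\phi^{*}(s+\tau)}=\pi(s+\tau),
\]
the middle equality because $\tau$ is $2$-torsion. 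So $t_{x}$ acts on $|\mathcal{L}_{\delta}|=\Pic^{0}(E)/\{\pm1\}$ as the map induced by the translation $s\mapsto s+\tau$.

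It then follows that a member $\pi(s)$ belongs to $\Pi_{\mathcal{L}}$ precisely when $s+\tau\in\{s,-s\}$ for some $\tau\in J(E)_{2}\setminus\{0\}$; as $\tau\neq0$ this is equivalent to $2s=-\tau=\tau$, i.e.\ to $s\in J(E)_{4}\setminus J(E)_{2}$. Hence $\Pi_{\mathcal{L}}=\pi\bigl(J(E)_{4}\setminus J(E)_{2}\bigr)$. Since the characteristic is $\neq2$ we have $\sharp\bigl(J(E)_{4}\setminus J(E)_{2}\bigr)=16-4=12$, and the restriction of $\pi$ to this set is exactly $2$-to-$1$: the fibre of $\pi$ through $s$ is $\{s,-s\}$, and $-s=s$ would force $s\in J(E)_{2}$. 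Therefore $\sharp\Pi_{\mathcal{L}}=12/2=6$.

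The only point needing real care is the second step — correctly identifying the covering involution of $\pi$ and checking that translation by a point of $K(P)$ descends, on the pencil $|\mathcal{L}_{\delta}|$, to translation by a $2$-torsion point of $\Pic^{0}(E)$; one should also note that $t_{x}$ does preserve the pencil, since $t_{x}^{*}\mathcal{L}_{\delta}\cong\mathcal{L}_{\delta}$ for $x\in K(P)$. Once the $K(P)$-action on $\Pic^{0}(E)/\{\pm1\}$ is pinned down, the count is immediate.
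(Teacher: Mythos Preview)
Your argument is correct and is essentially the same as the paper's: both reduce via Lemma~\ref{135426_16Aug18} to $\mathcal{L}=\mathcal{L}_{\delta}$ with $N(\delta)=\eta$, use Lemma~\ref{184538_11Aug18} to identify the covering involution of $s\mapsto D_{\delta+\phi^{*}s}$ as $s\mapsto -s$, compute that translation by $\phi^{*}\tau\in K(P)$ acts as $s\mapsto s-\tau$ on the parameter, and conclude $\Pi_{\mathcal{L}}=\{D_{\delta+\phi^{*}s}\mid s\in J(E)_{4}\setminus J(E)_{2}\}$ with the count $12/2=6$. One small imprecision: $\Pi_{\mathcal{L}}\subset|\mathcal{L}|$ does depend on $\mathcal{L}$, but since $\mathcal{L}\cong\mathcal{L}_{\delta}$ gives a canonical identification $|\mathcal{L}|=|\mathcal{L}_{\delta}|$ the reduction is harmless, as you say.
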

\begin{proof}
 By Lemma~\ref{135426_16Aug18}, there is $\delta\in\Pic^{n}{(C)}$ such that $N(\delta)=\eta$ and $\mathcal{L}\cong\mathcal{L}_{\delta}$.
 For any $D\in|\mathcal{L}_{\delta}|$, by Lemma~\ref{135618_16Aug18}, there is $s\in\Pic^{0}{(E)}$ such that $D=D_{\delta+\phi^{*}s}$.
 If $D_{\delta+\phi^{*}s}\in{\Pi_{\mathcal{L}_{\delta}}}$, then by Lemma~\ref{172812_1Dec18}, there is $t\in{J(E)_{2}}\setminus\{0\}$ such that $t_{\phi^{*}t}(D_{\delta+\phi^{*}s})=D_{\delta+\phi^{*}s}$.
 Since $t_{\phi^{*}t}(D_{\delta+\phi^{*}s})=D_{\delta+\phi^{*}(s-t)}$ and $t\neq0$, by Lemma~\ref{184538_11Aug18}, we have
 $$
 \delta+\phi^{*}(s-t)=\delta+\phi^{*}s+\phi^{*}(\eta-N(\delta+\phi^{*}s))
 =\delta-\phi^{*}s,
 $$
 hence $t=2s$ by Lemma~\ref{172616_1Dec18}.
 It means that
 $$
 \Pi_{\mathcal{L}_{\delta}}=
 \{D_{\delta+\phi^{*}s}\in|\mathcal{L}_{\delta}|\mid
 s\in{J(E)_{4}}\setminus{J(E)_{2}}\}.
 $$
 Since $\sharp(J(E)_{4}\setminus{J(E)_{2}})=12$ and $D_{\delta+\phi^{*}s}=D_{\delta-\phi^{*}s}$, we have $\sharp{\Pi_{\mathcal{L}}}=6$.
\end{proof}
\section{Gauss maps}\label{154736_28Oct18}
\subsection{Gauss map for Jacobian and Gauss map for Prym}
Let
$$
\Psi_{J(C),\delta}:W_{\delta}\setminus{W_{\delta,\mathrm{sing}}}\longrightarrow
\mathbf{P}(H^{0}(C,\Omega_{C}^{1})^{\vee})
=\Grass{(n,H^{0}(C,\Omega_{C}^{1})^{\vee})}
$$
be the Gauss map for the subvariety $W_{\delta}\subset{J(C)}$.
For $L\in{W_{\delta}(k)}\setminus{W_{\delta,\mathrm{sing}}(k)}$, the tangent space $T_{L}(W_{\delta})$ of $W_{\delta}$ at $L$ defines the image $\Psi_{J(C),\delta}(L)$ by the natural identifications
$$
T_{L}(W_{\delta})\subset
T_{L}(J(C))\cong(\Omega_{J(C)}^{1}(L))^{\vee}\cong
H^{0}(J(C),\Omega_{J(C)}^{1})^{\vee}\cong
H^{0}(C,\Omega_{C}^{1})^{\vee}.
$$
\begin{lemma}\label{162111_7Aug18}
 For $L\in{W_{\delta}(k)\setminus{W_{\delta,\mathrm{sing}}(k)}}$, the image
 $\Psi_{J(C),\delta}(L)$ of the Gauss map is identified with the canonical divisor
 $$
 q_{1}+\dots+q_{n}+q'_{1}+\dots+q'_{n}\in|\Omega_{C}^{1}|=
 \Grass{(1,H^{0}(C,\Omega_{C}^{1}))}\cong
 \mathbf{P}(H^{0}(C,\Omega_{C}^{1})^{\vee}),
 $$
 where the effective divisors $q_{1}+\dots+q_{n}$ and $q'_{1}+\dots+q'_{n}$ are uniquely determined by $L\otimes\delta\cong\mathcal{O}_{C}(q_{1}+\dots+q_{n})$ and $\Omega_{C}^{1}\otimes{L^{\vee}}\otimes\delta^{\vee}\cong\mathcal{O}_{C}(q'_{1}+\dots+q'_{n})$.
\end{lemma}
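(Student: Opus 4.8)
The plan is to reduce the statement to the classical description of the tangent space of a theta divisor via the Abel--Jacobi map. Recall from the previous section that $W_{\delta}=\Image(\beta^{0}_{\delta})$, where $\beta^{0}_{\delta}:C^{(n)}\to J(C)$ sends $q_{1}+\dots+q_{n}$ to $[\mathcal{O}_{C}(q_{1}+\dots+q_{n})\otimes\delta^{\vee}]$. For $L\in W_{\delta}(k)\setminus W_{\delta,\mathrm{sing}}(k)$ one has $h^{0}(C,L\otimes\delta)=1$, and by Serre duality together with Riemann--Roch (note $\deg(L\otimes\delta)=n=g-1$) also $h^{0}(C,\Omega_{C}^{1}\otimes L^{\vee}\otimes\delta^{\vee})=h^{1}(C,L\otimes\delta)=1$; hence the effective divisors $D=q_{1}+\dots+q_{n}$ and $D'=q'_{1}+\dots+q'_{n}$ in the statement are well defined, $D$ is the unique point of $C^{(n)}(k)$ over $L$, and $\beta^{0}_{\delta}$ is unramified at $D$ (its differential there has kernel the tangent space to the linear system $|L\otimes\delta|=\{D\}$, which is trivial). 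Since $W_{\delta}$ is smooth of dimension $n$ at $L$, the $n$-dimensional subspace $\Image(d\beta^{0}_{\delta}|_{D})\subset T_{L}(J(C))\cong H^{0}(C,\Omega_{C}^{1})^{\vee}$ must equal $T_{L}(W_{\delta})$, which by definition is $\Psi_{J(C),\delta}(L)$.

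It then remains to compute $\Image(d\beta^{0}_{\delta}|_{D})$. By the standard description of the differential of the Abel--Jacobi map, this is the connecting homomorphism $H^{0}(C,\mathcal{O}_{D}(D))\to H^{1}(C,\mathcal{O}_{C})$ of the exact sequence $0\to\mathcal{O}_{C}\to\mathcal{O}_{C}(D)\to\mathcal{O}_{D}(D)\to0$, composed with the identification $H^{1}(C,\mathcal{O}_{C})\cong H^{0}(C,\Omega_{C}^{1})^{\vee}$; since $h^{0}(C,\mathcal{O}_{C}(D))=1$ this map is injective, and dualizing $0\to H^{0}(C,\Omega_{C}^{1}(-D))\to H^{0}(C,\Omega_{C}^{1})\to H^{0}(C,\Omega_{C}^{1}\otimes\mathcal{O}_{D})$ against Serre duality identifies $\Image(d\beta^{0}_{\delta}|_{D})$ with the annihilator of $H^{0}(C,\Omega_{C}^{1}(-D))$ inside $H^{0}(C,\Omega_{C}^{1})^{\vee}$. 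This is nothing but the tangent-space computation for Brill--Noether loci, which I would recall in a line or two or else cite from \cite{AM}. Now $h^{0}(C,\Omega_{C}^{1}(-D))=h^{1}(C,\mathcal{O}_{C}(D))=h^{0}(C,\mathcal{O}_{C}(D))=1$, so $H^{0}(C,\Omega_{C}^{1}(-D))$ is the line spanned by a single global differential $\omega$, whose divisor satisfies $(\omega)\geq D$, hence $(\omega)=D+D'$ with $\mathcal{O}_{C}(D')\cong\Omega_{C}^{1}\otimes\mathcal{O}_{C}(-D)\cong\Omega_{C}^{1}\otimes L^{\vee}\otimes\delta^{\vee}$; by $h^{0}(C,\Omega_{C}^{1}\otimes L^{\vee}\otimes\delta^{\vee})=1$ this $D'$ is exactly the divisor $q'_{1}+\dots+q'_{n}$ of the statement.

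Putting this together: under the identifications fixed just before the lemma, $\Grass(n,H^{0}(C,\Omega_{C}^{1})^{\vee})\cong\mathbf{P}(H^{0}(C,\Omega_{C}^{1})^{\vee})\cong\Grass(1,H^{0}(C,\Omega_{C}^{1}))=|\Omega_{C}^{1}|$, where a hyperplane of $H^{0}(C,\Omega_{C}^{1})^{\vee}$ corresponds to its annihilator line in $H^{0}(C,\Omega_{C}^{1})$. The hyperplane $\Psi_{J(C),\delta}(L)=\Image(d\beta^{0}_{\delta}|_{D})$, being the annihilator of $H^{0}(C,\Omega_{C}^{1}(-D))=\langle\omega\rangle$, therefore corresponds to the line $\langle\omega\rangle$, i.e.\ to the point $[\omega]=(\omega)=q_{1}+\dots+q_{n}+q'_{1}+\dots+q'_{n}$ of $|\Omega_{C}^{1}|$, as claimed. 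The one step that requires genuine care is the bookkeeping of the canonical isomorphisms---$T_{L}(J(C))\cong H^{0}(J(C),\Omega_{J(C)}^{1})^{\vee}\cong H^{0}(C,\Omega_{C}^{1})^{\vee}$, Serre duality $H^{1}(C,\mathcal{O}_{C})\cong H^{0}(C,\Omega_{C}^{1})^{\vee}$, and the duality between $\Grass(n,-)$ and $\Grass(1,-)$---so that ``annihilator of $\langle\omega\rangle$'' indeed translates into ``the canonical divisor $(\omega)$''; the geometric content (the Brill--Noether tangent-space formula and Riemann--Roch) is entirely standard.
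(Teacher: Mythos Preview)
Your argument is correct and is precisely the classical computation of the tangent hyperplane to the theta divisor via the differential of the Abel--Jacobi map; this is exactly the content of Proposition~(4.2) in Chapter~IV of \cite{ACGH}, which is what the paper simply cites without reproducing. So you have given the proof that the paper defers to the reference, and there is no substantive difference in approach.
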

\begin{proof}
 It is a special case of Proposition~(4.2) in \cite[Chapter~IV]{ACGH}.
\end{proof}
\begin{lemma}\label{115339_19Aug18}
 Let $K\in|\Omega_{C}^{1}|$ be an effective canonical divisor.
 If $q_{1}+\sigma(q_{1})\leq{K}$ for some $q_{1}\in{C(k)}$, then $K=\sum_{i=1}^{n}(q_{i}+\sigma(q_{i}))$ for some $q_{2},\dots,q_{n}\in{C(k)}$.
\end{lemma}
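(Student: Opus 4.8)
The plan is to use the fact that the canonical bundle is a pullback, $\Omega_{C}^{1}\cong\phi^{*}\eta$, so that after stripping off one fibre the residual linear system consists only of pullbacks from $E$.

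First I would reduce the statement to one about a degree‑$(n-1)$ line bundle on $E$. Put $p_{1}=\phi(q_{1})\in E(k)$; as divisors on $C$ we have $q_{1}+\sigma(q_{1})=\phi^{*}p_{1}$, an equality that remains valid when $q_{1}$ is a ramification point (both sides are then $2q_{1}$). Since $q_{1}+\sigma(q_{1})\leq K$, we may write $K=\phi^{*}p_{1}+K'$ with $K'$ effective of degree $2n-2$ and
$$
\mathcal{O}_{C}(K')\cong\Omega_{C}^{1}\otimes\mathcal{O}_{C}(-\phi^{*}p_{1})\cong\phi^{*}\eta_{1},\qquad\eta_{1}:=\eta\otimes\mathcal{O}_{E}(-p_{1}),\ \deg\eta_{1}=n-1.
$$
It therefore suffices to show that every effective divisor in $|\phi^{*}\eta_{1}|$ is of the form $\phi^{*}(p_{2}+\dots+p_{n})$ with $p_{i}\in E(k)$: then, choosing $q_{i}\in\phi^{-1}(p_{i})$, we obtain $K=\phi^{*}(p_{1}+\dots+p_{n})=\sum_{i=1}^{n}(q_{i}+\sigma(q_{i}))$.

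The key step is the cohomology of $\phi^{*}\eta_{1}$. Since $\eta$ is the invertible sheaf determining the double covering $\phi$, we have $\phi_{*}\mathcal{O}_{C}\cong\mathcal{O}_{E}\oplus\eta^{\vee}$, and the projection formula gives
$$
H^{0}(C,\phi^{*}\eta_{1})\cong H^{0}(E,\eta_{1})\oplus H^{0}(E,\eta_{1}\otimes\eta^{\vee})=H^{0}(E,\eta_{1})\oplus H^{0}(E,\mathcal{O}_{E}(-p_{1})).
$$
The last summand vanishes because $\mathcal{O}_{E}(-p_{1})$ has negative degree, and the first summand is exactly the image of the pullback $\phi^{*}\colon H^{0}(E,\eta_{1})\to H^{0}(C,\phi^{*}\eta_{1})$ (coming from the structural inclusion $\mathcal{O}_{E}\hookrightarrow\phi_{*}\mathcal{O}_{C}$). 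Hence every section of $\phi^{*}\eta_{1}$ is pulled back from $E$, so every effective member of $|\phi^{*}\eta_{1}|$ is $\phi^{*}D'$ for an effective $D'\in|\eta_{1}|$; since $\deg\eta_{1}=n-1>0$, such a $D'$ is a sum $p_{2}+\dots+p_{n}$ of $n-1$ points of $E$. Applying this to $K'$ finishes the argument.

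I do not expect a genuine obstacle. The two points that need a little care are: (i) identifying the summand $H^{0}(E,\eta_{1})$ in the projection‑formula decomposition with honest pullback sections, so that ``all sections of $\phi^{*}\eta_{1}$ are pulled back'' really forces ``the divisor $K'$ is pulled back''; and (ii) checking at the ramification points that $q_{1}+\sigma(q_{1})=\phi^{*}p_{1}$ holds as an identity of divisors, so that $K'$ is honestly effective and the concluding identity $K=\sum_{i=1}^{n}(q_{i}+\sigma(q_{i}))$ comes out as an equality of divisors rather than merely a linear equivalence.
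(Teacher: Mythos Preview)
Your proof is correct and takes a genuinely different route from the paper's. The paper argues by induction: assuming $K=\sum_{i=1}^{m}(q_{i}+\sigma(q_{i}))+q+\sum_{j}r_{j}$ with $1\leq m\leq n-1$, it shows that $\sigma(q)$ must appear among the $r_{j}$, using Clifford's theorem together with the non-hyperellipticity of $C$ (Lemma~\ref{123518_3Aug18}) to pin down $h^{0}\bigl(C,\mathcal{O}_{C}(\sum_{i=1}^{m}(q_{i}+\sigma(q_{i}))+q+\sigma(q))\bigr)$ and related spaces exactly, and treating the boundary case $m=n-1$ separately. Your argument, by contrast, is a one-shot computation: subtracting one fibre reduces to $\phi^{*}\eta_{1}$ with $\eta_{1}=\eta\otimes\mathcal{O}_{E}(-p_{1})$, and the projection formula for the double cover $\phi_{*}\mathcal{O}_{C}\cong\mathcal{O}_{E}\oplus\eta^{\vee}$ gives $H^{0}(C,\phi^{*}\eta_{1})\cong H^{0}(E,\eta_{1})$ because the anti-invariant summand $H^{0}(E,\mathcal{O}_{E}(-p_{1}))$ vanishes. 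Your approach is shorter and more conceptual, and it does not explicitly invoke non-hyperellipticity (indeed it goes through already for $n\geq 2$); the paper's approach is more hands-on and makes visible exactly which $h^{0}$-computations drive the conclusion. The two caveats you flag are both fine: the summand $H^{0}(E,\eta_{1})$ in the decomposition is the $\sigma$-invariant part of $H^{0}(C,\phi^{*}\eta_{1})$, hence genuinely consists of pullback sections, so the divisor $K'$ is honestly $\phi^{*}D'$; and $\phi^{*}p_{1}=q_{1}+\sigma(q_{1})$ holds as an equality of divisors whether or not $q_{1}$ is a ramification point.
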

\begin{proof}
 When
 $$
 K=\sum_{i=1}^{m}(q_{i}+\sigma(q_{i}))+q+\sum_{j=1}^{2n-2m-1}r_{j}
 $$
 for $1\leq{m}\leq{n-1}$,
 we show that $\sigma(q)\in\{r_{1},\dots,r_{2n-2m-1}\}$.
 First we assume that $1\leq{m}\leq{n-2}$.
 Since $C$ is not a hyperelliptic curve, by Clifford's theorem, we have
 $$
 m+2>
 h^{0}(C,\mathcal{O}_{C}(\sum_{i=1}^{m}(q_{i}+\sigma(q_{i}))+q+\sigma(q)))
 \geq
 h^{0}(E,\mathcal{O}_{E}(\sum_{i=1}^{m}\phi(q_{i})+\phi(q)))=m+1
 $$
 and
 $$
 m+1>
 h^{0}(C,\mathcal{O}_{C}(\sum_{i=1}^{m}(q_{i}+\sigma(q_{i}))))
 \geq
 h^{0}(E,\mathcal{O}_{E}(\sum_{i=1}^{m}\phi(q_{i})))=m,
 $$
 hence $h^{0}(C,\mathcal{O}_{C}(\sum_{i=1}^{m}(q_{i}+\sigma(q_{i}))+q+\sigma(q)))={m+1}$ and $h^{0}(C,\mathcal{O}_{C}(\sum_{i=1}^{m}(q_{i}+\sigma(q_{i}))))=m$.
 Since $\sigma(q)$ is not a base point of
 $|\mathcal{O}_{C}(\sum_{i=1}^{m}(q_{i}+\sigma(q_{i}))+q+\sigma(q))|
 =\phi^{*}|\mathcal{O}_{E}(\sum_{i=1}^{m}\phi(q_{i})+\phi(q))|$,
 we have
 $$
 m=
 h^{0}(C,\mathcal{O}_{C}(\sum_{i=1}^{m}(q_{i}+\sigma(q_{i}))+q))
 <
 h^{0}(C,\mathcal{O}_{C}(\sum_{i=1}^{m}(q_{i}+\sigma(q_{i}))+q+\sigma(q)))
 =m+1,
 $$
 hence
 $$
 h^{0}(C,\mathcal{O}_{C}(\sum_{j=1}^{2n-2m-1}r_{j}-\sigma(q)))
 =h^{0}(C,\mathcal{O}_{C}(\sum_{j=1}^{2n-2m-1}r_{j}))
 =n-m-1.
 $$
 It implies that $\sigma(q)\leq\sum_{j=1}^{2n-2m-1}r_{j}$.
 We consider the case $m=n-1$.
 Let $\eta\in\Pic^{n}{(E)}$ be the invertible sheaf with $\phi^{*}\eta\cong\Omega_{C}^{1}$.
 There is a point $q'\in{C(k)}$ such that $\sum_{j=1}^{n-1}\phi(q_{i})+\phi(q')\in|\eta|$.
 Then $\mathcal{O}_{C}(q+r_{1})\cong\mathcal{O}_{C}(q'+\sigma(q'))$.
 Since $C$ is not a hyperelliptic curve, we have $q+r_{1}=q'+\sigma(q')$ and $\sigma(q)=r_{1}$.
\end{proof}
By the injective homomorphism
$$
H^{0}(E,\eta)\longrightarrow
H^{0}(C,\phi^{*}\eta)\cong
H^{0}(C,\Omega_{C}^{1}),
$$
we have the closed immersion
$$
\iota:\,\mathbf{P}(H^{0}(E,\eta)^{\vee})\longrightarrow
\mathbf{P}(H^{0}(C,\Omega_{C}^{1})^{\vee}).
$$
\begin{lemma}\label{151417_19Aug18}
 For $L\in{B_{\delta}(k)}\setminus{W_{\delta,\mathrm{sing}}(k)}$,
 $$
 \Psi_{J(C),\delta}(L)\in\iota(\mathbf{P}(H^{0}(E,\eta)^{\vee}))\subset
 \mathbf{P}(H^{0}(C,\Omega_{C}^{1})^{\vee}).
 $$
\end{lemma}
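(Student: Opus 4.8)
The plan is to compute $\Psi_{J(C),\delta}(L)$ by means of Lemma~\ref{162111_7Aug18}, to use the description of $B_{\delta}\setminus W_{\delta,\mathrm{sing}}$ in Lemma~\ref{154952_6Aug18} to see that the resulting canonical divisor contains a $\sigma$-conjugate pair $q+\sigma(q)$, and then to invoke Lemma~\ref{115339_19Aug18} to upgrade this to the statement that the whole canonical divisor is a pullback from $E$.

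First, since $L\in B_{\delta}(k)\setminus W_{\delta,\mathrm{sing}}(k)\subset W_{\delta}(k)\setminus W_{\delta,\mathrm{sing}}(k)$, the Gauss image $\Psi_{J(C),\delta}(L)$ is defined and $h^{0}(C,L\otimes\delta)=1$. By Lemma~\ref{154952_6Aug18} we have $L\in B^{1}_{\delta}(k)$, so $L\otimes\delta\cong\mathcal{O}_{C}(q_{1}+\dots+q_{n-2})\otimes\phi^{*}\mathcal{O}_{E}(p)$ for some $q_{1},\dots,q_{n-2}\in C(k)$ and $p\in E(k)$. Writing the pullback divisor $\phi^{*}p=q+\sigma(q)$ on $C$ (so $q$ is a ramification point when $p$ is a branch point), we obtain $L\otimes\delta\cong\mathcal{O}_{C}(q_{1}+\dots+q_{n-2}+q+\sigma(q))$, and since $h^{0}(C,L\otimes\delta)=1$ this is the unique effective divisor in $|L\otimes\delta|$, hence the divisor appearing in Lemma~\ref{162111_7Aug18}. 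Therefore the canonical divisor $K:=\Psi_{J(C),\delta}(L)\in|\Omega_{C}^{1}|$ satisfies $q+\sigma(q)\leq K$.

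Next I would apply Lemma~\ref{115339_19Aug18} with this point $q$: from $q+\sigma(q)\leq K$ we get $K=\sum_{i=1}^{n}(r_{i}+\sigma(r_{i}))$ for suitable $r_{1},\dots,r_{n}\in C(k)$, that is, $K=\phi^{*}D$ with $D=\phi(r_{1})+\dots+\phi(r_{n})\in E^{(n)}$. From $\phi^{*}\mathcal{O}_{E}(D)\cong\mathcal{O}_{C}(K)\cong\Omega_{C}^{1}\cong\phi^{*}\eta$ and the injectivity of $\phi^{*}$ on $\Pic(E)$ (Lemma~\ref{172616_1Dec18}, together with the fact that $\phi^{*}$ multiplies degrees by $2$) we deduce $\mathcal{O}_{E}(D)\cong\eta$, so $D\in|\eta|$. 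Finally, unwinding the definition of $\iota$: under the identification $|\Omega_{C}^{1}|=\mathbf{P}(H^{0}(C,\Omega_{C}^{1})^{\vee})$ of Lemma~\ref{162111_7Aug18}, its image $\iota(\mathbf{P}(H^{0}(E,\eta)^{\vee}))$ is precisely the set of canonical divisors $\mathrm{div}(\phi^{*}t)$ with $0\neq t\in H^{0}(E,\eta)$, i.e. the set $\{\phi^{*}D'\mid D'\in|\eta|\}$. Since $K=\phi^{*}D$ with $D\in|\eta|$, this gives $\Psi_{J(C),\delta}(L)\in\iota(\mathbf{P}(H^{0}(E,\eta)^{\vee}))$.

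The only substantive ingredient here is Lemma~\ref{115339_19Aug18}, which is already established; everything else is bookkeeping with divisor classes and the identification of the Gauss image with a canonical divisor. Accordingly I do not expect a real obstacle beyond putting these pieces together in the right order — the one point to watch is the possibility that $p$ is a branch point, but Lemma~\ref{115339_19Aug18} is phrased so as to include the case $\sigma(q)=q$.
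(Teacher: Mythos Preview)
Your proposal is correct and follows essentially the same route as the paper: both use Lemma~\ref{162111_7Aug18} to identify $\Psi_{J(C),\delta}(L)$ with a canonical divisor, invoke Lemma~\ref{154952_6Aug18} to produce a $\sigma$-conjugate pair in it, and then apply Lemma~\ref{115339_19Aug18} to conclude the divisor is pulled back from $|\eta|$. Your write-up is simply more explicit about the final identification of $\iota(\mathbf{P}(H^{0}(E,\eta)^{\vee}))$ with pullback divisors and about the branch-point edge case, which the paper leaves implicit.
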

\begin{proof}
 For $L\in{B_{\delta}(k)}\setminus{W_{\delta,\mathrm{sing}}(k)}$, by Lemma~\ref{162111_7Aug18}, the image $\Psi_{J(C),\delta}(L)$ of the Gauss map is given by
 $$
 q_{1}+\dots+q_{n}+q'_{1}+\dots+q'_{n}\in|\Omega_{C}^{1}|
 \cong
 \mathbf{P}(H^{0}(C,\Omega_{C}^{1})^{\vee}),
 $$
 where the effective divisors $q_{1}+\dots+q_{n}$ and $q'_{1}+\dots+q'_{n}$ are uniquely determined by $L\otimes\delta\cong\mathcal{O}_{C}(q_{1}+\dots+q_{n})$ and $\Omega_{C}^{1}\otimes{L^{\vee}}\otimes\delta^{\vee}\cong\mathcal{O}_{C}(q'_{1}+\dots+q'_{n})$.
 Since $L\in{B_{\delta}(k)}$, by Lemma~\ref{154952_6Aug18}, $\sigma(q_{i})=q_{j}$ for some $i\neq{j}$.
 By Lemma~\ref{115339_19Aug18}, we have $\Psi_{J(C),\delta}(L)\in\iota(\mathbf{P}(H^{0}(E,\eta)^{\vee}))$.
\end{proof}
Let
$$
\Psi_{P,\delta}:D_{\delta}\setminus{D_{\delta,\mathrm{sing}}}\rightarrow
\mathbf{P}(H^{0}(P,\Omega_{P}^{1})^{\vee})
=\Grass{(n-1,H^{0}(P,\Omega_{P}^{1})^{\vee})}
$$
be the Gauss map for the subvariety $D_{\delta}\subset{P}$.
For $L\in{D_{\delta}(k)}\setminus{D_{\delta,\mathrm{sing}}(k)}$, the tangent space $T_{L}(D_{\delta})$ of $D_{\delta}$ at $L$ defines the image $\Psi_{P,\delta}(L)$ by the natural identifications
$$
T_{L}(D_{\delta})\subset
T_{L}(P)\cong(\Omega_{P}^{1}(L))^{\vee}\cong
H^{0}(P,\Omega_{P}^{1})^{\vee}.
$$
For $L\in{P(k)}$, the tangent space $T_{L}(P)$ of $P$ at $L$ is naturally identified with the orthogonal subspace
$$
V_{P}=(\phi^{*}H^{0}(E,\Omega_{E}^{1}))^{\perp}\subset
H^{0}(C,\Omega_{C}^{1})^{\vee}\cong{T_{L}(J(C))}
$$
to $\phi^{*}H^{0}(E,\Omega_{E}^{1})\subset{H^{0}(C,\Omega_{C}^{1})}$,
and it corresponds to the ramification divisor $\Ram{(\phi)}\in|\Omega_{C}^{1}|$ of the covering $\phi:C\rightarrow{E}$.
We define the finite set $\Sigma_{\delta}$ by
$$
\Sigma_{\delta}=\{\beta_{\delta}^{0}(r_{1}+\dots+r_{n})\in{J(C)}\mid
r_{1}+\dots+r_{n}\leq\Ram{(\phi)}\}.
$$
\begin{lemma}\label{165456_25Aug18}
 $D_{\delta,\mathrm{sing}}=(W_{\delta,\mathrm{sing}}\cup\Sigma_{\delta})\cap{D_{\delta}}$.
\end{lemma}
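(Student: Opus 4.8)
The plan is to exploit that $D_{\delta}$ is cut out in $P$ by the restriction of the section defining $W_{\delta}$, reduce the singularity condition to an equality of tangent hyperplanes, and then read that equality through the Gauss map $\Psi_{J(C),\delta}$ together with Lemma~\ref{162111_7Aug18}.

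First I would observe that, since $\mathcal{L}_{\delta}=\mathcal{O}_{J(C)}(W_{\delta})\vert_{P}$, for $L\in D_{\delta}$ and a local equation $f$ of $W_{\delta}$ near $L$ in $J(C)$, the restriction $f\vert_{P}$ is a local equation of $D_{\delta}$ near $L$ in $P$; since $P$ is nonsingular and $D_{\delta}\neq P$, one has $L\in D_{\delta,\mathrm{sing}}$ if and only if $d(f\vert_{P})_{L}=0$, i.e.\ $df_{L}$ vanishes on $T_{L}(P)\subset T_{L}(J(C))$. If $L\in W_{\delta,\mathrm{sing}}$ then $df_{L}=0$, so $L\in D_{\delta,\mathrm{sing}}$; this already gives $W_{\delta,\mathrm{sing}}\cap D_{\delta}\subset D_{\delta,\mathrm{sing}}$. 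If instead $L\in D_{\delta}\setminus W_{\delta,\mathrm{sing}}$, then $W_{\delta}$ is smooth at $L$, $\Ker(df_{L})=T_{L}(W_{\delta})$ is a hyperplane, and since $T_{L}(P)=V_{P}$ is also a hyperplane, the condition $L\in D_{\delta,\mathrm{sing}}$ becomes $T_{L}(W_{\delta})=V_{P}$.

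Next I would translate $T_{L}(W_{\delta})=V_{P}$ by projective duality: under $\Grass(n,H^{0}(C,\Omega_{C}^{1})^{\vee})\cong\mathbf{P}(H^{0}(C,\Omega_{C}^{1})^{\vee})=|\Omega_{C}^{1}|$, which sends a hyperplane to its annihilating line, $T_{L}(W_{\delta})$ goes to $\Psi_{J(C),\delta}(L)$ while $V_{P}$ goes to $\Ram(\phi)$ by the description of $V_{P}$ recalled just above the statement. Hence, for $L\in D_{\delta}\setminus W_{\delta,\mathrm{sing}}$, one has $L\in D_{\delta,\mathrm{sing}}$ if and only if $\Psi_{J(C),\delta}(L)=\Ram(\phi)$ in $|\Omega_{C}^{1}|$. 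Then I would finish with Lemma~\ref{162111_7Aug18}: writing $L\otimes\delta\cong\mathcal{O}_{C}(q_{1}+\dots+q_{n})$ and $\Omega_{C}^{1}\otimes L^{\vee}\otimes\delta^{\vee}\cong\mathcal{O}_{C}(q'_{1}+\dots+q'_{n})$ (uniquely, since $h^{0}(C,L\otimes\delta)=1$), we have $\Psi_{J(C),\delta}(L)=q_{1}+\dots+q_{n}+q'_{1}+\dots+q'_{n}$. Using that $\mathcal{O}_{C}(\Ram(\phi))\cong\Omega_{C}^{1}$ and that $\Ram(\phi)$ is reduced of degree $2n$: if this canonical divisor equals $\Ram(\phi)$, then $q_{1}+\dots+q_{n}\leq\Ram(\phi)$, so $L=\beta_{\delta}^{0}(q_{1}+\dots+q_{n})\in\Sigma_{\delta}$; conversely, if $L\otimes\delta\cong\mathcal{O}_{C}(r_{1}+\dots+r_{n})$ with $r_{1}+\dots+r_{n}\leq\Ram(\phi)$, then $q_{1}+\dots+q_{n}=r_{1}+\dots+r_{n}$ and $q'_{1}+\dots+q'_{n}=\Ram(\phi)-(r_{1}+\dots+r_{n})$, so $\Psi_{J(C),\delta}(L)=\Ram(\phi)$. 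Assembling the three steps, $D_{\delta,\mathrm{sing}}=(W_{\delta,\mathrm{sing}}\cap D_{\delta})\cup\bigl((\Sigma_{\delta}\cap D_{\delta})\setminus W_{\delta,\mathrm{sing}}\bigr)=(W_{\delta,\mathrm{sing}}\cup\Sigma_{\delta})\cap D_{\delta}$.

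I expect the step needing the most care to be the first one: correctly using the scheme structure of $D_{\delta}$ as the zero locus of a section of $\mathcal{L}_{\delta}$ to obtain the clean criterion, together with the case split on whether $L$ lies on $W_{\delta,\mathrm{sing}}$ (where $df_{L}=0$ for free) or not (where the vanishing of $df_{L}$ on $T_{L}(P)$ forces the equality of the two hyperplanes). Once that is in place, the identification of $V_{P}$ with $\Ram(\phi)$ and the comparison of effective canonical divisors via Lemma~\ref{162111_7Aug18} are routine.
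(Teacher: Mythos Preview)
Your proof is correct and follows essentially the same approach as the paper's: split into the cases $L\in W_{\delta,\mathrm{sing}}$ and $L\notin W_{\delta,\mathrm{sing}}$, and in the second case identify the condition $T_{L}(W_{\delta})=T_{L}(P)$ with $L\in\Sigma_{\delta}$ via Lemma~\ref{162111_7Aug18}. The paper's proof is more terse, simply writing the chain of equivalences $L\in D_{\delta,\mathrm{sing}}\Leftrightarrow T_{L}(W_{\delta})=T_{L}(P)\Leftrightarrow L\in\Sigma_{\delta}$, while you spell out the intermediate step through $\Psi_{J(C),\delta}(L)=\Ram(\phi)$ and the divisor comparison explicitly.
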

\begin{proof}
 If $L\in{D_{\delta}(k)\cap{W_{\delta,\mathrm{sing}}(k)}}$, then $L\in{D_{\delta,\mathrm{sing}}(k)}$.
 If $L\in{D_{\delta}(k)\setminus{W_{\delta,\mathrm{sing}}(k)}}$, then by Lemma~\ref{162111_7Aug18},
 $$
 L\in{D_{\delta,\mathrm{sing}}(k)}
 \Longleftrightarrow
 T_{L}(W_{\delta})=T_{L}(P)\subset{T_{L}(J(C))}
 \Longleftrightarrow
 L\in\Sigma_{\delta}.
 $$
\end{proof}
\begin{lemma}\label{153323_22Sep18}
 $(B_{\delta}\cap{P})\setminus{W_{\delta,\mathrm{sing}}}
 =(B_{\delta}\cap{P})\setminus{D_{\delta,\mathrm{sing}}}$.
\end{lemma}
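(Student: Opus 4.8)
The plan is to reduce the asserted identity to a combinatorial statement about the finite set $\Sigma_{\delta}$, which can then be settled using the explicit description of $B_{\delta}$ near smooth points. First I would record that $B_{\delta}\subseteq W_{\delta}$ (take $s=0$ in the definition of $B_{\delta}$), so $B_{\delta}\cap{P}\subseteq W_{\delta}\cap{P}=D_{\delta}$; equivalently, $B_{\delta}\cap{P}=\Bs|\mathcal{L}_{\delta}|$ lies on the member $D_{\delta}$ by Lemma~\ref{135618_16Aug18}. Applying Lemma~\ref{165456_25Aug18}, which gives $D_{\delta,\mathrm{sing}}=(W_{\delta,\mathrm{sing}}\cup\Sigma_{\delta})\cap{D_{\delta}}$, and intersecting with $B_{\delta}\cap{P}\subseteq D_{\delta}$ yields
$$
(B_{\delta}\cap{P})\cap{D_{\delta,\mathrm{sing}}}=(B_{\delta}\cap{P})\cap(W_{\delta,\mathrm{sing}}\cup\Sigma_{\delta}).
$$
Hence the lemma is equivalent to the inclusion $(B_{\delta}\cap{P})\cap\Sigma_{\delta}\subseteq W_{\delta,\mathrm{sing}}$, i.e.\ to the statement that every $L\in\Sigma_{\delta}$ with $L\notin W_{\delta,\mathrm{sing}}$ fails to lie in $B_{\delta}$.

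To prove this I would argue by contradiction, assuming $L\in\Sigma_{\delta}\setminus W_{\delta,\mathrm{sing}}$ lies in $B_{\delta}$. By definition of $\Sigma_{\delta}$ there is an effective divisor $r_{1}+\dots+r_{n}\leq\Ram(\phi)$ with $L\otimes\delta\cong\mathcal{O}_{C}(r_{1}+\dots+r_{n})$; since we work in characteristic $\neq2$, the ramification divisor $\Ram(\phi)$ is reduced, so the points $r_{i}$ are distinct and each is fixed by $\sigma$. Because $L\notin W_{\delta,\mathrm{sing}}$ we have $h^{0}(C,L\otimes\delta)=1$, so $r_{1}+\dots+r_{n}$ is the only effective divisor in $|L\otimes\delta|$. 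Since we are assuming $L\in B_{\delta}$ and $L\notin W_{\delta,\mathrm{sing}}$, Lemma~\ref{154952_6Aug18} gives $L\in B^{1}_{\delta}$, so the unique divisor $r_{1}+\dots+r_{n}$ can be written as $q_{1}+\dots+q_{n-2}+q+\sigma(q)$ for some $q_{1},\dots,q_{n-2},q\in{C(k)}$; in particular $q+\sigma(q)\leq r_{1}+\dots+r_{n}$. If $q$ is a ramification point, then $2q\leq r_{1}+\dots+r_{n}$, contradicting that $r_{1}+\dots+r_{n}$ is reduced; if $q$ is not a ramification point, then $q\neq\sigma(q)$ while both $q$ and $\sigma(q)$ lie in $\{r_{1},\dots,r_{n}\}\subseteq\Ram(\phi)$, which forces $q$ to be $\sigma$-fixed, again a contradiction. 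Hence $L\notin B_{\delta}$, as desired.

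Combining the two steps gives $(B_{\delta}\cap{P})\setminus{D_{\delta,\mathrm{sing}}}=(B_{\delta}\cap{P})\setminus(W_{\delta,\mathrm{sing}}\cup\Sigma_{\delta})=(B_{\delta}\cap{P})\setminus{W_{\delta,\mathrm{sing}}}$, which is the assertion. This argument is essentially bookkeeping built on Lemmas~\ref{135618_16Aug18}, \ref{154952_6Aug18} and~\ref{165456_25Aug18}, so I do not expect a serious obstacle; the only delicate point is the elementary observation that a reduced divisor supported on $\sigma$-fixed points cannot contain a $\sigma$-conjugate pair $q+\sigma(q)$, which is precisely what rules out membership in $B^{1}_{\delta}$ and hence separates $\Sigma_{\delta}$ from the smooth base locus.
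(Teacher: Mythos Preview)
Your proof is correct and follows essentially the same route as the paper: both reduce, via Lemma~\ref{165456_25Aug18}, to showing $(B_{\delta}\setminus W_{\delta,\mathrm{sing}})\cap\Sigma_{\delta}=\emptyset$, invoke Lemma~\ref{154952_6Aug18} to replace $B_{\delta}$ by $B^{1}_{\delta}$ there, and then use that $\Ram(\phi)$ is reduced to rule out a representation $r_{1}+\dots+r_{n}=q_{1}+\dots+q_{n-2}+q+\sigma(q)$. The paper compresses the last step into a single clause, while you spell out the case split on whether $q$ is a ramification point; the content is the same.
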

\begin{proof}
 By Lemma~\ref{154952_6Aug18},
 $(B_{\delta}\setminus{W_{\delta,\mathrm{sing}}})\cap\Sigma_{\delta}
 =(B^{1}_{\delta}\setminus{W_{\delta,\mathrm{sing}}})\cap\Sigma_{\delta}$,
 and it is empty because $\Ram{(\phi)}$ is reduced.
 Hence by Lemma~\ref{165456_25Aug18},
 $$
 {W_{\delta,\mathrm{sing}}}\cap{B_{\delta}\cap{P}}
 ={W_{\delta,\mathrm{sing}}}\cap{B_{\delta}}\cap{D_{\delta}}
 =D_{\delta,\mathrm{sing}}\cap{B_{\delta}}
 =D_{\delta,\mathrm{sing}}\cap{B_{\delta}\cap{P}}.
 $$
\end{proof}
We denote by
\begin{align*}
 \pi:\,\mathbf{P}(H^{0}(C,\Omega_{C}^{1})^{\vee})\setminus\{V_{P}\}
 &\longrightarrow
 \mathbf{P}(H^{0}(P,\Omega_{P}^{1})^{\vee});\\
 [V\subset{H^{0}(C,\Omega_{C}^{1})^{\vee}}]
  &\longmapsto
  [V\cap{V_{P}}\subset{V_{P}}\cong{H^{0}(P,\Omega_{P}^{1})^{\vee}}]
\end{align*}
the projection, where $V_{P}=(\phi^{*}H^{0}(E,\Omega_{E}^{1}))^{\perp}\subset{H^{0}(C,\Omega_{C}^{1})^{\vee}}$ is the image of the dual of the restriction
$$
H^{0}(C,\Omega_{C}^{1})\cong
H^{0}(J(C),\Omega_{J(C)}^{1})
\twoheadrightarrow{H^{0}(P,\Omega_{P}^{1})}.
$$
\begin{lemma}\label{151439_19Aug18}
 $\Psi_{P,\delta}(L)=\pi\circ\Psi_{J(C),\delta}(L)$
 for $L\in{D_{\delta}(k)\setminus{D_{\delta,\mathrm{sing}}(k)}}$.
\end{lemma}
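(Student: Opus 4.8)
The plan is to express both sides in terms of the tangent hyperplane $T_{L}(W_{\delta})\subset T_{L}(J(C))$ and the fixed hyperplane $V_{P}=T_{L}(P)$, and to check that $D_{\delta}$ is cut out transversally by $P$ inside $W_{\delta}$ at the point $L$; then the equality becomes a formal statement about intersecting a hyperplane with $V_{P}$.

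First I would record that, since $L\in D_{\delta}(k)\setminus D_{\delta,\mathrm{sing}}(k)$, Lemma~\ref{165456_25Aug18} gives $L\notin W_{\delta,\mathrm{sing}}$ and $L\notin\Sigma_{\delta}$. The first condition says $W_{\delta}$ is smooth at $L$, so $T_{L}(W_{\delta})$ is a hyperplane in $T_{L}(J(C))\cong H^{0}(C,\Omega_{C}^{1})^{\vee}$, and by definition this hyperplane is the point $\Psi_{J(C),\delta}(L)$. The second condition, unwound via Lemma~\ref{162111_7Aug18} and the definition of $\Sigma_{\delta}$, says exactly that $T_{L}(W_{\delta})\neq T_{L}(P)=V_{P}$: indeed $L\in\Sigma_{\delta}$ is precisely the locus where the Gauss image equals the class of $\Ram(\phi)$, equivalently where $T_{L}(W_{\delta})=V_{P}$.

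Next, because $T_{L}(W_{\delta})$ and $V_{P}$ are distinct hyperplanes in $T_{L}(J(C))$, the subvarieties $W_{\delta}$ and $P$ meet transversally at $L$; hence $D_{\delta}=W_{\delta}\cap P$ is smooth at $L$ with $T_{L}(D_{\delta})=T_{L}(W_{\delta})\cap T_{L}(P)$. Under the identification $T_{L}(P)\cong V_{P}\subset H^{0}(C,\Omega_{C}^{1})^{\vee}$, which is compatible with the inclusion $T_{L}(P)\subset T_{L}(J(C))$ since $V_{P}$ is the orthogonal complement of $\phi^{*}H^{0}(E,\Omega_{E}^{1})$ and $H^{0}(P,\Omega_{P}^{1})$ is the dual quotient of $H^{0}(C,\Omega_{C}^{1})$, we obtain $T_{L}(D_{\delta})=T_{L}(W_{\delta})\cap V_{P}$ as a hyperplane in $V_{P}$, which by definition is the point $\Psi_{P,\delta}(L)$.

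Finally, $\Psi_{J(C),\delta}(L)=[T_{L}(W_{\delta})]\neq[V_{P}]$ lies in the domain of $\pi$, and $\pi([T_{L}(W_{\delta})])=[T_{L}(W_{\delta})\cap V_{P}\subset V_{P}]=\Psi_{P,\delta}(L)$, which is the asserted identity. The only steps needing care are the transversality of $W_{\delta}$ and $P$ at $L$ and the compatibility of the two tangent-space identifications; once $L\notin\Sigma_{\delta}$ has been translated into $T_{L}(W_{\delta})\neq V_{P}$, the rest is linear algebra, so I do not anticipate a genuine obstacle.
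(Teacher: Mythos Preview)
Your argument is correct and follows essentially the same approach as the paper's own proof: both reduce to the identity $T_{L}(D_{\delta})=T_{L}(W_{\delta})\cap T_{L}(P)$ inside $T_{L}(J(C))$, identify $T_{L}(P)$ with $V_{P}$, and read off the result from the definition of $\pi$. Your version is more explicit in invoking Lemma~\ref{165456_25Aug18} to ensure $L\notin W_{\delta,\mathrm{sing}}$ and $L\notin\Sigma_{\delta}$ (so that $\Psi_{J(C),\delta}(L)$ is defined and lies in the domain of $\pi$), whereas the paper leaves these points implicit; but the substance is the same.
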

\begin{proof}
 For $L\in{D_{\delta}(k)\setminus{D_{\delta,\mathrm{sing}}(k)}}$, the tangent spaces at $L$ satisfies
 $$
 T_{L}(P)\cap{T_{L}(W_{\delta})}=T_{L}(D_{\delta})\subset{T_{L}(J(C))},
 $$
 because $P\cap{W_{\delta}}=D_{\delta}\subset{J(C)}$.
 Since $T_{L}(P)\subset{T_{L}(J(C))}$ is identified with $V_{P}\subset{H^{0}(C,\Omega_{C}^{1})^{\vee}}$ by $T_{L}(J(C))\cong{H^{0}(C,\Omega_{C}^{1})^{\vee}}$, we have $\Psi_{P,\delta}(L)=\pi\circ\Psi_{J(C),\delta}(L)$.
\end{proof}
By Lemma~\ref{151417_19Aug18}, we have the morphism
$$
\Psi^{B}_{J(C),\delta}:\,
B_{\delta}\setminus{W_{\delta,\mathrm{sing}}}\longrightarrow
\mathbf{P}(H^{0}(E,\eta)^{\vee})
$$
satisfying $\iota\circ\Psi^{B}_{J(C),\delta}=\Psi_{J(C),\delta}$.
\begin{lemma}\label{163242_24Nov18}
 The restriction $\Psi_{P,\delta}\vert_{(B_{\delta}\cap{P})\setminus{D_{\delta,\mathrm{sing}}}}$ of the Gauss map $\Psi_{P,\delta}$ is identified with the restriction $\Psi^{B}_{J(C),\delta}\vert_{(B_{\delta}\cap{P})\setminus{D_{\delta,\mathrm{sing}}}}$ of $\Psi^{B}_{J(C),\delta}$ by the isomorphism
 $$
 \pi\circ\iota:\,
 \mathbf{P}(H^{0}(E,\eta)^{\vee})\overset{\sim}{\longrightarrow}
 \mathbf{P}(H^{0}(P,\Omega_{P}^{1})^{\vee}).
 $$
\end{lemma}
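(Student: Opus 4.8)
The plan is to verify first that $\pi\circ\iota$ is a well-defined isomorphism of projective spaces, and then to obtain the identification of the two restricted maps from the earlier lemmas; only the former requires work. To start, note that all the maps in question are defined on the given locus: $B_{\delta}\cap{P}\subset{W_{\delta}\cap{P}}=D_{\delta}$, and by Lemma~\ref{153323_22Sep18} one has $(B_{\delta}\cap{P})\setminus{D_{\delta,\mathrm{sing}}}=(B_{\delta}\cap{P})\setminus{W_{\delta,\mathrm{sing}}}\subset{B_{\delta}\setminus{W_{\delta,\mathrm{sing}}}}$, so each of $\Psi_{J(C),\delta}$, $\Psi_{P,\delta}$, $\Psi^{B}_{J(C),\delta}$ restricts to $(B_{\delta}\cap{P})\setminus{D_{\delta,\mathrm{sing}}}$.

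The key step is to understand $\pi\circ\iota$. The closed immersion $\iota$ identifies $\mathbf{P}(H^{0}(E,\eta)^{\vee})$ with the linear subspace of $\mathbf{P}(H^{0}(C,\Omega_{C}^{1})^{\vee})$ consisting of the hyperplanes of $H^{0}(C,\Omega_{C}^{1})^{\vee}$ that contain the line $(H^{0}(E,\eta))^{\perp}$, whereas the centre of the projection $\pi$ is the hyperplane $V_{P}=(\phi^{*}H^{0}(E,\Omega_{E}^{1}))^{\perp}$. Consequently $\pi\circ\iota$ is everywhere defined, and then automatically an isomorphism of $(n-1)$-dimensional projective spaces, as soon as the restriction map $H^{0}(C,\Omega_{C}^{1})\twoheadrightarrow H^{0}(P,\Omega_{P}^{1})$ (whose kernel is $\phi^{*}H^{0}(E,\Omega_{E}^{1})$) carries $H^{0}(E,\eta)$ isomorphically onto $H^{0}(P,\Omega_{P}^{1})$; since $\dim H^{0}(E,\eta)=n=\dim H^{0}(P,\Omega_{P}^{1})$, this amounts to
$$
\phi^{*}H^{0}(E,\Omega_{E}^{1})\cap H^{0}(E,\eta)=\{0\}\quad\text{in}\ H^{0}(C,\Omega_{C}^{1}).
$$
I would prove this by comparing divisors of canonical forms: for $0\neq\omega\in H^{0}(E,\Omega_{E}^{1})$, the form $\phi^{*}\omega$ has divisor the ramification divisor $\Ram(\phi)$ (as $\omega$ is nowhere zero on $E$), which is reduced; on the other hand, using $\phi^{*}\eta\cong\Omega_{C}^{1}$, a nonzero form in the image of $H^{0}(E,\eta)$ has divisor of the form $\phi^{*}D$ for an effective divisor $D$ of degree $n$ on $E$. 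Since $\Ram(\phi)\neq0$ there is a ramification point $r$, and $\phi^{*}D=\Ram(\phi)$ would force $\phi(r)\in D$, hence $\phi^{*}D$ would have multiplicity $\geq 2$ at $r$ — contradicting reducedness of $\Ram(\phi)$. Thus the intersection is trivial and $\pi\circ\iota$ is a well-defined isomorphism.

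Finally, for $L\in(B_{\delta}\cap{P})\setminus{D_{\delta,\mathrm{sing}}}$ the identification is immediate: Lemma~\ref{151439_19Aug18} gives $\Psi_{P,\delta}(L)=\pi\circ\Psi_{J(C),\delta}(L)$ (and $\pi$ is indeed applicable at $\Psi_{J(C),\delta}(L)$, since $L\notin D_{\delta,\mathrm{sing}}$: cf. Lemma~\ref{165456_25Aug18}), and Lemma~\ref{151417_19Aug18} together with the defining relation $\iota\circ\Psi^{B}_{J(C),\delta}=\Psi_{J(C),\delta}$ gives $\Psi_{J(C),\delta}(L)=\iota\circ\Psi^{B}_{J(C),\delta}(L)$; composing, $\Psi_{P,\delta}(L)=(\pi\circ\iota)\circ\Psi^{B}_{J(C),\delta}(L)$. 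The main obstacle is the middle step — establishing $\phi^{*}H^{0}(E,\Omega_{E}^{1})\cap H^{0}(E,\eta)=\{0\}$, equivalently that $\pi\circ\iota$ is a genuine isomorphism and not merely a rational map; the rest is bookkeeping with Lemmas~\ref{153323_22Sep18}, \ref{151417_19Aug18}, \ref{151439_19Aug18} and \ref{165456_25Aug18}.
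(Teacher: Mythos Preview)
Your proof is correct and follows essentially the same route as the paper: both reduce the statement to the fact that the composition $H^{0}(E,\eta)\hookrightarrow H^{0}(C,\Omega_{C}^{1})\twoheadrightarrow H^{0}(P,\Omega_{P}^{1})$ is an isomorphism, and then invoke Lemmas~\ref{151417_19Aug18} and~\ref{151439_19Aug18}. The paper simply asserts this composition is an isomorphism without argument; your divisor comparison ($\Ram(\phi)$ is reduced whereas any $\phi^{*}D$ meeting it would have a multiplicity~$\geq 2$ point) fills in exactly that gap.
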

\begin{proof}
 Since the composition
 $$
 H^{0}(E,\eta)\hookrightarrow
 H^{0}(C,\Omega_{C}^{1})\cong
 H^{0}(J(C),\Omega_{J(C)}^{1})
 \twoheadrightarrow{H^{0}(P,\Omega_{P}^{1})},
 $$
 is an isomorphism, it is a consequence of Lemma~\ref{151417_19Aug18} and Lemma~\ref{151439_19Aug18}.
\end{proof}
\subsection{Description for the restricted Gauss maps}
Let $\gamma_{\delta}:E^{(n-2)}\times{E}\rightarrow{J(E)}$ be the morphism defined by
\begin{align*}
 \gamma_{\delta}:\,
 E^{(n-2)}(k)\times{E(k)}&\longrightarrow\Pic^{0}{(E)};\\
 (p_{1}+\dots+p_{n-2},p)&\longmapsto
 \mathcal{O}_{E}(p_{1}+\dots+p_{n-2}+2p)\otimes(N(\delta))^{\vee}.
\end{align*}
Let $X_{\delta}\subset{E^{(n-2)}\times{E}}$ be the fiber of $\gamma_{\delta}$ at $0\in{J(E)}$, and let $Y_{\delta}\subset{C^{(n-2)}\times{E}}$ be the fiber of the composition
$$
\begin{array}{ccc}
 C^{(n-2)}\times{E}
 &\overset{\phi^{(n-2)}\times\mathrm{id}_{E}}{\longrightarrow}&
 E^{(n-2)}\times{E}
 \overset{\gamma_{\delta}}{\longrightarrow}J(E)
\end{array}
$$
at $0\in{J(E)}$.
We denote by $\psi_{\delta}:Y_{\delta}\rightarrow{X_{\delta}}$ the induced morphism by $\phi^{(n-2)}\times\mathrm{id}_{E}$.
Let $\nu_{\delta}:X_{\delta}\rightarrow|\eta|\cong\mathbf{P}(H^{0}(E,\eta)^{\vee})$ be the morphism defined by
$$
\nu_{\delta}:\,
X_{\delta}(k)\rightarrow|\eta|;\,
(p_{1}+\dots+p_{n-2},p)\longmapsto
p_{1}+\dots+p_{n-2}+p+t_{\eta-N(\delta)}(p),
$$
where $t_{\eta-N(\delta)}(p)\in{E(k)}$ is the point determined by
$$
[\mathcal{O}_{E}(t_{\eta-N(\delta)}(p))]=
[\mathcal{O}_{E}(p)]+\eta-N(\delta)\in\Pic{(E)}.
$$
We remark that $\beta^{1}_{\delta}(Y_{\delta})={B^{1}_{\delta}}\cap{P}\subset{J(C)}$, and we set
$$
Y_{\delta}^{\circ}
=(\beta^{1}_{\delta})^{-1}((B^{1}_{\delta}\cap{P})\setminus{D_{\delta,\mathrm{sing}}})
=(\beta^{1}_{\delta})^{-1}((B_{\delta}\cap{P})\setminus{D_{\delta,\mathrm{sing}}}).
$$
\begin{lemma}\label{143743_30Aug18}
 The diagram
 $$
 \begin{array}{ccc}
  Y_{\delta}^{\circ}&
   \overset{\beta^{1}_{\delta}}{\longrightarrow}&
   (B_{\delta}\cap{P})\setminus{D_{\delta,\mathrm{sing}}}\\
  \psi_{\delta}\downarrow\quad& &\quad\downarrow\Psi^{B}_{J(C),\delta}\\
  X_{\delta}&\underset{\nu_{\delta}}{\longrightarrow}&
   \mathbf{P}(H^{0}(E,\eta)^{\vee})\\
 \end{array}
 $$
 is commutative.
\end{lemma}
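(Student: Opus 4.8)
The plan is to prove the commutativity of the diagram pointwise. Fix $y=(q_{1}+\dots+q_{n-2},p)\in Y_{\delta}^{\circ}$ and choose $q\in C(k)$ with $\phi(q)=p$, so that $\phi^{*}\mathcal{O}_{E}(p)\cong\mathcal{O}_{C}(q+\sigma(q))$ and hence $L:=\beta^{1}_{\delta}(y)$ satisfies $L\otimes\delta\cong\mathcal{O}_{C}(q_{1}+\dots+q_{n-2}+q+\sigma(q))$. By the definition of $Y_{\delta}^{\circ}$ we have $L\in(B_{\delta}\cap{P})\setminus{D_{\delta,\mathrm{sing}}}$, so $L\notin{W_{\delta,\mathrm{sing}}}$ by Lemma~\ref{153323_22Sep18}; thus $h^{0}(C,L\otimes\delta)=1$, and since $\deg(L\otimes\delta)=n$, Serre duality and Riemann--Roch give $h^{0}(C,\Omega_{C}^{1}\otimes{L^{\vee}}\otimes\delta^{\vee})=1$ as well. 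Writing $q'_{1}+\dots+q'_{n}$ for the unique effective divisor with $\mathcal{O}_{C}(q'_{1}+\dots+q'_{n})\cong\Omega_{C}^{1}\otimes{L^{\vee}}\otimes\delta^{\vee}$, Lemma~\ref{162111_7Aug18} then says that $\Psi_{J(C),\delta}(L)$ is the canonical divisor $(q_{1}+\dots+q_{n-2}+q+\sigma(q))+(q'_{1}+\dots+q'_{n})$.

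Next I would compute the other route around the square. By construction $\psi_{\delta}(y)=(\phi(q_{1})+\dots+\phi(q_{n-2}),p)\in{X_{\delta}}$, so $\nu_{\delta}(\psi_{\delta}(y))=\phi(q_{1})+\dots+\phi(q_{n-2})+p+p''\in|\eta|$, where $p''\in{E(k)}$ is the point with $\mathcal{O}_{E}(p'')\cong\mathcal{O}_{E}(p)\otimes\eta\otimes{N(\delta)^{\vee}}$; choose $q''\in{C(k)}$ with $\phi(q'')=p''$. Recalling that the closed immersion $\iota$ carries a divisor $D'\in|\eta|$ to $\phi^{*}D'\in|\Omega_{C}^{1}|$, and that $\iota\circ\Psi^{B}_{J(C),\delta}=\Psi_{J(C),\delta}$ with $\iota$ injective, it suffices to prove
\begin{align*}
 (q_{1}+\dots+q_{n-2}+q+\sigma(q))+(q'_{1}+\dots+q'_{n})
 &=\phi^{*}\bigl(\phi(q_{1})+\dots+\phi(q_{n-2})+p+p''\bigr)\\
 &=\sum_{i=1}^{n-2}(q_{i}+\sigma(q_{i}))+(q+\sigma(q))+(q''+\sigma(q'')),
\end{align*}
which, by the uniqueness of $q'_{1}+\dots+q'_{n}$, amounts to the linear equivalence $\mathcal{O}_{C}(\sigma(q_{1})+\dots+\sigma(q_{n-2})+q''+\sigma(q''))\cong\Omega_{C}^{1}\otimes{L^{\vee}}\otimes\delta^{\vee}$.

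The last step is this linear-equivalence check, which is where the hypotheses on $y$ enter. Since $y\in{Y_{\delta}}$ we have $\mathcal{O}_{E}(\phi(q_{1})+\dots+\phi(q_{n-2})+2p)\cong{N(\delta)}$; pulling this back along $\phi$ (and using $\phi^{*}N(\mathcal{M})\cong\mathcal{M}\otimes\sigma^{*}\mathcal{M}$ rewritten on divisors) gives $\phi^{*}N(\delta)\cong\mathcal{O}_{C}(\sum_{i=1}^{n-2}(q_{i}+\sigma(q_{i}))+2q+2\sigma(q))$. Then, using $\phi^{*}\eta\cong\Omega_{C}^{1}$ and $\mathcal{O}_{C}(q''+\sigma(q''))=\phi^{*}\mathcal{O}_{E}(p'')\cong\mathcal{O}_{C}(q+\sigma(q))\otimes\phi^{*}\eta\otimes\phi^{*}N(\delta)^{\vee}$, I would substitute and cancel to obtain
$$
\mathcal{O}_{C}(\sigma(q_{1})+\dots+\sigma(q_{n-2})+q''+\sigma(q''))\cong\phi^{*}\eta\otimes\mathcal{O}_{C}(-q_{1}-\dots-q_{n-2}-q-\sigma(q))\cong\Omega_{C}^{1}\otimes{L^{\vee}}\otimes\delta^{\vee},
$$
as required. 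Consequently $\Psi_{J(C),\delta}(L)=\iota(\nu_{\delta}(\psi_{\delta}(y)))$, and hence $\Psi^{B}_{J(C),\delta}(\beta^{1}_{\delta}(y))=\nu_{\delta}(\psi_{\delta}(y))$, which is exactly the commutativity of the diagram. I do not expect a real obstacle here; the only points demanding care are bookkeeping which of the $q_{i}$ are conjugated by $\sigma$, and invoking the two $h^{0}=1$ statements to pin down the effective divisors uniquely.
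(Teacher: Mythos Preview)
Your proof is correct and follows essentially the same route as the paper's: both take a point $y=(q_{1}+\dots+q_{n-2},\phi(q))\in Y_{\delta}^{\circ}$, identify the unique effective divisor in $|L\otimes\delta|$, compute $\nu_{\delta}\circ\psi_{\delta}(y)$ explicitly as $\phi(q_{1})+\dots+\phi(q_{n-2})+\phi(q)+t_{\eta-N(\delta)}(\phi(q))$ (your $p''$ is exactly the paper's $t_{\eta-N(\delta)}(\phi(q))$ and your $q''$ is its $r$), verify that $\sigma(q_{1})+\dots+\sigma(q_{n-2})+q''+\sigma(q'')$ lies in $|\Omega_{C}^{1}\otimes L^{\vee}\otimes\delta^{\vee}|$, and then invoke Lemma~\ref{162111_7Aug18}. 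Your write-up is in fact slightly more careful than the paper's in that you make explicit, via Lemma~\ref{153323_22Sep18}, why $L\notin W_{\delta,\mathrm{sing}}$ and hence why the two $h^{0}=1$ statements hold, whereas the paper simply asserts the uniqueness of the effective divisors.
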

\begin{proof}
 Let $L\in\Pic^{0}{(C)}$ be the invertible sheaf which represents the point $\beta^{1}_{\delta}(y)\in{J(C)}$ for $y=(q_{1}+\dots+q_{n-2},\phi(q))\in{Y_{\delta}^{\circ}(k)}$.
 Then
 $q_{1}+\dots+q_{n-2}+q+\sigma(q)\in{C^{(n)}(k)}$
 is the unique effective divisor with
 $L\cong\mathcal{O}_{C}(q_{1}+\dots+q_{n-2}+q+\sigma(q))\otimes\delta^{\vee}$.
 Since
 $$
 \nu_{\delta}\circ\psi_{\delta}(y)=
 \phi(q_{1})+\dots+\phi(q_{n-2})+\phi(q)+t_{\eta-N(\delta)}(\phi(q))
 \in|\eta|,
 $$
 we have
 $$
 q_{1}+\sigma(q_{1})+\dots+q_{n-2}+\sigma(q_{n-2})+q+\sigma(q)
 +r+\sigma(r)\in|\Omega_{C}^{1}|,
 $$
 where $r\in{C(k)}$ is given by $\phi(r)=t_{\eta-N(\delta)}(\phi(q))$.
 Then $\sigma(q_{1})+\dots+\sigma(q_{n-2})+r+\sigma(r)\in{C^{(n)}(k)}$ is the unique effective divisor with $\mathcal{O}_{C}(\sigma(q_{1})+\dots+\sigma(q_{n-2})+r+\sigma(r))\cong\Omega_{C}^{1}\otimes{L^{\vee}\otimes\delta^{\vee}}$,
 and by Lemma~\ref{162111_7Aug18}, $\Phi^{B}_{J(C),\delta}(L)$ is equal to $\nu_{\delta}\circ\psi_{\delta}(y)$,
\end{proof}
\begin{lemma}\label{173307_8Sep18}
 $X_{\delta}$ and $Y_{\delta}$ are nonsingular projective varieties.
\end{lemma}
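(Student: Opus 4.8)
The plan is to realize $X_{\delta}$ and $Y_{\delta}$ as fibres of \emph{smooth} morphisms to the elliptic curve $J(E)$, so that their nonsingularity becomes automatic. Projectivity is immediate, since $X_{\delta}$ and $Y_{\delta}$ are closed subschemes of the projective varieties $E^{(n-2)}\times E$ and $C^{(n-2)}\times E$; nonemptiness holds because the morphisms defining them are proper and non-constant, hence surjective onto the curve $J(E)$; and the expected fibre dimension is $(n-1)-1=n-2$.

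For the smoothness, fix $e_{0}\in E$, let $a\colon E^{(n-2)}\to\Pic^{n-2}(E)$ be the Abel map and $c\colon E\to J(E)$, $p\mapsto[\mathcal{O}_{E}(p-e_{0})]$, the Abel isomorphism. After identifying $\Pic^{n-2}(E)$ with $J(E)$ by a fixed twist and translating, $\gamma_{\delta}$ becomes the map $(D,p)\mapsto a(D)+2\,c(p)$; that is, $\gamma_{\delta}$ is the composite of $a\times(2c)\colon E^{(n-2)}\times E\to J(E)\times J(E)$ with the group law $J(E)\times J(E)\to J(E)$. Since the differential of the group law of an abelian variety is, under the canonical trivialisation of the tangent bundle, just $(v,w)\mapsto v+w$, one gets $d\gamma_{\delta}|_{(D,p)}(v,w)=da|_{D}(v)+2\,dc|_{p}(w)$. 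Because $\operatorname{char}k\neq 2$, multiplication by $2$ on the one-dimensional space $T_{0}J(E)$ is an isomorphism, and $dc|_{p}$ is an isomorphism, so already $w\mapsto 2\,dc|_{p}(w)$ surjects onto $T_{0}J(E)$; hence $d\gamma_{\delta}$ is surjective at every point, $\gamma_{\delta}$ is a smooth morphism, and $X_{\delta}=\gamma_{\delta}^{-1}(0)$ is nonsingular of dimension $n-2$. Precomposing with $\phi^{(n-2)}\times\mathrm{id}_{E}$ leaves the second coordinate untouched, so the same computation (using the $E$-direction alone) shows that the morphism $C^{(n-2)}\times E\to J(E)$ defining $Y_{\delta}$ is still smooth, whence $Y_{\delta}$ is nonsingular of dimension $n-2$.

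It remains to check connectedness, so that (being smooth) the two schemes are irreducible varieties. For $X_{\delta}$ I would use the second projection $X_{\delta}\to E$: its fibre over $p$ is the complete linear system $|N(\delta)\otimes\mathcal{O}_{E}(-2p)|$, which is a $\mathbf{P}^{n-3}$ since $n-2\geq 1$; these assemble into a $\mathbf{P}^{n-3}$-bundle over the connected curve $E$ (the relevant direct image is locally free of rank $n-2$, as $h^{0}$ is constant), so $X_{\delta}$ is connected. For $Y_{\delta}$ I would use the first projection $Y_{\delta}\to C^{(n-2)}$, which identifies $Y_{\delta}$ with the pullback of the squaring map $E\to\Pic^{2}(E)$, $p\mapsto\mathcal{O}_{E}(2p)$, along $C^{(n-2)}\to\Pic^{2}(E)$, $D'\mapsto N(\delta)\otimes N(\mathcal{O}_{C}(D'))^{\vee}$; since $\operatorname{char}k\neq 2$ the squaring map is finite étale of degree $4$, hence so is $Y_{\delta}\to C^{(n-2)}$, and it is connected because $C^{(n-2)}\to\Pic^{2}(E)$ is surjective on (étale) fundamental groups — it factors through the Abel map $C^{(n-2)}\to J(C)$ and the norm $N\colon J(C)\to J(E)$, both of which are surjective on $\pi_{1}$.

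The one genuinely delicate point is the differential computation: the tangent spaces to symmetric products are subtle at non-reduced divisors and $\phi^{(n-2)}$ is ramified, so a head-on approach would be unpleasant. Routing $\gamma_{\delta}$ through the group law of $J(E)$ and observing that the ``doubling'' direction $p\mapsto 2p$ alone already controls the differential — precisely because $\operatorname{char}k\neq 2$ — is what makes the argument clean, and this is where I expect the real content to lie; the connectedness step is then routine bookkeeping with fundamental groups.
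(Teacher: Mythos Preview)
Your argument is correct and rests on the same key observation as the paper's: multiplication by $2$ on $J(E)$ is \'etale since $\operatorname{char}k\neq 2$, so the $E$-factor alone already forces smoothness. The paper packages this as a Cartesian square exhibiting $X_{\delta}\to E^{(n-2)}$ and $Y_{\delta}\to C^{(n-2)}$ as pullbacks of the \'etale degree-$4$ map $(-2)_{J(E)}\circ\iota_{p_{0}}:E\to J(E)$, which is exactly your differential computation rephrased; your explicit treatment of connectedness (which the paper's proof does not spell out) is a useful supplement.
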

\begin{proof}
 We fix a point $p_{0}\in{E(k)}$.
 Let $\gamma_{\delta,p_{0}}:E^{(n-2)}\rightarrow{J(E)}$ be the morphism defined by
 \begin{align*}
  \gamma_{\delta,p_{0}}:\,E^{(n-2)}(k)&\longrightarrow\Pic^{0}{(E)}={J(E)(k)};\\
  p_{1}+\dots+p_{n-2}&\longmapsto
  \mathcal{O}_{E}(p_{1}+\dots+p_{n-2}+2p_{0})\otimes(N(\delta))^{\vee}.
 \end{align*}
 Then $\psi_{\delta}:Y_{\delta}\rightarrow{X_{\delta}}$ is the base change of $\phi^{(n-2)}:C^{(n-2)}\rightarrow{E^{(n-2)}}$ by the \'{e}tale covering of degree $4$;
 $$
 \begin{array}{ccccl}
  Y_{\delta}&\overset{\psi_{\delta}}{\longrightarrow}&{X_{\delta}}&
   \overset{\mathrm{pr}_{2}}{\longrightarrow}&\quad{E}\\
  \overset{\mathrm{pr}_{1}}{}\downarrow\quad&\Box&\quad\downarrow\overset{\mathrm{pr}_{1}}{}&\Box&\quad\,\downarrow(-2)_{J(E)}\circ\iota_{p_{0}}\\
  C^{(n-2)}&\underset{\phi^{(n-2)}}{\longrightarrow}&E^{(n-2)}&\underset{\gamma_{\delta,p_{0}}}{\longrightarrow}&J(E),\\
 \end{array}
 $$
 where $(-2)_{J(E)}\circ\iota_{p_{0}}:E\rightarrow{J(E)}$ is given by
 $$
 (-2)_{J(E)}\circ\iota_{p_{0}}:\,E(k)\longrightarrow\Pic^{0}{(E)}=J(E)(k);\,
 p\longmapsto\mathcal{O}_{E}(2p_{0}-2p).
 $$
\end{proof}
\begin{lemma}\label{143849_2Oct18}
 $\beta_{\delta}^{1}\vert_{Y_{\delta}^{\circ}}:Y_{\delta}^{\circ}\rightarrow
 (B_{\delta}\cap{P})\setminus{D_{\delta,\mathrm{sing}}}$
 is an isomorphism.
 In particular, $(B_{\delta}\cap{P})\setminus{D_{\delta,\mathrm{sing}}}$ is a nonsingular variety.
\end{lemma}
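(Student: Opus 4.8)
Write $Z=(B_{\delta}\cap P)\setminus D_{\delta,\mathrm{sing}}$ and $f=\beta_{\delta}^{1}\vert_{Y_{\delta}^{\circ}}\colon Y_{\delta}^{\circ}\to Z$. The plan is to prove that $f$ is proper, quasi-finite, bijective on $k$-points, and unramified. Then $f$ is finite, and a finite unramified morphism that is bijective on closed points is a closed immersion (apply Nakayama's lemma to the stalks of $f_{*}\mathcal{O}_{Y_{\delta}^{\circ}}$, which are generated by $1$ over $\mathcal{O}_{Z}$ since the fibres are reduced single points); being also surjective, $f$ is an isomorphism. As $Y_{\delta}^{\circ}$ is open in the nonsingular variety $Y_{\delta}$ of Lemma~\ref{173307_8Sep18}, this also yields the last assertion.

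I would first establish properness and surjectivity. Since $P=\Ker N$ and $Y_{\delta}$ is the fibre at $0\in J(E)$ of $N\circ\beta_{\delta}^{1}$, we have $(\beta_{\delta}^{1})^{-1}(P)=Y_{\delta}$ and $\beta_{\delta}^{1}(Y_{\delta})=B_{\delta}^{1}\cap P$, the latter closed in $J(C)$ because $\beta_{\delta}^{1}$ is proper. By Lemmas~\ref{154952_6Aug18} and~\ref{153323_22Sep18}, $Z=(B_{\delta}^{1}\cap P)\setminus W_{\delta,\mathrm{sing}}$ is open in $B_{\delta}^{1}\cap P$ and $Y_{\delta}^{\circ}=(\beta_{\delta}^{1}\vert_{Y_{\delta}})^{-1}(Z)$. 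Hence $f$ is the base change of the proper morphism $\beta_{\delta}^{1}\vert_{Y_{\delta}}\colon Y_{\delta}\to B_{\delta}^{1}\cap P$ along the open immersion $Z\hookrightarrow B_{\delta}^{1}\cap P$, so $f$ is proper, and $f(Y_{\delta}^{\circ})=Z\cap\beta_{\delta}^{1}(Y_{\delta})=Z$. For bijectivity on $k$-points, fix $L\in Z$; then $L\notin W_{\delta,\mathrm{sing}}$, so $h^{0}(C,L\otimes\delta)=1$ and there is a unique effective $D_{L}$ with $\mathcal{O}_{C}(D_{L})\cong L\otimes\delta$, while $L\notin B_{\delta}^{2}$ by Lemma~\ref{171048_17Sep18}, so $D_{L}$ contains the full fibre $\phi^{*}(p)$ for at most one $p\in E$. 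If $y=(D',\phi(q))\in(\beta_{\delta}^{1})^{-1}(L)$ then $D'+\phi^{*}(\phi(q))=D_{L}$, which forces $p=\phi(q)$ and then $D'=D_{L}-\phi^{*}(\phi(q))$; so the fibre of $f$ is a single point, and $f$ is quasi-finite.

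The essential point is unramifiedness. Factor $\beta_{\delta}^{1}$ as $C^{(n-2)}\times E\xrightarrow{\alpha}C^{(n)}\xrightarrow{u}\Pic^{n}{(C)}$ followed by a translation, where $\alpha(D',p)=D'+\phi^{*}(p)$ and $u$ is the Abel map. For $L\in Z$ one has $h^{0}(C,\mathcal{O}_{C}(D_{L}))=1$, so by the standard description of the differential of the Abel map $u$ is unramified at $D_{L}$; it thus suffices to show $\alpha$ is unramified at every $y=(D',\phi(q))\in Y_{\delta}^{\circ}$. Write $\alpha=\mathrm{add}\circ(\mathrm{id}_{C^{(n-2)}}\times\rho)$ with $\rho\colon E\to C^{(2)}$, $\rho(p)=\phi^{*}(p)$; here $\rho$ is a closed immersion, since it is injective and its composition with $C^{(2)}\to\Pic^{2}{(C)}$ is $\phi^{*}$ applied after the Abel map of $E$, which has injective differential because $\phi^{*}\colon\Pic^{0}{(E)}\to\Pic^{0}{(C)}$ is injective (Lemma~\ref{172616_1Dec18}). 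Because $L\notin B_{\delta}^{2}$ (Lemma~\ref{171048_17Sep18}), the divisor $D'$ cannot contain both $q$ and $\sigma(q)$, and, when $q=\sigma(q)$, contains $q$ with multiplicity at most one; hence at least one point of $\phi^{-1}(\phi(q))$ is disjoint from $D'$, or in the branch case $\phi^{*}(\phi(q))$ meets $D'$ to order at most one. Decomposing $C^{(n)}$ near $D_{L}$ into symmetric powers around the pairwise disjoint pieces of the support of $D_{L}$ and writing $\alpha$ in local coordinates, one checks that any vector in $\Ker(d\alpha_{y})$ has trivial $C^{(n-2)}$-component and must move the point $p\in E$; but such a motion is detected on the summand of $\phi^{*}(p)$ near the disjoint preimage, where $\phi$ is \'etale and contributes a free coordinate, or, in the branch case, through the transversality of the tangent line of $\rho(E)$ to the kernel of the differential of $\mathrm{add}$. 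Hence $d\alpha_{y}$ is injective, so $\alpha$, and therefore $f$, is unramified. This local computation, together with the accompanying case distinction, is the step I expect to require the most care.

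Putting everything together, $f$ is finite, unramified and bijective on $k$-points, hence a closed immersion onto $Z$, and by surjectivity an isomorphism; since $Y_{\delta}^{\circ}$ is nonsingular, so is $(B_{\delta}\cap P)\setminus D_{\delta,\mathrm{sing}}$.
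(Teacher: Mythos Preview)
Your proposal is correct and follows essentially the same route as the paper: both factor $\beta_{\delta}^{1}$ through the Abel map $\beta_{\delta}^{0}$ (your $u$) and the addition map $\beta^{1}$ (your $\alpha$), use that $\beta_{\delta}^{0}$ is an isomorphism off $W_{\delta,\mathrm{sing}}$, and reduce to showing $\alpha$ is injective with injective differential on the relevant locus via a case analysis governed by $L\notin B_{\delta}^{2}$. Your packaging (proper, bijective, unramified $\Rightarrow$ isomorphism) differs only cosmetically from the paper's (closed immersion into $W_{\delta}\setminus W_{\delta,\mathrm{sing}}$); the paper's case split---according to whether $\phi(q_{0})$ lies among the $\phi(q_{i})$ and whether $q_{0}$ is a ramification point---is exactly the ``local computation'' you flag as needing care, and your justification that $\rho\colon E\to C^{(2)}$ is a closed immersion via injectivity of $\phi^{*}$ is a detail the paper leaves implicit.
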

\begin{proof}
 By Lemma~\ref{153323_22Sep18}, the image $\beta_{\delta}^{1}(Y_{\delta}^{\circ})=(B_{\delta}\cap{P})\setminus{D_{\delta,\mathrm{sing}}}$ is a closed subset in ${W_{\delta}\setminus{W_{\delta,\mathrm{sing}}}}$.
 We show that
 $\beta_{\delta}^{1}\vert_{Y_{\delta}^{\circ}}:
 Y_{\delta}^{\circ}\rightarrow{W_{\delta}\setminus{W_{\delta,\mathrm{sing}}}}$
 is a closed immersion.
 Let $\beta^{1}:C^{(n-2)}\times{E}\rightarrow{C^{(n)}}$ be the morphism given in the proof of Lemma~\ref{171048_17Sep18}.
 Since $\beta_{\delta}^{1}=\beta_{\delta}^{0}\circ\beta^{1}$ and $\beta_{\delta}^{0}:C^{(n)}\rightarrow{J(C)}$ induces the isomorphism
 $$
 \beta_{\delta}^{0}:\,
 C^{(n)}\setminus(\beta_{\delta}^{0})^{-1}(W_{\delta,\mathrm{sing}})
 \overset{\cong}{\longrightarrow}{W_{\delta}\setminus{W_{\delta,\mathrm{sing}}}},
 $$
 it is enough to show that the finite morphism
 $$
 \beta^{1}:\,
 (C^{(n-2)}\times{E})\setminus(\beta_{\delta}^{1})^{-1}(W_{\delta,\mathrm{sing}})
 \longrightarrow
 C^{(n)}\setminus(\beta_{\delta}^{0})^{-1}(W_{\delta,\mathrm{sing}})
 $$
 is a closed immersion.
 We remark that it is injective by Lemma~\ref{171048_17Sep18}.
 For $y=(q_{1}+\dots+q_{n-2},\phi(q_{0}))\in{(C^{(n-2)}\times{E})\setminus(\beta_{\delta}^{1})^{-1}(W_{\delta,\mathrm{sing}})}$,
 we prove that the homomorphism
 $$
 T_{y}(C^{(n-2)}\times{E})\longrightarrow
 T_{\beta^{1}(y)}(C^{(n)})
 $$
 on the tangent spaces is injective.
 If $\phi(q_{0})\notin\{\phi(q_{1}),\dots,\phi(q_{n-2})\}$, then the point
 $y'=(q_{1}+\dots+q_{n-2},q_{0}+\sigma(q_{0}))\in{C^{(n-2)}(k)}\times{C^{(2)}(k)}$
 is not contained in the ramification divisor of the natural covering $C^{(n-2)}\times{C^{(2)}}\rightarrow{C^{(n)}}$.
 Since the morphism
 $$
 E(k)\rightarrow{C^{(2)}(k)};\,
 \phi(q)\longmapsto{q+\sigma(q)}
 $$ is a closed immersion,
 the homomorphism
 $$
 T_{y}(C^{(n-2)}\times{E})\hookrightarrow
 {T_{y'}(C^{(n-2)}\times{C^{(2)}})}\cong
 T_{\beta^{1}(y)}(C^{(n)})
 $$
 is injective.
 We consider the case when $y=(q_{1}+\dots+q_{n-2-i}+iq_{0},\phi(q_{0}))$ and $q_{0}\notin\{q_{1},\dots,q_{n-2-i}\}$ for some $i\geq1$.
 First we assume that $\sigma(q_{0})=q_{0}$.
 Then by Lemma~\ref{171048_17Sep18}, we have $i=1$.
 The point $\tilde{y}=(q_{1}+\dots+q_{n-3},q_{0},\phi(q_{0}))\in{C^{(n-3)}\times{C}\times{E}}$ is not contained in the ramification divisor of the natural covering
 ${C^{(n-3)}\times{C}\times{E}}\rightarrow{C^{(n-2)}\times{E}}$,
 and the point $\tilde{y}'=(q_{1}+\dots+q_{n-3},3q_{0})\in{C^{(n-3)}\times{C^{(3)}}}$ is not contained in the ramification divisor of the natural covering
 ${C^{(n-3)}\times{C^{(3)}}}\rightarrow{C^{(n)}}$.
 Since the morphism
 $$
 C(k)\times{E(k)}\rightarrow{C^{(3)}(k)};\,
 (q',\phi(q))\longmapsto{q'+q+\sigma(q)}
 $$ is a closed immersion,
 the homomorphism
 $$
 T_{y}(C^{(n-2)}\times{E})\cong
 T_{\tilde{y}}(C^{(n-3)}\times{C}\times{E})\hookrightarrow
 {T_{\tilde{y}'}(C^{(n-3)}\times{C^{(3)}})}\cong
 T_{\beta^{1}(y)}(C^{(n)})
 $$
 is injective.
 We assume that $\sigma(q_{0})\neq{q_{0}}$.
 Then by Lemma~\ref{171048_17Sep18}, we have $\sigma(q)\notin\{q_{1},\dots,q_{n-2-i}\}$.
 The point $\tilde{y}=(q_{1}+\dots+q_{n-2-i}+iq_{0},q_{0})\in{C^{(n-2)}\times{C}}$ is not contained in the ramification divisor of the covering
 $\mathrm{id}_{C^{(n-2)}}\times\phi:{C^{(n-2)}\times{C}}\rightarrow{C^{(n-2)}\times{E}}$,
 and the point $\tilde{y}'=(q_{1}+\dots+q_{n-2-i}+(i+1)q_{0},\sigma(q_{0}))\in{C^{(n-1)}\times{C}}$ is not contained in the ramification divisor of the natural covering
 ${C^{(n-1)}\times{C}}\rightarrow{C^{(n)}}$.
 Since the morphism
 \begin{align*}
  C^{(n-2)}(k)\times{C(k)}&\longrightarrow{C^{(n-1)}(k)\times{C(k)}};\\
  (q'_{1}+\dots+q'_{n-2},q)&\longmapsto{(q'_{1}+\dots+q'_{n-2}+q,\sigma(q))}
 \end{align*}
 is a closed immersion, the homomorphism
 $$
 T_{y}(C^{(n-2)}\times{E})\cong
 T_{\tilde{y}}(C^{(n-2)}\times{C})\hookrightarrow
 {T_{\tilde{y}'}(C^{(n-1)}\times{C})}\cong
 T_{\beta^{1}(y)}(C^{(n)})
 $$
 is injective.
\end{proof}
Let
$X'_{\delta}=\overline{\Psi^{B}_{J(C),\delta}
((B_{\delta}\cap{P})\setminus{D_{\delta,\mathrm{sing}}})}$
be the Zariski closure of the image of the restricted Gauss map
$\Psi^{B}_{J(C),\delta}\vert_{(B_{\delta}\cap{P})\setminus{D_{\delta,\mathrm{sing}}}}$ in $\mathbf{P}(H^{0}(E,\eta)^{\vee})$.
\begin{lemma}\label{123549_17Dec18}
 $\nu_{\delta}(X_{\delta})=X'_{\delta}$.
\end{lemma}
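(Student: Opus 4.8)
The plan is to read off $X'_\delta$ from the commutative square of Lemma~\ref{143743_30Aug18}, reduce the asserted equality to a density statement on $X_\delta$, and then settle that density statement using the fibration structure of $X_\delta$ over $E$.

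First I would combine Lemma~\ref{143743_30Aug18} with Lemma~\ref{143849_2Oct18}. The latter says that $\beta^{1}_{\delta}$ restricts to an isomorphism $Y_{\delta}^{\circ}\overset{\sim}{\to}(B_{\delta}\cap P)\setminus D_{\delta,\mathrm{sing}}$; feeding this into the commutative square gives $\Psi^{B}_{J(C),\delta}\bigl((B_{\delta}\cap P)\setminus D_{\delta,\mathrm{sing}}\bigr)=\nu_{\delta}(\psi_{\delta}(Y_{\delta}^{\circ}))$, and hence $X'_{\delta}=\overline{\nu_{\delta}(\psi_{\delta}(Y_{\delta}^{\circ}))}$. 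Since $X_{\delta}$ is projective (Lemma~\ref{173307_8Sep18}), the set $\nu_{\delta}(X_{\delta})$ is closed, and it obviously contains $\nu_{\delta}(\psi_{\delta}(Y_{\delta}^{\circ}))$, so the inclusion $X'_{\delta}\subseteq\nu_{\delta}(X_{\delta})$ is immediate. For the opposite inclusion it suffices to show that $\psi_{\delta}(Y_{\delta}^{\circ})$ is dense in $X_{\delta}$: then by continuity of $\nu_{\delta}$ one gets $\nu_{\delta}(X_{\delta})=\nu_{\delta}(\overline{\psi_{\delta}(Y_{\delta}^{\circ})})\subseteq\overline{\nu_{\delta}(\psi_{\delta}(Y_{\delta}^{\circ}))}=X'_{\delta}$.

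To prove this density I would argue through $X_{\delta}$, not through $Y_{\delta}$. From the construction in the proof of Lemma~\ref{173307_8Sep18}, the second projection $X_{\delta}\to E$ is the base change of $\gamma_{\delta,p_{0}}\colon E^{(n-2)}\to J(E)$ along $(-2)_{J(E)}\circ\iota_{p_{0}}$, and its fibres are complete linear systems of degree $n-2>0$ on the elliptic curve $E$, hence nonempty and connected (projective spaces). Thus $X_{\delta}\to E$ is a proper surjective morphism with connected fibres over the connected base $E$, so $X_{\delta}$ is connected, and being nonsingular it is irreducible of dimension $n-2$. Next, $\phi^{(n-2)}\colon C^{(n-2)}\to E^{(n-2)}$ is a finite morphism between nonsingular varieties of equal dimension, hence flat, so its base change $\psi_{\delta}\colon Y_{\delta}\to X_{\delta}$ is finite and flat; therefore $Y_{\delta}$ has pure dimension $n-2$, and each of its irreducible components is carried onto the irreducible variety $X_{\delta}$ by the finite morphism $\psi_{\delta}$. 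Finally $Y_{\delta}^{\circ}$ is a nonempty open subset of $Y_{\delta}$: by Lemma~\ref{154952_6Aug18} and Lemma~\ref{153323_22Sep18} it is identified with $(B^{1}_{\delta}\cap P)\setminus W_{\delta,\mathrm{sing}}$, which is nonempty because $\dim(B^{1}_{\delta}\cap P)=n-2>n-3=\dim W_{\delta,\mathrm{sing}}$ (Lemma~\ref{135618_16Aug18}). Picking an irreducible component $Z$ of $Y_{\delta}$ meeting $Y_{\delta}^{\circ}$, the set $Z\cap Y_{\delta}^{\circ}$ is dense open in $Z$ and $\psi_{\delta}(Z)=X_{\delta}$, so $\psi_{\delta}(Y_{\delta}^{\circ})$ is dense in $X_{\delta}$, which completes the argument.

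The step requiring the most care is the previous paragraph, precisely because one cannot shortcut it by claiming that $Y_{\delta}^{\circ}$ is dense in $Y_{\delta}$: the scheme $Y_{\delta}$ is a fibre product of two irreducible varieties over $E^{(n-2)}$ and may well be reducible, so a priori $Y_{\delta}^{\circ}$ could miss a component. The crux is therefore twofold: the irreducibility of $X_{\delta}$, which I would extract from the fibration $X_{\delta}\to E$ with connected fibres rather than from the \'{e}tale presentation, and the flatness of $\psi_{\delta}$, which forces $Y_{\delta}$ to be equidimensional over $X_{\delta}$ so that every component of $Y_{\delta}$ dominates $X_{\delta}$. Once these two points are established the remainder is a formal manipulation of images and closures.
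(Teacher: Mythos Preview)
Your argument is correct and follows the same skeleton as the paper: both use the commutative square of Lemma~\ref{143743_30Aug18} to obtain $X'_{\delta}=\overline{\nu_{\delta}(\psi_{\delta}(Y_{\delta}^{\circ}))}\subset\nu_{\delta}(X_{\delta})$, and both finish by a density-plus-continuity step. The difference lies in that last step. The paper simply asserts that $Y_{\delta}^{\circ}$ is dense in $Y_{\delta}$ and pushes forward; you instead prove the weaker statement that $\psi_{\delta}(Y_{\delta}^{\circ})$ is dense in $X_{\delta}$, by first establishing that $X_{\delta}$ is irreducible (via the $\mathbf{P}^{n-3}$-fibration over $E$) and that $\psi_{\delta}$ is finite flat, so every component of $Y_{\delta}$ surjects onto $X_{\delta}$. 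Your version is more careful: as you note, $Y_{\delta}$ is an \'etale degree-$4$ cover of $C^{(n-2)}$ and its connectedness is not addressed anywhere in the paper, so the bare assertion ``$Y_{\delta}^{\circ}$ is dense in $Y_{\delta}$'' is not obviously justified, whereas your detour through the irreducibility of $X_{\delta}$ sidesteps the issue entirely. The cost is a slightly longer argument; the gain is that you do not need to know whether $Y_{\delta}$ is connected.
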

\begin{proof}
 By Lemma~\ref{143743_30Aug18}, we have
 $$
 \Psi^{B}_{J(C),\delta}((B_{\delta}\cap{P})\setminus{D_{\delta,\mathrm{sing}}})=\nu_{\delta}(\psi_{\delta}(Y_{\delta}^{\circ}))\subset\nu_{\delta}(X_{\delta}),
 $$
 hence
 $X'_{\delta}=
 \overline{\nu_{\delta}(\psi_{\delta}(Y_{\delta}^{\circ}))}
 \subset\nu_{\delta}(X_{\delta})$
 and
 $Y_{\delta}^{\circ}\subset(\nu_{\delta}\circ\psi_{\delta})^{-1}(X'_{\delta})$.
 Since $Y_{\delta}^{\circ}$ is dense in $Y_{\delta}$, we have
 $Y_{\delta}\subset(\nu_{\delta}\circ\psi_{\delta})^{-1}(X'_{\delta})$
 and
 $\nu_{\delta}(X_{\delta})=(\nu_{\delta}\circ\psi_{\delta})(Y_{\delta})
 \subset{X'_{\delta}}$.
\end{proof}
\begin{lemma}\label{121957_22Sep18}
 If $N(\delta)-\eta\notin{J(E)_{2}}\setminus\{0\}$, then
 $\nu_{\delta}:X_{\delta}\rightarrow{X'_{\delta}}$ is the normalization of $X'_{\delta}$.
\end{lemma}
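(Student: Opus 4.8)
The plan is to prove that $\nu_\delta\colon X_\delta\to X'_\delta$ is a finite birational morphism. Since $X_\delta$ is nonsingular, hence normal, by Lemma~\ref{173307_8Sep18}, and $\nu_\delta(X_\delta)=X'_\delta$ by Lemma~\ref{123549_17Dec18}, this identifies $\nu_\delta$ with the normalization of $X'_\delta$. Throughout, write $\tau\colon E\to E$ for the translation determined by $[\mathcal{O}_E(\tau(p))]=[\mathcal{O}_E(p)]+\eta-N(\delta)$, so that $\nu_\delta(p_1+\dots+p_{n-2},p)=p_1+\dots+p_{n-2}+p+\tau(p)$ in $|\eta|$.

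First I would establish finiteness. Over a divisor $D\in|\eta|$, the fiber of $\nu_\delta$ consists of the $(p_1+\dots+p_{n-2},p)\in X_\delta$ with $p_1+\dots+p_{n-2}+p+\tau(p)=D$; here $p$ lies in $\mathrm{Supp}(D)$ and $p_1+\dots+p_{n-2}=D-p-\tau(p)$ is then uniquely determined, so $(p_1+\dots+p_{n-2},p)\mapsto p$ embeds the fiber into the finite set $\mathrm{Supp}(D)$. Hence $\nu_\delta$ is quasi-finite, and being proper (as $X_\delta$ is projective) it is finite; in particular $\dim X'_\delta=\dim X_\delta=n-2$. Next I would check separability by computing the differential of $\nu_\delta$ at a general point. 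Factoring $\nu_\delta$ as the restriction to $X_\delta$ of the addition morphism $\mu\colon E^{(n-2)}\times E\to E^{(n)}$, $\mu(p_1+\dots+p_{n-2},p)=p_1+\dots+p_{n-2}+p+\tau(p)$ (so $X_\delta=\mu^{-1}(|\eta|)$), one takes a general $x=(p_1+\dots+p_{n-2},p)$ for which $p_1,\dots,p_{n-2},p,\tau(p)$ are pairwise distinct (possible when $N(\delta)\ne\eta$) and uses the translation-invariant identification $T_qE\cong H^0(E,\Omega_E^1)^\vee$ together with the standard description of the differential of the Abel--Jacobi map. Since $d\tau=\mathrm{id}$ under this identification, one finds $d\nu_\delta(v_1,\dots,v_{n-2},w)=(v_1,\dots,v_{n-2},w,w)$ inside $T_{\nu_\delta(x)}E^{(n)}$, which is injective; hence $\nu_\delta$ is generically \'etale and $k(X_\delta)/k(X'_\delta)$ is separable. (When $N(\delta)=\eta$ one has $\tau=\mathrm{id}$ and $\nu_\delta(x)=p_1+\dots+p_{n-2}+2p$, and the same conclusion holds because $q\mapsto 2q$ is \'etale onto its image in characteristic $\ne2$, so its differential at $p$ is injective into $T_{2p}E^{(2)}$.)

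The essential point is generic injectivity. By the finiteness argument, for $D\in X'_\delta$ the fiber $\nu_\delta^{-1}(D)$ is identified with the set of $p'\in\mathrm{Supp}(D)$ such that $\tau(p')\in\mathrm{Supp}(D)$ and $p'+\tau(p')\le D$. Parametrizing $X'_\delta$ by a point $p\in E$ together with a divisor $p_1+\dots+p_{n-2}\in|N(\delta)-2p|$ — a complete linear system of degree $n-2\ge1$, base-point-free once $n\ge4$ — I would show that for general $(p,p_1+\dots+p_{n-2})$ the only such $p'$ is $p$ itself. Any such $p'$ lies in $\{p_1,\dots,p_{n-2},p,\tau(p)\}$; the hypothesis $N(\delta)-\eta\notin J(E)_2\setminus\{0\}$ ensures $\tau^2(p)\notin\{p,\tau(p)\}$, so the choice $p'=\tau(p)$ would force $\tau^2(p)\in\{p_1,\dots,p_{n-2}\}$, and each choice $p'=p_i$ likewise forces one of $\tau(p_i)=p$, $\tau(p_i)=\tau(p)$, $\tau(p_i)=p_j$ with $j\ne i$, the degenerate cases $\tau(p_i)=p_i$ and $\tau(p_i)=\tau(p)$ being excluded because $\tau\ne\mathrm{id}$ off the separately treated locus $N(\delta)=\eta$ (where $D$ carries a distinguished double point that pins down $p$). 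For $n\ge4$ base-point-freeness of $|N(\delta)-2p|$ makes each surviving condition of codimension $\ge1$ in the $(n-2)$-dimensional parameter space; for $n=3$ the divisor $p_1$ is determined by $p$ and each condition cuts $p$ down to a finite set. Hence the general $D\in X'_\delta$ satisfies $\#\nu_\delta^{-1}(D)=1$, and with separability this shows $\nu_\delta$ is birational.

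I expect the last step to be the main obstacle: organizing the combinatorics of which points of $D$ can play the role of $p'$ and verifying that every alternative to $p'=p$ is genuinely a positive-codimension condition. The hypothesis enters precisely here — it prevents $\tau$ from being an involution, for if $N(\delta)-\eta$ were a nonzero $2$-torsion point then $\tau(p)$ would always furnish a second element of $\nu_\delta^{-1}(D)$ and $\nu_\delta$ would be $2{:}1$ onto $X'_\delta$ rather than birational.
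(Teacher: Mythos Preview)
Your proof is correct and takes essentially the same approach as the paper: establish that $\nu_\delta$ is a finite birational morphism from the nonsingular variety $X_\delta$, with the hypothesis used exactly to prevent the translation $\tau$ from being a nontrivial involution. The paper packages the birationality argument by exhibiting explicit bad loci $\Image(\alpha_\delta^{\pm})$, $\Image(\mu_\delta)$, $\Image(\nu_\delta^2)\subset E^{(n)}$ and showing $\nu_\delta$ is a closed immersion over their complement, whereas you separate the tangent computation (separability) from generic injectivity and handle the latter by a dimension count on the source---but the underlying combinatorics on $p'+\tau(p')\le D$ is identical.
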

\begin{proof}
 We set morphisms
 $\alpha_{\delta}^{\pm}:E^{(n-3)}\times{E}\rightarrow{E^{(n)}}$, $\mu_{\delta}:E^{(n-3)}\times{E}\rightarrow{E^{(n)}}$ and $\nu_{\delta}^{2}:E^{(n-4)}\times{E^{(2)}}\rightarrow{E^{(n)}}$ by
 \begin{align*}
  \alpha_{\delta}^{+}:\,
  E^{(n-3)}(k)\times{E(k)}&\longrightarrow{E^{(n)}(k)};\\
  (p_{1}+\dots+p_{n-3},p)&\longmapsto
  p_{1}+\dots+p_{n-3}+2p+t_{\eta-N(\delta)}(p),\\
  \alpha_{\delta}^{-}:\,
  E^{(n-3)}(k)\times{E(k)}&\longrightarrow{E^{(n)}(k)};\\
  (p_{1}+\dots+p_{n-3},p)&\longmapsto
  p_{1}+\dots+p_{n-3}+2p+t_{N(\delta)-\eta}(p),\\
  \mu_{\delta}:\,
  E^{(n-3)}(k)\times{E(k)}&\longrightarrow{E^{(n)}(k)};\\
  (p_{1}+\dots+p_{n-3},p)&\longmapsto
  p_{1}+\dots+p_{n-3}+p+t_{\eta-N(\delta)}(p)+t_{N(\delta)-\eta}(p)
 \end{align*}
 and
 \begin{align*}
  \nu^{2}_{\delta}:\,
 E^{(n-4)}(k)\times{E^{(2)}(k)}
 &\longrightarrow{E^{(n)}(k)};\\
 (p_{1}+\dots+p_{n-3},p+p')&\longmapsto
 p_{1}+\dots+p_{n-4}+p+t_{\eta-N(\delta)}(p)+p'+t_{\eta-N(\delta)}(p').
 \end{align*}
 By the natural inclusion
 $X'_{\delta}\subset|\eta|\subset{E^{(n)}}$,
 the subset
 $$
 U=X'_{\delta}\setminus(
 \Image{(\alpha_{\delta}^{+})}\cup\Image{(\alpha_{\delta}^{-})}\cup
 \Image{(\mu_{\delta})}\cup\Image{(\nu^{2}_{\delta})}),
 $$
 is open dense in $X'_{\delta}$, where we consider as $\Image{(\nu^{2}_{\delta})}=\emptyset$ if $n=3$.
 We show that the morphism
 $$
 \nu_{\delta}:\,
 \nu_{\delta}^{-1}(U)\longrightarrow
 E^{(n)}\setminus(
 \Image{(\alpha_{\delta}^{+})}\cup\Image{(\alpha_{\delta}^{-})}\cup
 \Image{(\mu_{\delta})}\cup\Image{(\nu^{2}_{\delta})})
 $$
 is a closed immersion.
 For $u=p_{1}+\dots+p_{n}\in{U(k)}$, we assume that
 $$
 p+t_{\eta-N(\delta)}(p)\leq
 p_{1}+\dots+p_{n}
 \quad\text{and}\quad
 p'+t_{\eta-N(\delta)}(p')\leq
 p_{1}+\dots+p_{n}
 $$
 for some $p\neq{p'}\in{E(k)}$.
 Since $u\notin\Image{(\nu_{\delta}^{2})}$, we have
 $$
 t_{\eta-N(\delta)}(p)=p'\quad\text{or}\quad
 p=t_{\eta-N(\delta)}(p'),
 $$
 and furthermore $u\notin\Image{(\mu_{\delta})}$ implies that
 $$
 t_{\eta-N(\delta)}(p)=p'\quad\text{and}\quad
 p=t_{\eta-N(\delta)}(p'),
 $$
 hence
 $N(\delta)-\eta\in{J(E)_{2}}\setminus\{0\}$.
 This means that
 $\nu_{\delta}:\nu_{\delta}^{-1}(U)\rightarrow{U}$
 is bijective if $N(\delta)-\eta\notin{J(E)_{2}}\setminus\{0\}$.
 In the following, we prove that the homomorphism
 $$
 T_{x}(E^{(n-2)}\times{E})\longrightarrow
 T_{\nu_{\delta}(x)}(E^{(n)})
 $$
 on the tangent spaces is injective for $x\in\nu_{\delta}^{-1}(U)$.
 Let $\tilde{\nu}_{\delta}:E^{(n-2)}\times{E}\rightarrow{E^{(n-2)}\times{E^{(2)}}}$ be the morphism defined by
 \begin{align*}
  \tilde{\nu}_{\delta}:\,E^{(n-2)}(k)\times{E(k)}&\longrightarrow{E^{(n-2)}(k)\times{E^{(2)}(k)}};\\
  (p_{1}+\dots+p_{n-2},p)&\longmapsto{(p_{1}+\dots+p_{n-2},p+t_{\eta-N(\delta)}(p))}.
 \end{align*}
 If $N(\delta)-\eta\notin{J(E)_{2}}\setminus\{0\}$, then the morphism $\tilde{\nu}_{\delta}$ is a closed immersion.
 For $x\in\nu_{\delta}^{-1}(U)$, the image $\tilde{\nu}_{\delta}(x)$ is not contained in the ramification divisor of the natural covering $E^{(n-2)}\times{E^{(2)}}\rightarrow{E^{(n)}}$, because $\nu_{\delta}(x)\notin\Image{(\alpha_{\delta}^{+})}\cup\Image{(\alpha_{\delta}^{-})}$.
 Hence the homomorphism
 $$
 T_{x}(E^{(n-2)}\times{E})\hookrightarrow
 T_{\tilde{\nu}_{\delta}(x)}(E^{(n)}\times{E^{(2)}})\cong
 T_{\nu_{\delta}(x)}(E^{(n)})
 $$
 is invective.
 By Lemma~\ref{173307_8Sep18}, the finite birational morphism $\nu_{\delta}:X_{\delta}\rightarrow{X'_{\delta}}$ gives the normalization of $X'_{\delta}$.
\end{proof}
\begin{remark}
 If $N(\delta)-\eta\in{J(E)_{2}}\setminus\{0\}$, then $\nu_{\delta}:X_{\delta}\rightarrow{X'_{\delta}}$ is a covering of degree $2$.
\end{remark}
\subsection{The branch locus of the restricted Gauss maps}
Let $R_{\delta}\subset{Y_{\delta}}$ be the divisor defined by
$$
R_{\delta}(k)
=\{(q_{1}+\dots+q_{n-2},p)\in{Y_{\delta}(k)}\mid
\text{$p_{i}=\sigma(p_{j})$ for some $i\neq{j}$}\}.
$$
\begin{lemma}\label{165654_18Nov18}
 $\beta_{\delta}^{1}(R_{\delta})\subset{W_{\delta,\mathrm{sing}}}$.
\end{lemma}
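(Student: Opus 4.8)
The plan is to unwind all the definitions and reduce the statement to an elementary estimate of $h^{0}$ on $C$, following the philosophy that a $\sigma$-conjugate pair sitting inside an effective divisor is pulled back from $E$.

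First I would make both sides explicit. By Lemma~\ref{173307_8Sep18}, $Y_{\delta}$ is a nonsingular variety over the algebraically closed field $k$, hence so is its divisor $R_{\delta}$ and its $k$-points are dense; since $W_{\delta,\mathrm{sing}}\subset J(C)$ is closed, it suffices to prove $\beta_{\delta}^{1}(y)\in W_{\delta,\mathrm{sing}}(k)$ for every $y\in R_{\delta}(k)$. Write $y=(q_{1}+\dots+q_{n-2},\phi(q))$, with the points of the effective divisor $q_{1}+\dots+q_{n-2}$ indexed so that the defining condition of $R_{\delta}$ reads $q_{1}=\sigma(q_{2})$ (the degenerate case $q_{1}=q_{2}$ a ramification point of $\phi$ is allowed and gives $q_{1}+q_{2}=2q_{2}=\phi^{*}\phi(q_{2})$). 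Recalling from the proof of Lemma~\ref{171048_17Sep18} that $\beta_{\delta}^{1}=\beta_{\delta}^{0}\circ\beta^{1}$, I get, with $L=\beta_{\delta}^{1}(y)$,
$$
L\otimes\delta\cong\mathcal{O}_{C}(q_{1}+\dots+q_{n-2}+q+\sigma(q)).
$$

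Second comes the one real observation: the effective divisor on the right contains the two $\sigma$-conjugate pairs $q_{1}+q_{2}=\phi^{*}\phi(q_{2})$ and $q+\sigma(q)=\phi^{*}\phi(q)$, hence dominates $\phi^{*}(\phi(q_{2})+\phi(q))$. Therefore
$$
h^{0}(C,L\otimes\delta)\geq h^{0}(C,\phi^{*}\mathcal{O}_{E}(\phi(q_{2})+\phi(q)))\geq h^{0}(E,\mathcal{O}_{E}(\phi(q_{2})+\phi(q)))=2,
$$
where the first inequality holds because adding an effective divisor does not decrease $h^{0}$, the second is injectivity of $\phi^{*}$ on global sections, and the equality is Riemann--Roch on the elliptic curve $E$ for a degree-$2$ line bundle. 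By the description of $W_{\delta,\mathrm{sing}}$ recalled at the start of Section~\ref{154705_28Oct18}, this says precisely $L\in W_{\delta,\mathrm{sing}}(k)$, and by the density reduction of the previous paragraph we conclude $\beta_{\delta}^{1}(R_{\delta})\subset W_{\delta,\mathrm{sing}}$.

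I do not expect a genuine obstacle here: the whole content is the remark that a $\sigma$-conjugate pair inside the divisor is pulled back from $E$ and so forces $h^{0}\geq 2$. The only points requiring a line of care are the degenerate case where the conjugate pair collapses to a ramification point of $\phi$ (handled identically via $q_{1}+q_{2}=\phi^{*}\phi(q_{2})$) and the routine passage from the statement on $k$-points to the asserted inclusion of subsets of $J(C)$.
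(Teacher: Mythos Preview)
Your argument is correct. The paper's proof is a one-liner: it observes that $\beta_{\delta}^{1}(R_{\delta})\subset B_{\delta}^{2}$ (since the two $\sigma$-conjugate points $q_{1},q_{2}$ together with $q,\sigma(q)$ exhibit the image as a value of $\beta_{\delta}^{2}$), and then quotes the inclusion $B_{\delta}^{2}\subset W_{\delta,\mathrm{sing}}$ from Lemma~\ref{171048_17Sep18}.

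Your route is slightly different and in fact more elementary. Rather than passing through $B_{\delta}^{2}$ and relying on the somewhat intricate proof of $B_{\delta}^{2}\subset W_{\delta,\mathrm{sing}}$ in Lemma~\ref{171048_17Sep18} (which argues on a dense open, constructs two distinct members of a residual linear system via auxiliary points $q',r'$, and then takes closure), you note directly that the effective divisor representing $L\otimes\delta$ dominates $\phi^{*}(\phi(q_{2})+\phi(q))$ and hence $h^{0}(C,L\otimes\delta)\geq h^{0}(E,\mathcal{O}_{E}(\phi(q_{2})+\phi(q)))=2$. This works pointwise for every $y\in R_{\delta}(k)$ with no genericity step. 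One small quibble: you assert $R_{\delta}$ is nonsingular, which is not immediate (divisors in nonsingular varieties need not be nonsingular), but you only use density of $k$-points in a closed subset over an algebraically closed field, which is automatic; so this does not affect the proof.
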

\begin{proof}
It is a consequence of Lemma~\ref{171048_17Sep18}, because $\beta_{\delta}^{1}(R_{\delta})\subset{B_{\delta}^{2}}$.
\end{proof}
Let $S_{\delta,r}\subset{Y_{\delta}}$ be the divisor defined by
$$
S_{\delta,r}(k)
=\{(q_{1}+\dots+q_{n-2},p)\in{Y_{\delta}(k)}\mid
q_{1}+\dots+q_{n-2}\geq{r}\}
$$
for $r\in\Ram{(\phi)}$.
Then the ramification divisor of $\psi_{\delta}:Y_{\delta}\rightarrow{X_{\delta}}$ is
$$
\Ram{(\psi_{\delta})}=R_{\delta}\cup\bigcup_{r\in\Ram{(\phi)}}S_{\delta,r}.
$$
\begin{lemma}\label{184552_17Sep18}
 $\beta_{\delta}^{1}(S_{\delta,r})\nsubseteq{W_{\delta,\mathrm{sing}}}$, and moreover $\beta_{\delta}^{1}(S_{\delta,r})\cap{W_{\delta,\mathrm{sing}}}=\emptyset$ if $n=3$.
\end{lemma}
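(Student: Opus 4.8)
The plan is to compute $h^{0}(C,L\otimes\delta)$ at a general point of $S_{\delta,r}$, where $L=\beta_{\delta}^{1}(b)$, and to show that this number equals $1$; this gives $L\notin W_{\delta,\mathrm{sing}}$ and hence $\beta_{\delta}^{1}(S_{\delta,r})\nsubseteq W_{\delta,\mathrm{sing}}$. When $n=3$ the same computation will apply to \emph{every} point of $S_{\delta,r}$, yielding the stronger statement $\beta_{\delta}^{1}(S_{\delta,r})\cap W_{\delta,\mathrm{sing}}=\emptyset$.

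First I would write a point of $S_{\delta,r}$ as $b=(q_{1}+\dots+q_{n-2},p)$ with $q_{1}=r\in\Ram(\phi)$ after relabelling, so that $\mathcal{O}_{E}(\phi(r)+\phi(q_{2})+\dots+\phi(q_{n-2})+2p)\cong N(\delta)$; choosing $q\in C$ with $\phi(q)=p$ gives $L\otimes\delta\cong\mathcal{O}_{C}(D)$ where $D=r+q_{2}+\dots+q_{n-2}+q+\sigma(q)$ has degree $n=g-1$. By Riemann--Roch and Serre duality $h^{0}(C,\mathcal{O}_{C}(D))=h^{0}(C,\Omega_{C}^{1}\otimes\mathcal{O}_{C}(-D))$, and since $\Omega_{C}^{1}\cong\phi^{*}\eta$ and $\mathcal{O}_{C}(q+\sigma(q))\cong\phi^{*}\mathcal{O}_{E}(p)$, this dual sheaf is $\phi^{*}\mathcal{M}_{p}\otimes\mathcal{O}_{C}(-r-q_{2}-\dots-q_{n-2})$ with $\mathcal{M}_{p}:=\eta\otimes\mathcal{O}_{E}(-p)$ of degree $n-1$.

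The key point is that $\mathcal{M}_{p}\otimes\eta^{-1}\cong\mathcal{O}_{E}(-p)$ has negative degree, so the decomposition $\phi_{*}\mathcal{O}_{C}\cong\mathcal{O}_{E}\oplus\eta^{-1}$ makes $\phi^{*}\colon H^{0}(E,\mathcal{M}_{p})\to H^{0}(C,\phi^{*}\mathcal{M}_{p})$ an isomorphism; every section of $\phi^{*}\mathcal{M}_{p}$ is therefore a pullback, and as long as the images $\phi(r),\phi(q_{2}),\dots,\phi(q_{n-2})$ are pairwise distinct, $\phi^{*}s$ vanishes along $r+q_{2}+\dots+q_{n-2}$ if and only if $s$ vanishes at $\phi(r)+\phi(q_{2})+\dots+\phi(q_{n-2})$. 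Hence $h^{0}(C,\Omega_{C}^{1}\otimes\mathcal{O}_{C}(-D))=h^{0}(E,\mathcal{M}_{p}\otimes\mathcal{O}_{E}(-\phi(r)-\phi(q_{2})-\dots-\phi(q_{n-2})))$, and the line bundle on the right has degree $(n-1)-(n-2)=1$ on the elliptic curve $E$, so its space of sections is $1$-dimensional and $h^{0}(C,L\otimes\delta)=1$.

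It remains to verify the distinctness hypothesis for a general $b\in S_{\delta,r}$. Projecting $S_{\delta,r}$ onto the $p$-coordinate, the fibre over a general $p$ is carried by $\phi^{(n-2)}$ onto the linear system $|N(\delta)-\phi(r)-2p|$ on $E$, which has degree $n-3$: for $n\geq5$ it is base-point-free, so its general member is supported on $n-3$ distinct points all different from $\phi(r)$; for $n=4$ it is a single point, which is $\neq\phi(r)$ for general $p$; and for $n=3$ there are no $q_{i}$ at all and the condition is empty. This proves $\beta_{\delta}^{1}(S_{\delta,r})\nsubseteq W_{\delta,\mathrm{sing}}$, and when $n=3$, since the hypothesis is automatic, it gives $\beta_{\delta}^{1}(S_{\delta,r})\cap W_{\delta,\mathrm{sing}}=\emptyset$. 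The step I expect to require the most care is the vanishing equivalence used above: a coincidence $\phi(q_{i})=\phi(q_{j})$ among the $q_{i}$ (a hidden $\sigma$-orbit), or the equality $q_{i}=r$, would impose strictly fewer linear conditions on $H^{0}(E,\mathcal{M}_{p})$ and could push $h^{0}(C,L\otimes\delta)$ above $1$, so the genericity argument on the degree-$(n-3)$ system on $E$ is essential; the rest is bookkeeping with degrees, using that on an elliptic curve the cohomology of a line bundle is determined by its degree once that degree is positive.
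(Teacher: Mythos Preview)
Your argument is correct and takes a genuinely different route from the paper's. The paper proceeds indirectly: it introduces the auxiliary locus $W_{\delta,r}^{1}=\{L\mid h^{0}(C,L\otimes\mathcal{O}_{C}(-r)\otimes\delta)>1\}$, bounds its dimension by Martens' theorem ($\dim W_{\delta,r}^{1}\le n-4$, in particular empty for $n=3$), and then uses the structure result Lemma~\ref{115339_19Aug18} on canonical divisors containing a $\sigma$-orbit to prove a parity statement---every member of $|L\otimes\delta|$ has odd multiplicity at $r$ once $L\in T_{\delta,r}\setminus T_{\delta,2r}$---which forces $T_{\delta,r}\cap W_{\delta,\mathrm{sing}}\subset T_{\delta,2r}\cup W_{\delta,r}^{1}$ and finishes by dimension counting. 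You instead compute $h^{0}(C,L\otimes\delta)$ directly from the decomposition $\phi_{*}\mathcal{O}_{C}\cong\mathcal{O}_{E}\oplus\eta^{-1}$: since $\deg(\mathcal{M}_{p}\otimes\eta^{-1})=-1<0$, every section of $\phi^{*}\mathcal{M}_{p}$ is a pullback, and the vanishing conditions at $r,q_{2},\dots,q_{n-2}$ descend to $n-2$ independent conditions on $H^{0}(E,\mathcal{M}_{p})$ whenever the $\phi$-images are distinct, giving $h^{0}=1$. This is more elementary---it avoids both Martens' theorem and the parity lemma---and makes the $n=3$ case completely transparent (there are no $q_{i}$, so no genericity is needed). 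The paper's approach, on the other hand, yields the finer statement $T_{\delta,r}\cap W_{\delta,\mathrm{sing}}=T_{\delta,2r}\cup(T_{\delta,r}\cap W_{\delta,r}^{1})$, an explicit description of the singular locus rather than just non-containment.
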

\begin{proof}
 Let $W_{\delta,r}^{1}\subset{J(C)}$ be the subvariety defined by
 $$
 W_{\delta,r}^{1}(k)=\{L\in\Pic^{0}{(C)}\mid
 h^{0}(C,\mathcal{L}\otimes\mathcal{O}_{C}(-r)\otimes\delta)>1\},
 $$
 and let $T_{\delta,r}\subset{J(C)}$ be the image of the morphism
 \begin{align*}
  C^{(n-3)}(k)\times{E(k)}&\longrightarrow\Pic^{0}{(C)}=J(C)(k);\\
  (q_{1}+\dots+q_{n-3},\phi(q))&\longmapsto
  \mathcal{O}_{C}(q_{1}+\dots+q_{n-3}+r+q+\sigma(q))\otimes\delta^{\vee}.
 \end{align*}
 Since $C$ is not a hyperelliptic curve, by Martens' theorem~\cite[Theorem~1]{Ma}, we have $\dim{W_{\delta,r}^{1}}\leq{n-4}$ and $T_{\delta,r}\nsubseteq{W_{\delta,r}^{1}}$, hence $\dim{T_{\delta,r}}=n-2$.
 We remark that $W_{\delta,r}^{1}=\emptyset$ in the case when $n=3$.
 Since $\beta_{\delta}^{1}(S_{\delta,r})=T_{\delta,r}\cap{P}$ is the fiber of the composition
 $$
 T_{\delta,r}\subset{J(C)}\overset{N}{\rightarrow}{J(E)}
 $$
 at $0\in{J(E)}$, we have $\dim{\beta_{\delta}^{1}(S_{\delta,r})}=n-3$.
 Let $T_{\delta,2r}\subset{T_{\delta,r}}$ be the image of the morphism
 \begin{align*}
  C^{(n-4)}(k)\times{E(k)}&\longrightarrow\Pic^{0}{(C)}=J(C)(k);\\
  (q_{1}+\dots+q_{n-4},\phi(q))&\longmapsto
  \mathcal{O}_{C}(q_{1}+\dots+q_{n-4}+2r+q+\sigma(q))\otimes\delta^{\vee}.
 \end{align*}
 Since $2r=r+\sigma(r)$, we have $T_{\delta,2r}\subset{B_{\delta}^{2}}\subset{W_{\delta,\mathrm{sing}}}$ by Lemma~\ref{171048_17Sep18}.
 For
 $L\in{(T_{\delta,r}(k)\cap{W_{\delta,\mathrm{sing}}}(k))\setminus
 T_{\delta,2r}(k)}$,
 there is $(q_{1}+\dots+q_{n-3},\phi(q))\in{C^{(n-3)}(k)\times{E(k)}}$
 such that
 $L=\mathcal{O}_{C}(q_{1}+\dots+q_{n-3}+r+q+\sigma(q))
 \otimes\delta^{\vee}$
 and $r\notin\{q_{1},\dots,q_{n-3}\}$.
 If
 $$
 q'_{1}+\dots+q'_{n-i}+ir
 \in|\Omega_{C}^{1}(-q_{1}-\dots-q_{n-3}-r-q-\sigma(q))|
=|\Omega_{C}^{1}\otimes(L\otimes\delta)^{\vee}|
 $$
 and $r\notin\{q'_{1},\dots,q'_{n-i}\}$, then by Lemma~\ref{115339_19Aug18}, the number $i$ is odd.
 By the same way, any member in the linear system
 $|\Omega_{C}^{1}(-q'_{1}-\dots-q'_{n-i}-ir)|=|L\otimes\delta|$
 has an odd multiplicity at $r$.
 It implies that $h^{0}(C,L\otimes\mathcal{O}_{C}(-r)\otimes\delta)=h^{0}(C,L\otimes\delta)>1$.
 Hence we have
 $$
 T_{\delta,r}\cap{W_{\delta,\mathrm{sing}}}
 =T_{\delta,2r}\cup(T_{\delta,r}\cap{W_{\delta,r}^{1}}).
 $$
 When $n=3$, it implies that $T_{\delta,r}\cap{W_{\delta,\mathrm{sing}}}=\emptyset$.
 When $n\geq4$,
 $$
 \beta_{\delta}^{1}(S_{\delta,r})\cap{W_{\delta,\mathrm{sing}}}
 =(T_{\delta,2r}\cap{P})\cup(\beta_{\delta}^{1}(S_{\delta,r})
 \cap{W_{\delta,r}^{1}})
 $$
 is a proper closed subset of
 $\beta_{\delta}^{1}(S_{\delta,r})=T_{\delta,r}\cap{P}$, because $\dim{(T_{\delta,2r}\cap{P})}\leq{n-4}$ and $\dim{W_{\delta,r}^{1}}\leq{n-4}$.
\end{proof}
Let $Z_{\delta,r}=\psi_{\delta}(S_{\delta,r})$ be the image of $S_{\delta,r}$ by $\psi_{\delta}:Y_{\delta}\rightarrow{X_{\delta}}$.
Then
$$
Z_{\delta,r}(k)=\{(p_{1}+\dots+p_{n-2},p)\in{X_{\delta}}\mid
\phi(r)\leq{p_{1}+\dots+p_{n-2}}\}.
$$
\begin{lemma}\label{175903_15Sep18}
 If $n\geq4$, then $Z_{\delta,r}$ is irreducible.
 If $n=3$, then $X_{\delta}\cong{E}$, and $Z_{\delta,r}\subset{X_{\delta}}$ is a $J(X_{\delta})_{2}$-orbit by the natural action of $J(X_{\delta})$ on the curve $X_{\delta}$ of genus $1$.
\end{lemma}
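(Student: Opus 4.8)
The plan is to describe $Z_{\delta,r}$ explicitly by ``deleting the ramification point $\phi(r)$'', which realises it as the same kind of object as $X_{\delta}$ but with $n$ lowered by one. The closed immersion $E^{(n-3)}\hookrightarrow{E^{(n-2)}}$, $D\mapsto\phi(r)+D$, carries the set-theoretic description of $Z_{\delta,r}$ recorded before the lemma onto the fibre over $0\in{J(E)}$ of the morphism
$$
E^{(n-3)}\times{E}\longrightarrow{J(E)};\qquad
(D,p)\longmapsto\mathcal{O}_{E}(D+2p)\otimes\bigl(N(\delta)\otimes\mathcal{O}_{E}(-\phi(r))\bigr)^{\vee}.
$$
Thus $Z_{\delta,r}$ is exactly the variety ``$X_{\delta}$'' of Lemma~\ref{173307_8Sep18} with $N(\delta)$ replaced by $N(\delta)\otimes\mathcal{O}_{E}(-\phi(r))$ and $n-2$ replaced by $n-3$, and I would run the same argument as in that proof.

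Suppose $n\geq4$. I would factor the morphism above as $E^{(n-3)}\times{E}\to\Pic^{n-3}{(E)}\times{E}\to{J(E)}$, the first arrow being the Abel--Jacobi map on the first factor and the second sending $(L',p)$ to ${L'}\otimes\mathcal{O}_{E}(2p)\otimes\bigl(N(\delta)\otimes\mathcal{O}_{E}(-\phi(r))\bigr)^{\vee}$. The fibre of the second arrow over $0$ is isomorphic to $E$ via the projection to $p$, and the induced morphism $E\to\Pic^{n-3}{(E)}$, $p\mapsto{N(\delta)}\otimes\mathcal{O}_{E}(-\phi(r)-2p)$, is finite \'{e}tale of degree $4$ because $\mathrm{char}\,k\neq2$. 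Hence $Z_{\delta,r}$ is the base change of the Abel--Jacobi map $E^{(n-3)}\to\Pic^{n-3}{(E)}$ along this \'{e}tale covering; in particular $Z_{\delta,r}\to{E^{(n-3)}}$ is finite \'{e}tale, so $Z_{\delta,r}$ is nonsingular. The other projection $Z_{\delta,r}\to{E}$ is proper and surjective, with fibres the complete linear systems $|N(\delta)\otimes\mathcal{O}_{E}(-\phi(r)-2p)|\cong\mathbf{P}^{n-4}$, which are connected; since $E$ is connected, so is $Z_{\delta,r}$, and being nonsingular and connected it is irreducible. (For $n=4$ this just says $Z_{\delta,r}\cong{E}$.)

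Suppose $n=3$. Then $E^{(n-2)}=E$, and the same ``the class is forced'' computation shows the second projection $X_{\delta}\to{E}$ is an isomorphism, since for each $p\in{E}$ there is a unique $p_{1}\in{E}$ with $\mathcal{O}_{E}(p_{1})\cong{N(\delta)}\otimes\mathcal{O}_{E}(-2p)$. In particular $X_{\delta}$ is a curve of genus $1$, and by functoriality this isomorphism identifies the translation action of $J(X_{\delta})$ on $X_{\delta}$ with that of $J(E)$ on $E$. Under it,
$$
Z_{\delta,r}=\{(\phi(r),p)\in{X_{\delta}}\}\cong\{p\in{E}\mid\mathcal{O}_{E}(2p)\cong{N(\delta)}\otimes\mathcal{O}_{E}(-\phi(r))\},
$$
which is a fibre of the surjective morphism $E\to\Pic^{2}{(E)}$, $p\mapsto\mathcal{O}_{E}(2p)$. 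Since, up to the standard identifications, this morphism agrees with multiplication by $2$ on $E$ and $\mathrm{char}\,k\neq2$, the fibre is a coset of $J(E)_{2}$, that is, a $J(X_{\delta})_{2}$-orbit.

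The proof is essentially this computation, so I do not expect a genuine geometric obstacle. The points that do require care are bookkeeping: one must check that the ``delete $\phi(r)$'' identification and the base-change description are compatible with the reduced (in fact nonsingular) scheme structure that $Z_{\delta,r}=\psi_{\delta}(S_{\delta,r})$ carries, so that irreducibility transfers; and, in the case $n=3$, one must confirm that the isomorphism $X_{\delta}\cong{E}$ is equivariant for the group actions --- which is automatic, but should be said.
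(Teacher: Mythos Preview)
Your proposal is correct and follows essentially the same approach as the paper. The paper records the identical base-change diagram (using the closed immersion $i_{r}:E^{(n-3)}\hookrightarrow E^{(n-2)}$, $D\mapsto D+\phi(r)$, against the \'{e}tale degree-$4$ covering $E\to J(E)$ from Lemma~\ref{173307_8Sep18}) to conclude that $Z_{\delta,r}$ is a $\mathbf{P}^{n-4}$-bundle over $E$ for $n\geq4$, and for $n=3$ gives the same description $Z_{\delta,r}\cong\{p\in E\mid\mathcal{O}_{E}(\phi(r)+2p)\cong N(\delta)\}$ as a $J(E)_{2}$-orbit; your extra remarks on scheme structure and equivariance are sound but not needed beyond what the paper states.
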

\begin{proof}
 Let $\gamma_{\delta,p_{0}}:E^{(n-2)}\rightarrow{J(E)}$ be the morphism given in the proof of Lemma~\ref{173307_8Sep18} for fixed $p_{0}\in{E(k)}$, and let $i_{r}:E^{(n-3)}\rightarrow{E^{(n-2)}}$ be the morphism defined by
 $$
 i_{r}:\,E^{(n-3)}(k)\longrightarrow{E^{(n-2)}(k)};\,
 p_{1}+\dots+p_{n-3}\longmapsto{p_{1}+\dots+p_{n-3}+\phi(r)}.
 $$
 If $n\geq4$, then $Z_{\delta,r}$ is a $\mathbf{P}^{n-4}$-bundle over $E$ by the base change
 $$
 \begin{array}{ccccl}
  Z_{\delta,r}&{\longrightarrow}&{X_{\delta}}&
   \overset{\mathrm{pr}_{2}}{\longrightarrow}&\quad{E}\\
  \downarrow&\Box&\quad\downarrow\overset{\mathrm{pr}_{1}}{}&\Box&\quad\,\downarrow(-2)_{J(E)}\circ\iota_{p_{0}}\\
  E^{(n-3)}&\underset{i_{r}}{\longrightarrow}&E^{(n-2)}&\underset{\gamma_{\delta,p_{0}}}{\longrightarrow}&J(E)\\
 \end{array}
 $$
 of the $\mathbf{P}^{n-4}$-bundle
 $\gamma_{\delta,p_{0}}\circ{i_{r}}:E^{(n-3)}\rightarrow{J(E)}$, hence $Z_{\delta,r}$ is irreducible.
 If $n=3$, then $\mathrm{pr}_{2}:X_{\delta}{\rightarrow}{E}$ is an isomorphism, and
 $$
 Z_{\delta,r}\cong\{p\in{E(k)}\mid
 {\mathcal{O}_{E}(\phi(r)+2p)}\cong{N(\delta)}\}
 $$
 is an orbit of $J(E)_{2}$-action.
\end{proof}
We denote by $\Ram{(\psi_{\delta}^{\circ})}\subset{Y_{\delta}^{\circ}}$ the ramification divisor of $\psi_{\delta}^{\circ}=\psi_{\delta}\vert_{Y_{\delta}^{\circ}}:Y_{\delta}^{\circ}\rightarrow{X_{\delta}}$.
\begin{lemma}\label{113332_24Nov18}
 $$
 \overline{\psi_{\delta}^{\circ}(\Ram{(\psi_{\delta}^{\circ})})}
 =\bigcup_{r\in\Ram{(\phi)}}Z_{\delta,r}.
 $$
\end{lemma}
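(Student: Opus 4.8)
The plan is to restrict the already-established decomposition $\Ram(\psi_{\delta})=R_{\delta}\cup\bigcup_{r\in\Ram(\phi)}S_{\delta,r}$ of the ramification divisor to the open subset $Y_{\delta}^{\circ}\subset Y_{\delta}$, to observe that the component coming from $R_{\delta}$ disappears, and to identify the image under $\psi_{\delta}$ of each surviving piece $S_{\delta,r}\cap Y_{\delta}^{\circ}$ with a dense subset of $Z_{\delta,r}$.

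First I would check that $Y_{\delta}^{\circ}$ is open in $Y_{\delta}$: by Lemma~\ref{154952_6Aug18} and Lemma~\ref{153323_22Sep18} one has $(B_{\delta}\cap P)\setminus D_{\delta,\mathrm{sing}}=(B^{1}_{\delta}\cap P)\setminus W_{\delta,\mathrm{sing}}$, so $Y_{\delta}^{\circ}=(\beta^{1}_{\delta}\vert_{Y_{\delta}})^{-1}(J(C)\setminus W_{\delta,\mathrm{sing}})$ is the preimage of an open set. Since $\psi_{\delta}^{\circ}$ is the restriction of the finite morphism $\psi_{\delta}$ to this open set and $Y_{\delta}$ is nonsingular by Lemma~\ref{173307_8Sep18}, the ramification divisor is local on the source, so $\Ram(\psi_{\delta}^{\circ})=\Ram(\psi_{\delta})\cap Y_{\delta}^{\circ}$. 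Now $\beta^{1}_{\delta}(R_{\delta})\subset W_{\delta,\mathrm{sing}}$ by Lemma~\ref{165654_18Nov18}, while $\beta^{1}_{\delta}(Y_{\delta}^{\circ})=(B_{\delta}\cap P)\setminus D_{\delta,\mathrm{sing}}$ is disjoint from $W_{\delta,\mathrm{sing}}$ by Lemma~\ref{153323_22Sep18}; hence $R_{\delta}\cap Y_{\delta}^{\circ}=\emptyset$, and $\Ram(\psi_{\delta}^{\circ})=\bigcup_{r\in\Ram(\phi)}(S_{\delta,r}\cap Y_{\delta}^{\circ})$. As Zariski closure commutes with finite unions, it then suffices to prove $\overline{\psi_{\delta}(S_{\delta,r}\cap Y_{\delta}^{\circ})}=Z_{\delta,r}$ for each $r\in\Ram(\phi)$.

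For a fixed $r$ the inclusion $\subset$ is immediate since $\psi_{\delta}(S_{\delta,r})=Z_{\delta,r}$ is closed. For $\supset$ I would first observe that $S_{\delta,r}\cap Y_{\delta}^{\circ}\neq\emptyset$: by Lemma~\ref{184552_17Sep18} some $y\in S_{\delta,r}$ has $\beta^{1}_{\delta}(y)\notin W_{\delta,\mathrm{sing}}$, and then $\beta^{1}_{\delta}(y)\in(B^{1}_{\delta}\cap P)\setminus W_{\delta,\mathrm{sing}}\subset(B_{\delta}\cap P)\setminus D_{\delta,\mathrm{sing}}$ by Lemma~\ref{154952_6Aug18} and Lemma~\ref{153323_22Sep18}, so $y\in Y_{\delta}^{\circ}$. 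If $n=3$, then by Lemma~\ref{184552_17Sep18} in fact $\beta^{1}_{\delta}(S_{\delta,r})\cap W_{\delta,\mathrm{sing}}=\emptyset$, whence $S_{\delta,r}\subset Y_{\delta}^{\circ}$ and $\psi_{\delta}(S_{\delta,r}\cap Y_{\delta}^{\circ})=Z_{\delta,r}$, a finite set equal to its own closure. If $n\geq4$, then $Z_{\delta,r}$ is irreducible by Lemma~\ref{175903_15Sep18}; identifying $S_{\delta,r}$ and $Z_{\delta,r}$ with the loci of divisors containing the fixed point $r$, respectively $\phi(r)$, the restriction $\psi_{\delta}\vert_{S_{\delta,r}}:S_{\delta,r}\rightarrow Z_{\delta,r}$ is the base change of $\phi^{(n-3)}:C^{(n-3)}\rightarrow E^{(n-3)}$ along $Z_{\delta,r}\rightarrow E^{(n-3)}$, and $\phi^{(n-3)}$, being a finite surjective morphism between the nonsingular varieties $C^{(n-3)}$ and $E^{(n-3)}$ of equal dimension, is finite flat, hence open. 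Therefore $\psi_{\delta}(S_{\delta,r}\cap Y_{\delta}^{\circ})$ is a nonempty open subset of the irreducible variety $Z_{\delta,r}$, hence dense, giving $\overline{\psi_{\delta}(S_{\delta,r}\cap Y_{\delta}^{\circ})}=Z_{\delta,r}$.

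I expect the last step to be the main obstacle: one must know that no whole irreducible component of $S_{\delta,r}$ is absorbed into the singular locus $W_{\delta,\mathrm{sing}}$ upon passing to $Y_{\delta}^{\circ}$. This is precisely what the non-containment $\beta^{1}_{\delta}(S_{\delta,r})\nsubseteq W_{\delta,\mathrm{sing}}$ of Lemma~\ref{184552_17Sep18} supplies, once combined with the flat base-change description of $\psi_{\delta}\vert_{S_{\delta,r}}$, which forces $S_{\delta,r}$ to be pure of dimension $n-3$ with every component dominating the irreducible $Z_{\delta,r}$ (so that a nonempty open subset necessarily meets every component). The low-dimensional case $n=3$ is genuinely exceptional and has to be separated off, since there $Z_{\delta,r}$ is a finite orbit rather than an irreducible variety.
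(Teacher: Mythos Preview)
Your proof is correct and follows essentially the same route as the paper's: restrict the decomposition $\Ram(\psi_{\delta})=R_{\delta}\cup\bigcup_{r}S_{\delta,r}$ to $Y_{\delta}^{\circ}$, kill $R_{\delta}$ via Lemma~\ref{165654_18Nov18}, and then show each $\psi_{\delta}(S_{\delta,r}\cap Y_{\delta}^{\circ})$ has closure $Z_{\delta,r}$ using Lemma~\ref{184552_17Sep18} and Lemma~\ref{175903_15Sep18}. The only difference is in the last step for $n\geq4$: the paper argues by dimension (the finite morphism $\psi_{\delta}$ preserves dimension, so the image is $(n-3)$-dimensional inside the irreducible $(n-3)$-dimensional $Z_{\delta,r}$), while you argue by openness via flat base change of $\phi^{(n-3)}$. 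Both are valid; the paper's dimension count is a bit quicker, but your base-change description has the virtue of making the structure of $\psi_{\delta}\vert_{S_{\delta,r}}$ explicit.
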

\begin{proof}
 Since $\Ram{(\psi_{\delta})}={R_{\delta}}\cup\bigcup_{r\in\Ram{(\phi)}}S_{\delta,r}$, by Lemma~\ref{165654_18Nov18}, we have $\Ram{(\psi_{\delta}^{\circ})}=\bigcup_{r\in\Ram{(\phi)}}S_{\delta,r}\cap{Y_{\delta}^{\circ}}$.
 By Lemma~\ref{184552_17Sep18}, $S_{\delta,r}\cap{Y_{\delta}^{\circ}}\neq\emptyset$ for $n\geq3$, and $S_{\delta,r}\cap{Y_{\delta}^{\circ}}=S_{\delta,r}$ for $n=3$.
 Since $\psi_{\delta}$ is a finite morphism, $\psi_{\delta}^{\circ}(S_{\delta,r}\cap{Y_{\delta}^{\circ}})$ is of dimension $n-3$.
 By Lemma~\ref{175903_15Sep18}, we have
 $\overline{\psi_{\delta}^{\circ}(S_{\delta,r}\cap{Y_{\delta}^{\circ}})}=\psi_{\delta}(S_{\delta,r})=Z_{\delta,r}$.
\end{proof}
Let $H_{r}\subset\mathbf{P}(H^{0}(E,\eta)^{\vee})$ be the hyperplane corresponding to the subspace
$$
H^{0}(E,\eta\otimes\mathcal{O}_{E}(-\phi(r)))
\subset{H^{0}(E,\eta)}.
$$
\begin{lemma}\label{175851_15Sep18}
 $H_{r}$ is the unique hyperplane with the property $\nu_{\delta}(Z_{\delta,r})\subset{H_{r}}$.
\end{lemma}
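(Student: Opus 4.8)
\emph{Plan.} The inclusion $\nu_{\delta}(Z_{\delta,r})\subseteq H_{r}$ is immediate: a point of $Z_{\delta,r}$ is a pair $(p_{1}+\dots+p_{n-2},p)$ with $\phi(r)\leq p_{1}+\dots+p_{n-2}$, so its image $\nu_{\delta}(p_{1}+\dots+p_{n-2},p)=p_{1}+\dots+p_{n-2}+p+t_{\eta-N(\delta)}(p)\in|\eta|$ contains $\phi(r)$, hence lies in $H_{r}$. The content is the \emph{uniqueness}. Since the hyperplanes of $\mathbf{P}(H^{0}(E,\eta)^{\vee})$ containing $\nu_{\delta}(Z_{\delta,r})$ form a linear subvariety of the dual space with $H_{r}$ as one member, uniqueness is equivalent to the statement that $\nu_{\delta}(Z_{\delta,r})$ is linearly nondegenerate in $H_{r}\cong\mathbf{P}^{n-2}$.

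Write $c=\eta-N(\delta)\in\Pic^{0}{(E)}$ and $\mathfrak{e}=\eta\otimes\mathcal{O}_{E}(-\phi(r))$, of degree $n-1$, and identify $H_{r}$ with $|\mathfrak{e}|$ via $D'\mapsto D'+\phi(r)$. Unwinding the formula for $\nu_{\delta}$ together with the $\mathbf{P}^{n-4}$-bundle description of $Z_{\delta,r}$ from Lemma~\ref{175903_15Sep18} gives
$$
\nu_{\delta}(Z_{\delta,r})=\bigcup_{p\in E}\left(\left|\mathfrak{e}\otimes\mathcal{O}_{E}(-p-t_{\eta-N(\delta)}(p))\right|+p+t_{\eta-N(\delta)}(p)\right)\subset|\mathfrak{e}|=\mathbf{P}^{n-2},
$$
a union of linear subspaces of dimension $n-4$. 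For $n\geq4$ the complete system $|\mathfrak{e}|$ embeds $E$ as a linearly normal elliptic curve $\epsilon\colon E\hookrightarrow|\mathfrak{e}|^{\vee}=\mathbf{P}^{n-2}$, and the annihilator of $H^{0}(E,\mathfrak{e}\otimes\mathcal{O}_{E}(-p-t_{\eta-N(\delta)}(p)))$ inside $H^{0}(E,\mathfrak{e})^{\vee}$ is spanned by the evaluation functionals at $p$ and at $t_{\eta-N(\delta)}(p)$. Dually, therefore, $\nu_{\delta}(Z_{\delta,r})$ spans $|\mathfrak{e}|$ if and only if $\bigcap_{p\in E}\ell_{p}=\emptyset$, where $\ell_{p}\subset\mathbf{P}^{n-2}$ is the line through $\epsilon(p)$ and $\epsilon(t_{\eta-N(\delta)}(p))$ — a secant line of $\epsilon(E)$ when $c\neq0$, and the tangent line at $\epsilon(p)$ when $c=0$.

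So it suffices to prove $\bigcap_{p}\ell_{p}=\emptyset$, and when $n\geq5$ this is easy: since $\deg\mathfrak{e}=n-1\geq4$, for a generic choice of $p_{1},p_{2}\in E$ the four points $\epsilon(p_{1}),\epsilon(t_{\eta-N(\delta)}(p_{1})),\epsilon(p_{2}),\epsilon(t_{\eta-N(\delta)}(p_{2}))$ impose four independent conditions on $|\mathfrak{e}|$ (when $c=0$, read this as the two second-order jets at $\epsilon(p_{1})$ and $\epsilon(p_{2})$) — equivalently $\ell_{p_{1}}$ and $\ell_{p_{2}}$ span a $\mathbf{P}^{3}$, hence are disjoint — so that $\bigcap_{p}\ell_{p}\subseteq\ell_{p_{1}}\cap\ell_{p_{2}}=\emptyset$. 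The case $n=3$ is also handled directly: there $\nu_{\delta}(Z_{\delta,r})$ is the set of at most four divisors $p+t_{\eta-N(\delta)}(p)$ with $2p\sim N(\delta)\otimes\mathcal{O}_{E}(-\phi(r))$ inside $|\mathfrak{e}|\cong\mathbf{P}^{1}$, and a short computation with the group law on $E$ shows at least two of them are distinct (they collapse only pairwise, and only when $c$ is a nonzero point of order $2$), so they span $\mathbf{P}^{1}=H_{r}$.

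The main obstacle is $n=4$: there $\epsilon(E)$ is a smooth plane cubic in $\mathbf{P}^{2}$, any two lines meet, and the argument of the previous paragraph is unavailable. Here the plan is to argue by contradiction. If $x\in\bigcap_{p}\ell_{p}$ and $x\notin\epsilon(E)$, projection from $x$ gives a morphism $\bar{\epsilon}\colon E\rightarrow\mathbf{P}^{1}$ of degree $3$ which is either invariant under translation by $c$ (when $c\neq0$) or has identically vanishing differential (when $c=0$); in either case $\bar{\epsilon}$ factors through a nontrivial isogeny of $E$ — respectively through the Frobenius morphism of $E$ — of degree dividing $3$, hence of degree exactly $3$, so that $\bar{\epsilon}$ would make a genus-one curve (a quotient of $E$, or $E^{(p)}$) map birationally onto $\mathbf{P}^{1}$, which is absurd. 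If instead $x=\epsilon(p_{0})\in\epsilon(E)$, then collinearity of $\epsilon(p_{0}),\epsilon(p),\epsilon(t_{\eta-N(\delta)}(p))$ for every $p$ (or, when $c=0$, the fact that $\epsilon(p_{0})$ lies on every tangent line of the cubic) forces $p_{0}+p+t_{\eta-N(\delta)}(p)\sim\mathfrak{e}$ for all $p$, i.e. $2p$ would lie in one fixed linear-equivalence class for every $p\in E$ — impossible, since multiplication by $2$ on $E$ is finite. Besides this case, the steps that need care are the explicit identification of $\nu_{\delta}(Z_{\delta,r})$ with the union of linear subspaces above and the passage from nondegeneracy to the statement $\bigcap_{p}\ell_{p}=\emptyset$.
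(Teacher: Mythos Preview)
Your proof is correct but follows a genuinely different route from the paper's.

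The paper argues directly and uniformly in $n$. For $n\geq4$ it observes that $\nu_{\delta}(Z_{\delta,r})$ is a hypersurface in $H_{r}$, then picks two points $\nu_{\delta}(z),\nu_{\delta}(z')$ of it (obtained by replacing the $E$-coordinate $p$ by another solution $p'$ of $2p'\sim2p$), identifies the line $l$ through them with the pencil $|\eta(-p_{1}-\dots-p_{n-3}-\phi(r))|$, and exhibits a point of $l$ \emph{not} in $\nu_{\delta}(Z_{\delta,r})$. Thus $\nu_{\delta}(Z_{\delta,r})$ is a non-linear hypersurface in $H_{r}$, hence spans $H_{r}$, and uniqueness follows. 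For $n=3$ the same choice of $z,z'$ already gives two distinct points in the line $H_{r}$.

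Your approach dualizes the problem. You rewrite $\nu_{\delta}(Z_{\delta,r})\subset|\mathfrak{e}|$ as the union over $p\in E$ of the $(n-4)$-planes $|\mathfrak{e}(-p-t_{c}(p))|+p+t_{c}(p)$, and observe that their annihilators in the dual $\mathbf{P}^{n-2}$ are precisely the secant (or, when $c=0$, tangent) lines $\ell_{p}$ of the elliptic normal curve $\epsilon(E)$; nondegeneracy becomes $\bigcap_{p}\ell_{p}=\emptyset$. This is an attractive reinterpretation, and for $n\geq5$ (two generic secants of an elliptic normal curve of degree $\geq4$ are skew) and $n=3$ (direct count) it pays off immediately. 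The price is that $n=4$ becomes a genuine special case --- any two lines in $\mathbf{P}^{2}$ meet --- which you handle by a separate contradiction argument about a point lying on every secant or every tangent of a plane cubic. That argument is fine (the infinite-order case of $c$ and the characteristic-zero case of the ``vanishing differential'' step are even easier than the cases you sketch), but the paper's line-not-contained-in-the-image trick sidesteps the split entirely.

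In short: the paper's proof is shorter and case-free for $n\geq4$; yours gives more geometric structure (the secant-line picture) at the cost of an extra argument when $n=4$.
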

\begin{proof}
 The inclusion $\nu_{\delta}(Z_{\delta,r})\subset{H_{r}}$ is obvious.
 We prove the uniqueness of the hyperplane $H_{r}$.
 Let $z=(p_{1}+\dots+p_{n-3}+\phi(r),p)\in{Z_{\delta,r}(k)}$ be satisfying $p_{i}\neq{t_{\eta-N(\delta)}(p_{j})}$ for $i\neq{j}$.
 We take a point $p'\in{E(k)}\setminus\{p\}$ such that $\mathcal{O}_{E}(2p')\cong\mathcal{O}_{E}(2p)$ and $\mathcal{O}_{E}(p'-p)\ncong{\eta\otimes{N(\delta)^{\vee}}}$.
 Then $z'=(p_{1}+\dots+p_{n-3}+\phi(r),p')$ is contained in ${Z_{\delta,r}(k)}$, and $\nu_{\delta}(z)\neq\nu_{\delta}(z')$.
 It implies the uniqueness in the case when $n=3$.
 When $n\geq4$, we show that $\nu_{\delta}(Z_{\delta,r})\subset{H_{r}}$ is a non-linear hypersurface in $H_{r}$.
 Let $l\subset{H_{r}}$ be the line containing the two points $\nu_{\delta}(z),\nu_{\delta}(z')\in{\nu_{\delta}(Z_{\delta,r})}$.
 Then the line $l\subset\mathbf{P}(H^{0}(E,\eta)^{\vee})$ corresponds to the linear pencil
 $$
 |\eta(-p_{1}-\dots-p_{n-3}-\phi(r))|\subset|\eta|
 \cong\mathbf{P}(H^{0}(E,\eta)^{\vee}).
 $$
 For a point $p_{0}\in{E(k)}$, there is a unique point $p'_{0}\in{E(k)}$ such that $p_{0}+p'_{0}\in|\eta(-p_{1}-\dots-p_{n-3}-\phi(r))|$.
 If $\mathcal{O}_{E}(2p_{0})\ncong\mathcal{O}_{E}(2p)$, $\mathcal{O}_{E}(2p'_{0})\ncong\mathcal{O}_{E}(2p)$ and
 $$
 p_{0},p'_{0}\notin\{
 t_{\eta-N(\delta)}(p_{1}),\dots,t_{\eta-N(\delta)}(p_{n-3}),
 t_{N(\delta)-\eta}(p_{1}),\dots,t_{N(\delta)-\eta}(p_{n-3})\},
 $$
 then the point $p_{1}+\dots+p_{n-3}+\phi(r)+p_{0}+p'_{0}\in|\eta|$ on the line $l$ is not contained in $\nu_{\delta}(Z_{\delta,r})$.
\end{proof}
\begin{lemma}\label{183745_14Dec18}
 The pull-back of the divisor $H_{r}$ by $\nu_{\delta}:X_{\delta}\rightarrow\mathbf{P}(H^{0}(E,\eta)^{\vee})$ is
 $$
 \nu_{\delta}^{*}{H_{r}}=Z_{\delta,r}+{M_{\delta,r}}+{M'_{\delta,r}},
 $$
 where $M_{\delta,y}$ and $M'_{\delta,y}$ are irreducible divisors on $X_{\delta}$ defined by
 \begin{align*}
  &M_{\delta,r}(k)=\{(p_{1}+\dots+p_{n-2},p)\in{X_{\delta}(k)}\mid
   p=\phi(r)\},\\
  &M'_{\delta,r}(k)=\{(p_{1}+\dots+p_{n-2},p)
   \in{X_{\delta}(k)}\mid
   p=t_{N(\delta)-\eta}(\phi(r))\}.
 \end{align*}
\end{lemma}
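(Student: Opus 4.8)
The plan is to factor $\nu_\delta$ through symmetric products of $E$ and realize $\nu_\delta^{*}H_r$ as the pull-back of an incidence divisor on $E^{(n)}$, so that the three components together with their multiplicities can be read off from the addition maps between symmetric products.

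First I would set up the factorization. Let $\theta_\delta\colon X_\delta\to E^{(n-2)}\times E\times E$ be the morphism $(p_1+\dots+p_{n-2},p)\mapsto(p_1+\dots+p_{n-2},\,p,\,t_{\eta-N(\delta)}(p))$, let $a\colon E^{(n-2)}\times E\times E\to E^{(n)}$ be the addition of effective divisors, and let $j\colon|\eta|\hookrightarrow E^{(n)}$ be the inclusion of the linear system; the defining formula for $\nu_\delta$ gives $a\circ\theta_\delta=j\circ\nu_\delta$. For $q\in E(k)$ let $I_q\subset E^{(n)}$ denote the incidence divisor $\{D\mid q\le D\}$, so that $I_{\phi(r)}\cap|\eta|=H_r$ as sets. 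Since $\deg\eta=n\ge3$, the sheaf $\eta$ is base-point-free on $E$, so $|\eta|$ is not contained in $I_{\phi(r)}$ and meets it transversally at a general point of $H_r$; hence $j^{*}I_{\phi(r)}=H_r$ as Cartier divisors, and consequently $\nu_\delta^{*}H_r=\theta_\delta^{*}a^{*}I_{\phi(r)}$.

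Next I would compute this pull-back. Set-theoretically $a^{-1}(I_{\phi(r)})$ is the union of the three divisors $\Xi\times E\times E$, $E^{(n-2)}\times\{\phi(r)\}\times E$ and $E^{(n-2)}\times E\times\{\phi(r)\}$, where $\Xi=\{D_1\in E^{(n-2)}\mid\phi(r)\le D_1\}$ is reduced and irreducible ($\cong E^{(n-3)}$). Because the addition map $E^{(m)}\times E^{(l)}\to E^{(m+l)}$ is \'{e}tale over the locus of reduced divisors, $a$ is \'{e}tale at a general point of each of these three divisors, so $a^{*}I_{\phi(r)}$ is their sum, each with multiplicity one. Since none of the three divisors contains $\theta_\delta(X_\delta)$, pull-back by $\theta_\delta$ is additive, and, writing $\mathrm{pr}_1,\mathrm{pr}_2$ for the two projections of $X_\delta$, the definition of $\theta_\delta$ yields
\begin{align*}
 \theta_\delta^{*}(\Xi\times E\times E)&=\mathrm{pr}_1^{*}\Xi,\\
 \theta_\delta^{*}(E^{(n-2)}\times\{\phi(r)\}\times E)&=\mathrm{pr}_2^{*}\{\phi(r)\},\\
 \theta_\delta^{*}(E^{(n-2)}\times E\times\{\phi(r)\})&=\mathrm{pr}_2^{*}\{t_{N(\delta)-\eta}(\phi(r))\}.
\end{align*}
By the Cartesian diagram in the proof of Lemma~\ref{173307_8Sep18}, $\mathrm{pr}_1\colon X_\delta\to E^{(n-2)}$ is finite \'{e}tale of degree $4$ and $\mathrm{pr}_2\colon X_\delta\to E$ is a $\mathbf{P}^{n-3}$-bundle; hence each of these pull-backs is a reduced divisor, equal respectively to $Z_{\delta,r}$ (irreducible, or a $J(X_\delta)_2$-orbit when $n=3$, by Lemma~\ref{175903_15Sep18}), $M_{\delta,r}$, and $M'_{\delta,r}$, the last two being smooth fibres of $\mathrm{pr}_2$ and hence irreducible. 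Summing gives $\nu_\delta^{*}H_r=Z_{\delta,r}+M_{\delta,r}+M'_{\delta,r}$.

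I expect the main obstacle to be the multiplicity bookkeeping: one must be sure that no spurious multiplicities or vertical components appear, i.e. that $a^{*}I_{\phi(r)}$ is reduced along each of its three components and that $\theta_\delta^{*}$ carries a reduced divisor to a reduced divisor. Both points rest on the structure of $X_\delta$ from Lemma~\ref{173307_8Sep18} ($\mathrm{pr}_1$ \'{e}tale, $\mathrm{pr}_2$ a projective bundle) together with \'{e}taleness of the symmetric-product addition maps over reduced divisors, the latter being where the hypothesis $\mathrm{char}\,k\ne2$ is used (through the \'{e}taleness of $\mathrm{pr}_1$). One should also note the special case $N(\delta)=\eta$: then $t_{\eta-N(\delta)}=\mathrm{id}$, $M_{\delta,r}=M'_{\delta,r}$, and the formula reads $\nu_\delta^{*}H_r=Z_{\delta,r}+2M_{\delta,r}$; the argument above is unchanged, since $a$ remains \'{e}tale at a general point of each component of $a^{-1}(I_{\phi(r)})$.
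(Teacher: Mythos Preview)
Your argument is correct and follows essentially the same route as the paper: both proofs pull back the incidence divisor $I_{\phi(r)}\subset E^{(n)}$ along the map $(p_1+\dots+p_{n-2},p)\mapsto p_1+\dots+p_{n-2}+p+t_{\eta-N(\delta)}(p)$, identify its three set-theoretic components, and then restrict to $X_\delta$. The only cosmetic difference is that the paper carries out the pull-back on the ambient $E^{(n-2)}\times E$ and then restricts, whereas you factor through $E^{(n-2)}\times E\times E$ and work directly on $X_\delta$; your version has the advantage of making the multiplicity-one claims explicit via the \'etaleness of $a$ over reduced divisors and of $\mathrm{pr}_1$ (this is where $\mathrm{char}\,k\neq 2$ enters), points the paper leaves to the reader.
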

\begin{proof}
 Let $I_{r}$ be an irreducible divisor on $E^{(n)}$ defined by
 $$
 I_{r}(k)=\{p_{1}+\dots+p_{n}\in{E^{(n)}(k)}\mid
 p_{1}+\dots+p_{n}\geq\phi(r)\},
 $$
 and let $Z_{r}$, $M_{r}$, $M'_{r}$ be irreducible divisors on $E^{(n-2)}\times{E}$ defined by
 \begin{align*}
  &Z_{r}(k)=\{(p_{1}+\dots+p_{n-2},p)\in{E^{(n-2)}(k)\times{E(k)}}\mid
   p_{1}+\dots+p_{n-2}\geq\phi(r)\},\\
  &M_{r}(k)=\{(p_{1}+\dots+p_{n-2},p)\in{E^{(n-2)}(k)\times{E(k)}}\mid
   p=\phi(r)\},\\
  &M'_{r}(k)=\{(p_{1}+\dots+p_{n-2},p)
   \in{E^{(n-2)}(k)\times{E(k)}}\mid
   p=t_{N(\delta)-\eta}(\phi(r))\}.
 \end{align*}
 Then the pull-back of the divisor $I_{r}$ by the morphism
 \begin{align*}
  E^{(n-2)}(k)\times{E(k)}&\longrightarrow{E^{(n)}(k)};\\
  (p_{1}+\dots+p_{n-2},p)&\longmapsto
  p_{1}+\dots+p_{n-2}+p+t_{\eta-N(\delta)}(p)
 \end{align*}
 is the divisor $Z_{r}+M_{r}+M'_{r}$ on $E^{(n-2)}\times{E}$.
 Since the restriction of $I_{r}$ to $|\eta|\subset{E^{(n)}}$ is the divisor $H_{r}$ on $\mathbf{P}(H^{0}(E,\eta)^{\vee})\cong|\eta|$, the pull-back $\nu_{\delta}^{*}H_{r}$ is the restriction of $Z_{r}+M_{r}+M'_{r}$ to $X_{\delta}$.
\end{proof}
\begin{corollary}\label{143241_6Oct18}
 $\nu_{\delta}^{*}{H_{r}}-Z_{\delta,r}$ is an irreducible divisor on $X_{\delta}$ if and only if $N(\delta)=\eta$.
\end{corollary}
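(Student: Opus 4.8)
The plan is to extract everything from Lemma~\ref{183745_14Dec18}. That lemma gives the factorization $\nu_{\delta}^{*}{H_{r}}=Z_{\delta,r}+M_{\delta,r}+M'_{\delta,r}$ with $M_{\delta,r}$ and $M'_{\delta,r}$ irreducible divisors on $X_{\delta}$, so $\nu_{\delta}^{*}{H_{r}}-Z_{\delta,r}=M_{\delta,r}+M'_{\delta,r}$. Hence the first step is simply to observe that this effective divisor is irreducible --- in the sense used in the definition of $\Pi'_{\mathcal{L}}$ in Section~\ref{145716_29Nov18}, namely that its support is a single irreducible divisor --- if and only if $M_{\delta,r}=M'_{\delta,r}$.

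The second step is to pin down when $M_{\delta,r}=M'_{\delta,r}$, which I would do set-theoretically inside $E^{(n-2)}\times{E}$. By the defining equations in Lemma~\ref{183745_14Dec18}, $M_{\delta,r}$ is contained in the locus $\{p=\phi(r)\}$ while $M'_{\delta,r}$ is contained in the locus $\{p=t_{N(\delta)-\eta}(\phi(r))\}$, and both are non-empty (again by Lemma~\ref{183745_14Dec18}). So if $\phi(r)\neq t_{N(\delta)-\eta}(\phi(r))$ in $E$, these two loci are disjoint and $M_{\delta,r}\neq M'_{\delta,r}$; consequently $M_{\delta,r}=M'_{\delta,r}$ forces $\phi(r)=t_{N(\delta)-\eta}(\phi(r))$, which by the definition of $t_{N(\delta)-\eta}$ reads $[\mathcal{O}_{E}(\phi(r))]=[\mathcal{O}_{E}(\phi(r))]+N(\delta)-\eta$ in $\Pic{(E)}$, i.e.\ $N(\delta)=\eta$. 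Conversely, if $N(\delta)=\eta$ then $t_{N(\delta)-\eta}=\mathrm{id}_{E}$, so $M_{\delta,r}$ and $M'_{\delta,r}$ are cut out by the same condition $p=\phi(r)$ and coincide; then $\nu_{\delta}^{*}{H_{r}}-Z_{\delta,r}=2M_{\delta,r}$ has irreducible support. Chaining these equivalences yields the corollary.

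I do not expect a real obstacle: the geometric content is entirely carried by Lemma~\ref{183745_14Dec18}, which was established uniformly in $n$, and the rest is bookkeeping. The one point that deserves a word is the reading of ``irreducible divisor'' as ``divisor with irreducible support'': when $N(\delta)=\eta$ the divisor $\nu_{\delta}^{*}{H_{r}}-Z_{\delta,r}$ equals $2M_{\delta,r}$, which is non-reduced, and this is exactly what the phrase ``these components coincide'' in the outline of the proof of Theorem~\ref{155025_28Oct18} refers to. As a sanity check I would also note the case $n=3$, where $X_{\delta}\cong{E}$ and $M_{\delta,r},M'_{\delta,r}$ degenerate to single points of $E$; the assertion then just says that these two points of $E$ collide precisely when $N(\delta)=\eta$, and no extra argument is needed.
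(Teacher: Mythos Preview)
Your proposal is correct and matches the paper's approach: the paper states the corollary without proof, as an immediate consequence of Lemma~\ref{183745_14Dec18}, and your argument is exactly the intended elaboration. Your remark on reading ``irreducible divisor'' as ``divisor with irreducible support'' is the right interpretation, consistent with the phrase ``these components coincide'' in the proof outline of Theorem~\ref{155025_28Oct18}.
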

We consider the dual variety $(\Phi_{|\eta|}(E))^{\vee}\subset\mathbf{P}(H^{0}(E,\eta)^{\vee})$ of the image of the closed immersion $\Phi_{|\eta|}:E\rightarrow\mathbf{P}(H^{0}(E,\eta))$.
\begin{lemma}\label{152431_2Nov18}
 The projective curve $\Phi_{|\eta|}(E)\subset\mathbf{P}(H^{0}(E,\eta))$ is reflexive.
 In particular, $\Phi_{|\eta|}(E)=((\Phi_{|\eta|}(E))^{\vee})^{\vee}\subset\mathbf{P}(H^{0}(E,\eta))$.
\end{lemma}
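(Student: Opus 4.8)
The plan is to deduce reflexivity from the Monge--Segre--Wallace criterion, reducing it to a first--order osculation statement on $E$ that is then settled by Riemann--Roch. Write $C=\Phi_{|\eta|}(E)\subset\mathbf{P}^{n-1}=\mathbf{P}(H^{0}(E,\eta)^{\vee})$; since $\deg\eta=n\geq3$, the morphism $\Phi_{|\eta|}$ is a closed immersion, so $C$ is a smooth projective curve. Let $\mathrm{Con}(C)\subset{C}\times(\mathbf{P}^{n-1})^{\vee}$ be the conormal variety, the closure of the set of pairs $(p,H)$ with $T_{p}C\subset{H}$; it is a $\mathbf{P}^{n-3}$--bundle over $C$, hence irreducible of dimension $n-2$, and $(\Phi_{|\eta|}(E))^{\vee}$ is its image under the second projection $\mathrm{pr}_{2}$. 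By the Monge--Segre--Wallace theorem, $C$ is reflexive precisely when $\mathrm{pr}_{2}$ is separable, and reflexivity then yields the identification of $\mathrm{Con}(C)$ with $\mathrm{Con}(C^{\vee})$, in particular $C=(C^{\vee})^{\vee}$; so it suffices to prove that $\mathrm{pr}_{2}$ is separable.

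Next I would reduce separability of $\mathrm{pr}_{2}$ to a statement about osculating planes. Fix a general $p\in{C}$ and a general hyperplane $H\supset{T_{p}C}$. A tangent vector of $\mathrm{Con}(C)$ at $(p,H)$ lying in $\Ker{(d\,\mathrm{pr}_{2})}$ must have nonzero image in $T_{p}C$, since a vertical vector is detected by $d\,\mathrm{pr}_{2}$ (the fibre of $\mathrm{Con}(C)\to{C}$ is a linear subspace of $(\mathbf{P}^{n-1})^{\vee}$); so it moves $p$ along $C$ while keeping $H$ tangent to $C$ to first order, and the latter forces $H$ to contain the osculating plane of $C$ at $p$. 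For a general $H\supset{T_{p}C}$ this is impossible unless the osculating plane at $p$ coincides with $T_{p}C$. Hence $\mathrm{pr}_{2}$ is separable as soon as, for general $p$, the osculating plane of $C$ at $p$ is genuinely two--dimensional; in terms of a local parametrisation $\gamma(t)$ of $C$ at $p$ this is the non--vanishing of the leading coefficient of $\gamma''$ modulo $\langle\gamma,\gamma'\rangle$, which equals a nonzero scalar multiple of $2$, hence is nonzero exactly because $\mathrm{char}\,k\neq2$, provided the vanishing sequence of $|\eta|$ at $p$ begins $0,1,2,\dots$.

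It remains to compute the osculating orders of $|\eta|$ on $E$. For every $p\in{E}$ and every integer $k$ with $0\leq{k}\leq{n-1}$, we have $\deg(\eta\otimes\mathcal{O}_{E}(-kp))=n-k\geq1$ and $h^{1}(E,\eta\otimes\mathcal{O}_{E}(-kp))=h^{0}(E,\eta^{\vee}\otimes\mathcal{O}_{E}(kp))=0$ because $k-n<0$, so Riemann--Roch gives $h^{0}(E,\eta\otimes\mathcal{O}_{E}(-kp))=n-k$. Thus the vanishing sequence of $|\eta|$ at every point begins $0,1,2,\dots$; moreover $h^{0}(E,\eta\otimes\mathcal{O}_{E}(-3p))=n-3<n-2$ when $n\geq4$, while for $n=3$ the same strict inequality holds whenever $3p\not\sim\eta$, so for general $p$ a general hyperplane through $T_{p}C$ meets $C$ at $p$ with multiplicity exactly $2$. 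Combined with the previous paragraph this shows $\mathrm{pr}_{2}$ is separable, hence $C$ is reflexive and $C=(C^{\vee})^{\vee}$.

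I expect the only real obstacle to be the characteristic--$p$ bookkeeping in the middle step: the implication ``the vanishing sequence begins $0,1,2$ at a general point $\Rightarrow$ reflexive'' fails in characteristic $2$, where smooth strange conics occur, so the argument must pinpoint precisely where the hypothesis $\mathrm{char}\,k\neq2$ enters, namely in the non--degeneracy of the second fundamental form of $C$. The Riemann--Roch computation and the passage from separability of $\mathrm{pr}_{2}$ to the biduality statement are routine.
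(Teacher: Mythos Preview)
Your proof is correct and takes essentially the same approach as the paper: both compute $h^{0}(E,\eta(-kp))=n-k$ by Riemann--Roch to exhibit, at a general point, a tangent hyperplane with contact of multiplicity exactly~$2$, and then conclude reflexivity from $\mathrm{char}\,k\neq2$. The only difference is that the paper cites the criterion of Hefez--Kleiman~(3.5) as a black box for this last implication, whereas you unpack it via the Monge--Segre--Wallace characterisation and an explicit computation of $\Ker(d\,\mathrm{pr}_{2})$ on the conormal variety.
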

\begin{proof}
 If $1\leq{i}<n$, then $h^{0}(E,\eta\otimes\mathcal{O}_{E}(-ip))=n-i$ for any $p\in{E(k)}$.
 If $n=3$, then $h^{0}(E,\eta\otimes\mathcal{O}_{E}(-3p))=0$ for general $p\in{E(k)}$.
 Hence $h^{0}(E,\eta\otimes\mathcal{O}_{E}(-2p))>h^{0}(E,\eta\otimes\mathcal{O}_{E}(-3p))$ for general $p\in{E(k)}$.
 Then there is a hyperplane $H\subset\mathbf{P}(H^{0}(E,\eta))$ which intersects $\Phi_{|\eta|}(E)$ at $\Phi_{|\eta|}(p)$ with the multiplicity $2$.
 By \cite[(3.5)]{HK}, $\Phi_{|\eta|}(E)\subset\mathbf{P}(H^{0}(E,\eta))$ is reflexive, because the characteristic of the base field $k$ is not equal to $2$.
\end{proof}
\begin{lemma}\label{143345_6Oct18}
 If $N(\delta)=\eta$, then the dual variety of $X'_{\delta}\subset\mathbf{P}(H^{0}(E,\eta)^{\vee})$ is
 $\Phi_{|\eta|}(E)\subset\mathbf{P}(H^{0}(E,\eta))$.
\end{lemma}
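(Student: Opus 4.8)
The plan is to exploit the hypothesis $N(\delta)=\eta$: under it the normalization morphism $\nu_{\delta}$ degenerates into the ``doubling'' map on $|\eta|$, so that $X'_{\delta}$ is exactly the hypersurface of non-reduced members of the complete linear system $|\eta|$, i.e.\ the classical discriminant, which is the dual variety of the elliptic normal curve $\Phi_{|\eta|}(E)$. The lemma then follows by dualizing once more and invoking the reflexivity of $\Phi_{|\eta|}(E)$ proved in Lemma~\ref{152431_2Nov18}.

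First I would make $\nu_{\delta}$ explicit. By Lemma~\ref{123549_17Dec18}, $X'_{\delta}=\nu_{\delta}(X_{\delta})$, and since $X_{\delta}$ is projective by Lemma~\ref{173307_8Sep18}, this is a closed subvariety of $|\eta|\cong\mathbf{P}(H^{0}(E,\eta)^{\vee})$. When $N(\delta)=\eta$, the point $t_{\eta-N(\delta)}(p)$ is characterized by $[\mathcal{O}_{E}(t_{\eta-N(\delta)}(p))]=[\mathcal{O}_{E}(p)]$, so $t_{\eta-N(\delta)}=\mathrm{id}_{E}$ and
\[
\nu_{\delta}(p_{1}+\dots+p_{n-2},p)=p_{1}+\dots+p_{n-2}+2p .
\]
Since $X_{\delta}(k)=\{(p_{1}+\dots+p_{n-2},p)\mid\mathcal{O}_{E}(p_{1}+\dots+p_{n-2}+2p)\cong\eta\}$, the image consists exactly of the divisors $D\in|\eta|$ with $D\geq{2p}$ for some $p\in{E(k)}$: such a $D$ is hit by taking $p$ to be a point of multiplicity $\geq{2}$ in $D$ and $p_{1}+\dots+p_{n-2}=D-2p$, and conversely every divisor in the image is non-reduced by construction. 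Hence
\[
X'_{\delta}=\{D\in|\eta|\mid D\geq{2p}\ \text{for some}\ p\in{E(k)}\},
\]
the locus of non-reduced members of $|\eta|$.

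Next I would identify this locus with $(\Phi_{|\eta|}(E))^{\vee}$. Because $\Phi_{|\eta|}:E\rightarrow\mathbf{P}(H^{0}(E,\eta))$ is a closed immersion by a complete linear system, its hyperplane sections are precisely the members of $|\eta|$, and a hyperplane $H$ contains the embedded tangent line of $\Phi_{|\eta|}(E)$ at a point $\Phi_{|\eta|}(p)$ if and only if the corresponding divisor of $|\eta|$ has multiplicity $\geq{2}$ at $p$. As $E$ is smooth, every point of $\Phi_{|\eta|}(E)$ is a smooth point, so the set of tangent hyperplanes needs no closure: it is the image of the conormal variety of $\Phi_{|\eta|}(E)$, which is a $\mathbf{P}^{n-3}$-bundle over $E$ and hence proper and irreducible. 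Thus $(\Phi_{|\eta|}(E))^{\vee}$ equals the locus of non-reduced members of $|\eta|$, and comparing with the previous paragraph gives $X'_{\delta}=(\Phi_{|\eta|}(E))^{\vee}$ as subvarieties of $\mathbf{P}(H^{0}(E,\eta)^{\vee})$. Finally, by Lemma~\ref{152431_2Nov18} the curve $\Phi_{|\eta|}(E)$ is reflexive, so the dual variety of $X'_{\delta}$ is $\bigl((\Phi_{|\eta|}(E))^{\vee}\bigr)^{\vee}=\Phi_{|\eta|}(E)$, which is the assertion.

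The main obstacle is less any single computation than the careful bookkeeping of the projective spaces $\mathbf{P}(H^{0}(E,\eta))$, $\mathbf{P}(H^{0}(E,\eta)^{\vee})\cong|\eta|$ and their mutual duality, together with the verification that the non-reduced members of $|\eta|$ sweep out \emph{all} of $(\Phi_{|\eta|}(E))^{\vee}$ rather than a proper subvariety. This last point is the classical description of the dual variety of a smooth curve embedded by a complete linear system as the discriminant hypersurface of its singular hyperplane sections, and it requires nothing beyond the reflexivity of $\Phi_{|\eta|}(E)$ already established; the remaining steps are formal.
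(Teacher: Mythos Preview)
Your argument is correct and follows essentially the same strategy as the paper: show $X'_{\delta}=(\Phi_{|\eta|}(E))^{\vee}$ by identifying $X'_{\delta}$ with the locus of non-reduced members of $|\eta|$, and then invoke the reflexivity Lemma~\ref{152431_2Nov18}. The only difference is organizational: the paper recomputes the Gauss map $\Psi^{B}_{J(C),\delta}$ on the $C$-side via Lemma~\ref{162111_7Aug18}, obtains the inclusion $\Psi^{B}_{J(C),\delta}((B_{\delta}\cap{P})\setminus{D_{\delta,\mathrm{sing}}})\subset(\Phi_{|\eta|}(E))^{\vee}$, and then closes up by comparing irreducible hypersurfaces; you instead invoke Lemma~\ref{123549_17Dec18} to pass directly to $\nu_{\delta}(X_{\delta})$ and simplify the formula for $\nu_{\delta}$ on the $E$-side using $t_{\eta-N(\delta)}=\mathrm{id}_{E}$. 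Your route is slightly more economical since Lemma~\ref{123549_17Dec18} already packages the needed equality $X'_{\delta}=\nu_{\delta}(X_{\delta})$, sparing the dimension comparison, but the substantive content is the same.
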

\begin{proof}
 By Lemma~\ref{152431_2Nov18}, we show that the dual variety of $\Phi_{|\eta|}(E)$ is $X'_{\delta}$.
 For
 $L\in{(B_{\delta}\cap{P})\setminus{D_{\delta,\mathrm{sing}}}}
 \subset\Pic^{0}{(C)}$,
 there is a unique effective divisor $q_{1}+\dots+q_{n-2}+q+\sigma(q)\in{C^{(n)}(k)}$ such that
 $$
 L\otimes\delta\cong\mathcal{O}_{C}(q_{1}+\dots+q_{n-2}+q+\sigma(q)).
 $$
 Since $L\in{P(k)}$, we have
 $$
 \eta=N(\delta)=[\mathcal{O}_{E}(\phi(q_{1})+\dots+\phi(q_{n-2})+2\phi(q))],
 $$
 hence
 $$
 \Omega_{C}^{1}\otimes{L^{\vee}}\otimes\delta^{\vee}\cong
 \phi^{*}\eta\otimes{L^{\vee}}\otimes\delta^{\vee}\cong
 \mathcal{O}_{C}(\sigma(q_{1})+\dots+\sigma(q_{n-2})+\sigma(q)+q)
 $$
 and $\Psi^{B}_{J(C),\delta}(L)\in\mathbf{P}(H^{0}(E,\eta)^{\vee})$ is defined by the effective divisor
 $$
 \phi(q_{1})+\dots+\phi(q_{n-2})+2\phi(q)\in|\eta|
 \cong\mathbf{P}(H^{0}(E,\eta)^{\vee}).
 $$
 It means that the hyperplane in $\mathbf{P}(H^{0}(E,\eta))$ corresponding $\Psi^{B}_{J(C),\delta}(L)$ is tangent to the image $\Phi_{|\eta|}(E)$.
 Hence we have
 $$
 \Psi^{B}_{J(C),\delta}((B_{\delta}\cap{P})\setminus{D_{\delta,\mathrm{sing}}})
 \subset(\Phi_{|\eta|}(E))^{\vee}.
 $$
 Since $(\Phi_{|\eta|}(E))^{\vee}$ and $\Psi^{B}_{J(C),\delta}((B_{\delta}\cap{P})\setminus{D_{\delta,\mathrm{sing}}})$ are irreducible hypersurfaces in $\mathbf{P}(H^{0}(E,\eta)^{\vee})$, we have $X'_{\delta}=(\Phi_{|\eta|}(E))^{\vee}$.
\end{proof}
\section{Key Propositions}\label{150759_29Nov18}
Let $\mathcal{L}$ be an ample invertible sheaf on $P$ which represents the the polarization $\lambda_{P}$.
\begin{lemma}\label{152342_24Nov18}
 $U_{D}=\Bs|\mathcal{L}|\setminus{D_{\mathrm{sing}}}$ is nonsingular for any $D\in|\mathcal{L}|$.
\end{lemma}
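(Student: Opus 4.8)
The plan is to reduce the assertion to Lemma~\ref{143849_2Oct18}, which already establishes that $(B_{\delta}\cap{P})\setminus{D_{\delta,\mathrm{sing}}}$ is a nonsingular variety for every $\delta\in\Pic^{n}{(C)}$; the only thing to verify is that an arbitrary member of the pencil $|\mathcal{L}|$, together with the intrinsic base locus $\Bs|\mathcal{L}|$, fits into that framework.

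First I would use Lemma~\ref{135426_16Aug18} to choose $\delta\in\Pic^{n}{(C)}$ with $\mathcal{L}\cong\mathcal{L}_{\delta}$, so that $|\mathcal{L}|=|\mathcal{L}_{\delta}|$ and, by Lemma~\ref{135618_16Aug18},
$$
\Bs|\mathcal{L}|=\Bs|\mathcal{L}_{\delta}|=B_{\delta}\cap{P}.
$$
Next, given $D\in|\mathcal{L}|$, Lemma~\ref{135618_16Aug18} produces $s\in\Pic^{0}{(E)}$ with $D=D_{\delta+\phi^{*}s}$; put $\delta'=\delta+\phi^{*}s\in\Pic^{n}{(C)}$, so that $D=D_{\delta'}$ and hence $D_{\mathrm{sing}}=D_{\delta',\mathrm{sing}}$.

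The key bookkeeping point is the identity $B_{\delta'}=B_{\delta}$: since $\{\delta'+\phi^{*}t\mid{t}\in\Pic^{0}{(E)}\}=\{\delta+\phi^{*}t\mid{t}\in\Pic^{0}{(E)}\}$, the intersections $\bigcap_{t}W_{\delta'+\phi^{*}t}$ and $\bigcap_{t}W_{\delta+\phi^{*}t}$ coincide. Combining this with the two displays above gives
$$
U_{D}=\Bs|\mathcal{L}|\setminus{D_{\mathrm{sing}}}
=(B_{\delta}\cap{P})\setminus{D_{\delta',\mathrm{sing}}}
=(B_{\delta'}\cap{P})\setminus{D_{\delta',\mathrm{sing}}},
$$
and Lemma~\ref{143849_2Oct18} applied to $\delta'$ says precisely that this is a nonsingular variety. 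The substantive content — the injectivity on tangent spaces of $\beta_{\delta'}^{1}\vert_{Y_{\delta'}^{\circ}}$ — is already carried out in the proof of Lemma~\ref{143849_2Oct18}, so I do not expect a genuine obstacle here; the only thing to be careful about is exactly the reduction above, namely that $\Bs|\mathcal{L}|$ is independent of the choice of $\delta'$ presenting the member $D$, which is the content of the equality $B_{\delta'}=B_{\delta}$.
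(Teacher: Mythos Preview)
Your argument is correct and follows the same route as the paper's proof, which simply notes that $D=D_{\delta}$ for some $\delta\in\Pic^{n}{(C)}$ and then cites Lemma~\ref{135618_16Aug18} and Lemma~\ref{143849_2Oct18}. Your version is more explicit about the bookkeeping step $B_{\delta'}=B_{\delta}$, which the paper absorbs into its terse ``$D=D_{\delta}$ for some $\delta$'' (taking $\delta$ to be your $\delta'$ from the start).
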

\begin{proof}
 Since $D=D_{\delta}$ for some $\delta\in\Pic^{n}{(C)}$, it is a consequence of Lemma~\ref{135618_16Aug18} and Lemma~\ref{143849_2Oct18}.
\end{proof}
Let
$$
\Psi_{D}:D\setminus{D_{\mathrm{sing}}}\longrightarrow
\mathbf{P}^{n-1}=\Grass{(n-1,H^{0}(P,\Omega_{P}^{1})^{\vee})}
$$
be the Gauss map for $D\in|\mathcal{L}|$, and let $\nu_{D}:X_{D}\rightarrow{X'_{D}}$ be the normalization of $X'_{D}=\overline{\Psi_{D}(U_{D})}\subset\mathbf{P}^{n-1}$.
Then by Lemma~\ref{152342_24Nov18}, there is a unique morphism $\psi_{D}:U_{D}\rightarrow{X_{D}}$ such that $\Psi_{D}\vert_{U_{D}}=\nu_{D}\circ\psi_{D}$.
 Let $Z_{D}=\overline{\psi_{D}(\Ram{(\psi_{D})})}\subset{X_{D}}$ be the Zariski closure of the image of the ramification divisor of $\psi_{D}$.
\begin{proposition}\label{165452_24Nov18}
 Let $D\subset{P}$ be a member in $|\mathcal{L}|\setminus\Pi_{\mathcal{L}}$, where $\Pi_{\mathcal{L}}\subset|\mathcal{L}|$ is the subset in Lemma~\ref{142326_6Oct18}.
\begin{enumerate}\label{174153_24Nov18}
 \item If $n=3$, then $X_{D}$ is a nonsingular projective curve of genus $1$, and $Z_{D}$ is a disjoint union of $6$ orbits $Z_{D,1},\dots,Z_{D,6}$ by the $J(X_{D})_{2}$-action.
 \item If $n\geq4$, then $Z_{D}$ has $2n$ irreducible components $Z_{D,1},\dots,Z_{D,2n}$.
 \item For any subset $Z_{D,i}\subset{Z_{D}}$ in $(1)$ and $(2)$, there is a unique hyperplane $H_{D,i}\subset\mathbf{P}^{n-1}$ such that $\nu_{D}(Z_{D,i})\subset{H_{D,i}}$.
\end{enumerate}
\end{proposition}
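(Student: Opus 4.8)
The plan is to reduce the whole statement to the explicit model constructed in Section~\ref{154736_28Oct18}. First I would use Lemma~\ref{135426_16Aug18} together with Lemma~\ref{135618_16Aug18} to choose $\delta\in\Pic^{n}(C)$ with $D=D_{\delta}$ and $\mathcal{L}\cong\mathcal{L}_{\delta}$. Writing $\delta=\delta_{0}+\phi^{*}s$ with $N(\delta_{0})=\eta$, the computation in the proof of Lemma~\ref{142326_6Oct18} identifies $\Pi_{\mathcal{L}_{\delta_{0}}}$ with $\{D_{\delta_{0}+\phi^{*}s}\mid s\in J(E)_{4}\setminus J(E)_{2}\}$, so the hypothesis $D\notin\Pi_{\mathcal{L}}$ forces $s\notin J(E)_{4}\setminus J(E)_{2}$ and hence $N(\delta)-\eta=2s\notin J(E)_{2}\setminus\{0\}$. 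Thus $\delta$ satisfies the hypothesis of Lemma~\ref{121957_22Sep18}, so $\nu_{\delta}:X_{\delta}\rightarrow X'_{\delta}$ is the normalization of $X'_{\delta}$; moreover $U_{D}=\Bs|\mathcal{L}|\setminus D_{\mathrm{sing}}=(B_{\delta}\cap P)\setminus D_{\delta,\mathrm{sing}}$ by Lemma~\ref{135618_16Aug18} and Lemma~\ref{153323_22Sep18}, and $\beta_{\delta}^{1}$ gives an isomorphism $Y_{\delta}^{\circ}\overset{\sim}{\rightarrow}U_{D}$ by Lemma~\ref{143849_2Oct18}.

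Next I would transport the restricted Gauss map through these identifications. By Lemma~\ref{163242_24Nov18}, the map $\pi\circ\iota:\mathbf{P}(H^{0}(E,\eta)^{\vee})\overset{\sim}{\rightarrow}\mathbf{P}^{n-1}$ carries $\Psi_{J(C),\delta}^{B}\vert_{U_{D}}$ to $\Psi_{D}\vert_{U_{D}}$, and by Lemma~\ref{143743_30Aug18} the former equals $\nu_{\delta}\circ\psi_{\delta}^{\circ}\circ(\beta_{\delta}^{1})^{-1}$. Taking closures of images identifies $X'_{D}$ with $X'_{\delta}$ via $\pi\circ\iota$, and since $\nu_{\delta}$ is the normalization of this variety there is a unique isomorphism $X_{D}\cong X_{\delta}$ compatible with the two maps to $X'_{\delta}$; feeding this into the universal property that defines $\psi_{D}$ gives $\psi_{D}=\psi_{\delta}^{\circ}$ (after identifying $U_{D}$ with $Y_{\delta}^{\circ}$) and $\nu_{D}=(\pi\circ\iota)\circ\nu_{\delta}$. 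Hence $Z_{D}=\overline{\psi_{\delta}^{\circ}(\Ram(\psi_{\delta}^{\circ}))}$, which by Lemma~\ref{113332_24Nov18} equals $\bigcup_{r\in\Ram(\phi)}Z_{\delta,r}$.

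It then remains to read off the three assertions from the structure of the $Z_{\delta,r}$. As $\phi$ is a double covering, $\Ram(\phi)$ is reduced of degree $2g-2=2n$, say $\Ram(\phi)=\{r_{1},\dots,r_{2n}\}$, and since each branch point has a single preimage the points $\phi(r_{1}),\dots,\phi(r_{2n})\in E$ are pairwise distinct. If $n\geq4$, Lemma~\ref{175903_15Sep18} gives that each $Z_{\delta,r_{i}}$ is irreducible of dimension $n-3$, and the description $Z_{\delta,r}(k)=\{(p_{1}+\dots+p_{n-2},p)\in X_{\delta}\mid\phi(r)\leq p_{1}+\dots+p_{n-2}\}$ shows $Z_{\delta,r_{i}}\neq Z_{\delta,r_{j}}$ for $i\neq j$; hence $Z_{D}$ has exactly the $2n$ irreducible components $Z_{D,i}=Z_{\delta,r_{i}}$, proving (2). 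If $n=3$, Lemma~\ref{175903_15Sep18} gives $X_{D}\cong X_{\delta}\cong E$, a nonsingular projective curve of genus $1$, and identifies $Z_{\delta,r_{i}}$ with the $J(X_{\delta})_{2}$-orbit $\{p\in E\mid\mathcal{O}_{E}(\phi(r_{i})+2p)\cong N(\delta)\}$; distinctness of the $\phi(r_{i})$ then makes these $2n=6$ orbits pairwise disjoint, proving (1). Finally, for each $Z_{D,i}=Z_{\delta,r_{i}}$, Lemma~\ref{175851_15Sep18} gives the unique hyperplane $H_{r_{i}}\subset\mathbf{P}(H^{0}(E,\eta)^{\vee})$ with $\nu_{\delta}(Z_{\delta,r_{i}})\subset H_{r_{i}}$, and its image $H_{D,i}=(\pi\circ\iota)(H_{r_{i}})$ is the unique hyperplane of $\mathbf{P}^{n-1}$ containing $\nu_{D}(Z_{D,i})$, proving (3).

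The work is essentially bookkeeping rather than a new idea, and the only delicate points are in the first two paragraphs: one must check that $D\notin\Pi_{\mathcal{L}}$ really does force $N(\delta)-\eta\notin J(E)_{2}\setminus\{0\}$ (so that $X_{D}$ is $X_{\delta}$ and not a degree-$2$ cover of $X'_{\delta}$), and one must chase the chain of identifications $U_{D}\cong Y_{\delta}^{\circ}$, $\mathbf{P}^{n-1}\cong\mathbf{P}(H^{0}(E,\eta)^{\vee})$, $X_{D}\cong X_{\delta}$ carefully enough that the intrinsic pair $(\psi_{D},Z_{D})$ is matched with $(\psi_{\delta}^{\circ},\bigcup_{r}Z_{\delta,r})$. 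Once that is done, (1)--(3) follow at once from Lemmas~\ref{175903_15Sep18} and \ref{175851_15Sep18}.
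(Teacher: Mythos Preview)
Your proposal is correct and follows essentially the same route as the paper's own proof: reduce to the explicit model by choosing $\delta$ with $D=D_{\delta}$, use the condition $D\notin\Pi_{\mathcal{L}}$ together with the description of $\Pi_{\mathcal{L}}$ in Lemma~\ref{142326_6Oct18} to ensure $N(\delta)-\eta\notin J(E)_{2}\setminus\{0\}$, then transport $(\psi_{D},\nu_{D},Z_{D})$ to $(\psi_{\delta}^{\circ},\nu_{\delta},\bigcup_{r}Z_{\delta,r})$ via Lemmas~\ref{163242_24Nov18}, \ref{143743_30Aug18}, \ref{143849_2Oct18}, \ref{123549_17Dec18}, \ref{121957_22Sep18}, and read off (1)--(3) from Lemmas~\ref{175903_15Sep18}, \ref{113332_24Nov18}, \ref{175851_15Sep18}. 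Your write-up is in fact slightly more explicit than the paper's (you spell out why the $Z_{\delta,r_{i}}$ are pairwise distinct/disjoint), but the strategy is identical.
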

\begin{proof}
 By Lemma~\ref{135426_16Aug18}, there is $\delta\in\Pic^{n}{(C)}$ such that $N(\delta)=\eta$ and $\mathcal{L}\cong\mathcal{L}_{\delta}$.
 By Lemma~\ref{135618_16Aug18}, there is $s\in\Pic^{0}{(E)}$ such that $D=D_{\delta+\phi^{*}s}$.
 By the proof of Lemma~\ref{142326_6Oct18}, $D\notin\Pi_{\mathcal{L}}$ implies $s\notin{J(E)_{4}}\setminus{J(E)_{2}}$.
 By Lemma~\ref{163242_24Nov18}, the Gauss map $\Psi_{D}\vert_{U_{D}}:U_{D}\rightarrow\mathbf{P}^{n-1}$ is identified with $\Psi^{B}_{J(C),\delta+\phi^{*}s}\vert_{U_{D}}:U_{D}\rightarrow\mathbf{P}(H^{0}(E,\eta)^{\vee})$.
 Since
 $N(\delta+\phi^{*}s)-\eta
 =2s\notin{J(E)_{2}}\setminus\{0\}$,
 by Lemma~\ref{123549_17Dec18} and Lemma~\ref{121957_22Sep18}, the normalization of $X'_{\delta+\phi^{*}s}={X'_{D}}$ is given by $\nu_{\delta+\phi^{*}s}:X_{\delta+\phi^{*}s}\rightarrow{X'_{\delta+\phi^{*}s}}$, and by Lemma~\ref{143743_30Aug18} and Lemma~\ref{143849_2Oct18}, $\psi_{D}:U_{D}\rightarrow{X_{D}}$ is identified with $\psi^{\circ}_{\delta+\phi^{*}s}:Y^{\circ}_{\delta+\phi^{*}s}\rightarrow{X_{\delta+\phi^{*}s}}$.
 Hence the statements (1), (2) and (3) are consequence of Lemma~\ref{175903_15Sep18}, Lemma~\ref{113332_24Nov18} and Lemma~\ref{175851_15Sep18}.
\end{proof}
We define the subset $\Pi'_{\mathcal{L}}$ in the linear pencil $|\mathcal{L}|$ by
$$
\Pi'_{\mathcal{L}}=\{D\in|\mathcal{L}|\setminus\Pi_{\mathcal{L}}\mid\text{
$\nu_{D}^{*}{H_{D,i}}-Z_{D,i}$ is irreducible for $1\leq{i}\leq{2n}$}\}.
$$
\begin{lemma}\label{134511_15Dec18}
 $\sharp\Pi'_{\mathcal{L}}=4$.
\end{lemma}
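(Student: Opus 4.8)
The plan is to parametrise $|\mathcal{L}|$ as in Section~\ref{154705_28Oct18} and then extract the irreducibility condition from Corollary~\ref{143241_6Oct18}. By Lemma~\ref{135426_16Aug18} I fix $\delta\in\Pic^{n}{(C)}$ with $N(\delta)=\eta$ and $\mathcal{L}\cong\mathcal{L}_{\delta}$; then by Lemma~\ref{135618_16Aug18} every member of $|\mathcal{L}|$ equals $D_{\delta+\phi^{*}s}$ for some $s\in\Pic^{0}{(E)}$, and $s\mapsto{D_{\delta+\phi^{*}s}}$ is a double covering $\Pic^{0}{(E)}\rightarrow|\mathcal{L}|$. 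Since $N\phi^{*}$ is multiplication by $2$ on $\Pic^{0}{(E)}$, one has $N(\delta+\phi^{*}s)=\eta+2s$; applying Lemma~\ref{184538_11Aug18} with the class $\delta+\phi^{*}s$ in place of $\delta$ then shows that the fibre of $s\mapsto{D_{\delta+\phi^{*}s}}$ through $s$ is $\{s,-s\}$. Thus this covering is the quotient of $\Pic^{0}{(E)}$ by inversion, with the $4$ points of $J(E)_{2}$ as ramification points. From the proof of Lemma~\ref{142326_6Oct18}, the condition $D_{\delta+\phi^{*}s}\notin\Pi_{\mathcal{L}}$ amounts to $s\notin{J(E)_{4}\setminus{J(E)_{2}}}$.

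Next, for a member $D=D_{\delta+\phi^{*}s}$ with $s\notin{J(E)_{4}\setminus{J(E)_{2}}}$, I use the identifications established in the proof of Proposition~\ref{174153_24Nov18}: the normalization $\nu_{D}:X_{D}\rightarrow{X'_{D}}$ is $\nu_{\delta+\phi^{*}s}$, the morphism $\psi_{D}$ is $\psi^{\circ}_{\delta+\phi^{*}s}$, and by Lemma~\ref{113332_24Nov18} the canonical decomposition $Z_{D}=\bigcup_{i=1}^{2n}Z_{D,i}$ matches $\{Z_{\delta+\phi^{*}s,r}\mid{r\in\Ram{(\phi)}}\}$, where $\sharp\Ram{(\phi)}=2n$ and the branch points $\phi(r)$ are pairwise distinct because each ramification point $r$ is fixed by $\sigma$. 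Writing $r_{1},\dots,r_{2n}$ for the ramification points so that $Z_{D,i}$ corresponds to $Z_{\delta+\phi^{*}s,r_{i}}$, Lemma~\ref{175851_15Sep18} identifies the hyperplane $H_{D,i}$ attached to $Z_{D,i}$ with the hyperplane $H_{r_{i}}$; consequently $\nu_{D}^{*}H_{D,i}-Z_{D,i}$ becomes $\nu_{\delta+\phi^{*}s}^{*}H_{r_{i}}-Z_{\delta+\phi^{*}s,r_{i}}$ under these identifications.

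Now Corollary~\ref{143241_6Oct18} says that $\nu_{\delta+\phi^{*}s}^{*}H_{r_{i}}-Z_{\delta+\phi^{*}s,r_{i}}$ is irreducible if and only if $N(\delta+\phi^{*}s)=\eta$. As this condition does not involve $r_{i}$, it holds for a single $i$ if and only if it holds for all $1\leq{i}\leq{2n}$; and since $N(\delta+\phi^{*}s)=\eta+2s$, it is equivalent to $s\in{J(E)_{2}}$. Any such $s$ lies in $J(E)_{2}$ and hence outside $J(E)_{4}\setminus{J(E)_{2}}$, so $D_{\delta+\phi^{*}s}$ genuinely lies in $|\mathcal{L}|\setminus\Pi_{\mathcal{L}}$. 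Therefore $\Pi'_{\mathcal{L}}=\{D_{\delta+\phi^{*}s}\mid{s\in{J(E)_{2}}}\}$, i.e.\ $\Pi'_{\mathcal{L}}$ is exactly the branch locus of the double covering $\Pic^{0}{(E)}\rightarrow|\mathcal{L}|$. Since $D_{\delta+\phi^{*}s}=D_{\delta+\phi^{*}s'}$ forces $s'=\pm{s}$, which equals $s$ when $s$ is $2$-torsion, the assignment $s\mapsto{D_{\delta+\phi^{*}s}}$ is injective on $J(E)_{2}$, so $\sharp\Pi'_{\mathcal{L}}=\sharp{J(E)_{2}}=4$.

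The only delicate point is the bookkeeping in the second paragraph: one must verify that the identifications from the proof of Proposition~\ref{174153_24Nov18} are compatible with pull-back of the hyperplanes $H_{D,i}$, i.e.\ that the $i$-th member of the canonical decomposition of $Z_{D}$ corresponds to $Z_{\delta+\phi^{*}s,r_{i}}$ and, simultaneously, that $H_{D,i}$ corresponds to $H_{r_{i}}$ and $\nu_{D}$ to $\nu_{\delta+\phi^{*}s}$. This is immediate once one recalls that $H_{D,i}$ is characterised as the unique hyperplane containing $\nu_{D}(Z_{D,i})$ and that $H_{r}$ has the analogous characterisation (Lemma~\ref{175851_15Sep18}). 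Granting this, the statement is just Corollary~\ref{143241_6Oct18} together with the count of ramification points of $\Pic^{0}{(E)}\rightarrow|\mathcal{L}|$.
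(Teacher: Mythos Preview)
Your proof is correct and follows essentially the same route as the paper: parametrise $|\mathcal{L}|$ via Lemma~\ref{135426_16Aug18} and Lemma~\ref{135618_16Aug18}, invoke the identifications from the proof of Proposition~\ref{165452_24Nov18}, apply Corollary~\ref{143241_6Oct18} to reduce the irreducibility condition to $N(\delta+\phi^{*}s)=\eta$, i.e.\ $s\in J(E)_{2}$, and then count using Lemma~\ref{184538_11Aug18}. The paper's proof is a terse three lines; your version spells out the bookkeeping (matching $Z_{D,i}$ with $Z_{\delta+\phi^{*}s,r_{i}}$ and $H_{D,i}$ with $H_{r_{i}}$, and the injectivity of $s\mapsto D_{\delta+\phi^{*}s}$ on $J(E)_{2}$) that the paper leaves implicit, but there is no substantive difference in strategy.
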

\begin{proof}
 We use the same identification for Gauss maps as in the proof of Proposition~\ref{165452_24Nov18}.
 Then by Corollary~\ref{143241_6Oct18},
 $$
 D=D_{\delta+\phi^{*}s}\in\Pi'_{\mathcal{L}}
 \Longleftrightarrow
 N(\delta+\phi^{*}s)=\eta
 \Longleftrightarrow
 s\in{J(E)_{2}},
 $$
 and by Lemma~\ref{184538_11Aug18}, we have $\sharp\Pi'_{\mathcal{L}}=\sharp{J(C)_{2}}=4$.
\end{proof}
Let $e_{1}+\dots+e_{2n}$ be the branch divisor of the original covering $\phi:C\rightarrow{E}$, and let $\eta\in\Pic{(E)}$ be the invertible sheaf with $\phi^{*}\eta\cong\Omega_{C}^{1}$.
\begin{proposition}\label{173842_24Nov18}
 For any member $D\in\Pi'_{\mathcal{L}}$, there is an isomorphism
 $$
 (E,e_{1}+\dots+e_{2n},\eta)\cong
 ((X'_{D})^{\vee},H_{D,1}^{\vee}+\dots+H_{D,2n}^{\vee},\mathcal{O}_{(\mathbf{P}^{n-1})^{\vee}}(1)\vert_{(X'_{D})^{\vee}}),
 $$
 where $H_{D,i}^{\vee}\in(\mathbf{P}^{n-1})^{\vee}$ is the point corresponding to the hyperplane $H_{D,i}$, and $(X'_{D})^{\vee}\subset(\mathbf{P}^{n-1})^{\vee}$ is the dual variety of $X'_{D}\subset\mathbf{P}^{n-1}$.
\end{proposition}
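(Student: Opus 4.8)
The plan is to deduce this from the results of Sections~\ref{154705_28Oct18} and~\ref{154736_28Oct18} by transporting all the objects attached to $D$ to a representative $\delta$ with $N(\delta)=\eta$. By Lemma~\ref{135426_16Aug18} one can choose $\delta_{0}\in\Pic^{n}{(C)}$ with $N(\delta_{0})=\eta$ and $\mathcal{L}\cong\mathcal{L}_{\delta_{0}}$, and by Lemma~\ref{135618_16Aug18} write $D=D_{\delta_{0}+\phi^{*}s}$ for some $s\in\Pic^{0}{(E)}$; the argument in the proof of Lemma~\ref{134511_15Dec18} shows that $D\in\Pi'_{\mathcal{L}}$ forces $s\in{J(E)_{2}}$, so putting $\delta=\delta_{0}+\phi^{*}s$ we get $N(\delta)=\eta$, $D=D_{\delta}$ and $\mathcal{L}\cong\mathcal{L}_{\delta}$. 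Exactly as in the proof of Proposition~\ref{165452_24Nov18}, Lemmas~\ref{163242_24Nov18}, \ref{123549_17Dec18}, \ref{121957_22Sep18}, \ref{143743_30Aug18} and~\ref{143849_2Oct18} identify the Gauss datum $U_{D}\supset\Ram{(\psi_{D})}$, $\psi_{D}:U_{D}\rightarrow{X_{D}}$, $\nu_{D}:X_{D}\rightarrow{X'_{D}}$ attached to $D$ with $Y_{\delta}^{\circ}$, $\psi_{\delta}^{\circ}$, $\nu_{\delta}$, the ambient projective spaces being matched by the isomorphism $\pi\circ\iota:\mathbf{P}(H^{0}(E,\eta)^{\vee})\overset{\sim}{\rightarrow}\mathbf{P}^{n-1}$ of Lemma~\ref{163242_24Nov18}. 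Under this identification $Z_{D}=\overline{\psi_{\delta}^{\circ}(\Ram{(\psi_{\delta}^{\circ})})}=\bigcup_{r\in\Ram{(\phi)}}Z_{\delta,r}$ by Lemma~\ref{113332_24Nov18}; Lemma~\ref{175903_15Sep18} shows that for $n\geq4$ each $Z_{\delta,r}$ is irreducible and for $n=3$ each is a $J(X_{\delta})_{2}$-orbit, and since the branch points $e_{i}=\phi(r_{i})$ are pairwise distinct the $Z_{\delta,r}$, $r\in\Ram{(\phi)}=\{r_{1},\dots,r_{2n}\}$, are pairwise distinct and are exactly the canonical pieces $Z_{D,1},\dots,Z_{D,2n}$ of Proposition~\ref{165452_24Nov18}. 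Labelling so that $Z_{D,i}$ corresponds to $Z_{\delta,r_{i}}$, the uniqueness in Proposition~\ref{165452_24Nov18}~(3) together with Lemma~\ref{175851_15Sep18} identifies $H_{D,i}$ with the hyperplane $H_{r_{i}}\subset\mathbf{P}(H^{0}(E,\eta)^{\vee})$ attached to the subspace $H^{0}(E,\eta\otimes\mathcal{O}_{E}(-e_{i}))\subset H^{0}(E,\eta)$.

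With these identifications in hand, I would then compute the dual variety. Since $N(\delta)=\eta$, Lemma~\ref{143345_6Oct18} gives $(X'_{D})^{\vee}=(X'_{\delta})^{\vee}=\Phi_{|\eta|}(E)$ inside $(\mathbf{P}^{n-1})^{\vee}\cong(\mathbf{P}(H^{0}(E,\eta)^{\vee}))^{\vee}=\mathbf{P}(H^{0}(E,\eta))$. Because $\deg\eta=n\geq3$, the sheaf $\eta$ is very ample on $E$, so $\Phi_{|\eta|}:E\rightarrow\mathbf{P}(H^{0}(E,\eta))$ is a closed immersion with $\Phi_{|\eta|}^{*}\mathcal{O}_{\mathbf{P}(H^{0}(E,\eta))}(1)\cong\eta$; hence $\Phi_{|\eta|}$ is an isomorphism of $E$ onto $(X'_{D})^{\vee}$ carrying $\eta$ to $\mathcal{O}_{(\mathbf{P}^{n-1})^{\vee}}(1)\vert_{(X'_{D})^{\vee}}$. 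It then remains only to check that this isomorphism sends $e_{i}$ to $H_{D,i}^{\vee}$, which is pure projective duality: with the convention $\mathbf{P}(V)=\Grass(\dim V-1,V)$, the point of $\mathbf{P}(H^{0}(E,\eta))$ dual to the hyperplane $H_{r_{i}}\subset\mathbf{P}(H^{0}(E,\eta)^{\vee})$ is, by the description of $H_{r_{i}}$ in Lemma~\ref{175851_15Sep18}, the hyperplane $H^{0}(E,\eta\otimes\mathcal{O}_{E}(-e_{i}))\subset H^{0}(E,\eta)$, which is precisely $\Phi_{|\eta|}(e_{i})$, the space of sections of $\eta$ vanishing at $e_{i}$. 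Therefore $H_{D,i}^{\vee}=\Phi_{|\eta|}(e_{i})\in\Phi_{|\eta|}(E)=(X'_{D})^{\vee}$, and applying $\Phi_{|\eta|}^{-1}$ yields the asserted isomorphism of triples.

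The mathematical content is essentially all contained in Sections~\ref{154705_28Oct18} and~\ref{154736_28Oct18}, so the work here is mostly organisational, and I do not expect a serious obstacle. The two places I would be most careful about are: first, verifying that the matching between the canonical decomposition $Z_{D}=\bigcup_{i}Z_{D,i}$ of Proposition~\ref{165452_24Nov18} and the ramification points of $\phi$ is genuinely bijective, in particular that there are no coincidences among the $Z_{\delta,r}$ (for $n=3$ one compares $J(X_{\delta})_{2}$-orbits and uses that the $e_{i}$ are distinct); and second, fixing the projective-duality conventions so that $(\mathbf{P}^{n-1})^{\vee}$ is canonically $\mathbf{P}(H^{0}(E,\eta))$ under the running identification $\mathbf{P}^{n-1}\cong\mathbf{P}(H^{0}(E,\eta)^{\vee})$ and so that the hyperplane consisting of members of $|\eta|$ through $e_{i}$ corresponds to $\Phi_{|\eta|}(e_{i})$.
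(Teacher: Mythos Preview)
Your proposal is correct and follows essentially the same approach as the paper's proof: reduce to $D=D_{\delta}$ with $N(\delta)=\eta$ via the characterization of $\Pi'_{\mathcal{L}}$, invoke the identifications set up in the proof of Proposition~\ref{165452_24Nov18}, and then apply Lemma~\ref{143345_6Oct18} together with the description of $H_{r}$ in Lemma~\ref{175851_15Sep18} to conclude. Your write-up is simply more explicit than the paper's (which compresses the reduction step by citing Corollary~\ref{143241_6Oct18} and leaves the verification $H_{r}^{\vee}=\Phi_{|\eta|}(\phi(r))$ and the matching of $\eta$ with $\mathcal{O}(1)\vert_{(X'_{D})^{\vee}}$ to the reader).
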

\begin{proof}
 We use the same identification for Gauss maps as in the proof of Proposition~\ref{165452_24Nov18}.
 When $D\in\Pi'_{\mathcal{L}}$, we may assume that $D=D_{\delta}$ and $N(\delta)=\eta$ by Corollary~\ref{143241_6Oct18}.
 Then the point $H_{D,i}^{\vee}$ is identified with the point $H_{r}^{\vee}=\Phi_{|\eta|}(\phi(r))$ for $r\in\Ram{(\phi)}$, and $(X'_{D})^{\vee}$ is identified with $(X'_{\delta})^{\vee}$, which coincides with
$\Phi_{|\eta|}(E)\subset\mathbf{P}(H^{0}(E,\eta))$ by Lemma~\ref{143345_6Oct18}.
\end{proof}
\begin{remark}
 For a member $D\in\Pi'_{\mathcal{L}}$ the Gauss map $\Psi_{D}:D\setminus{D_{\mathrm{sing}}}\rightarrow\mathbf{P}^{n-1}$ is of degree $2^{n}$, and $X'_{D}+\sum_{i=1}^{2n}H_{D,i}$ is the branch divisor of $\Psi_{D}$.
 But for $D\notin\Pi'_{\mathcal{L}}$ the Gauss map $\Psi_{D}$ is not easy to compute.
\end{remark}
\subsection*{Acknowledgments}
The author would like to thank Juan Carlos Naranjo for the information on the recent paper \cite{NO}.

\end{document}